\begin{document}

\fontsize{12}{18}\selectfont

\renewcommand{\labelenumi}{(\alph{enumi})}

\newtheorem{thmint}{Theorem}
\newtheorem{lemma}{Lemma}[section]
\newtheorem{theorem}[lemma]{Theorem}
\newtheorem*{thm}{Theorem}
\newtheorem{corollary}[lemma]{Corollary}
\newtheorem{fact}[lemma]{Fact}
\newtheorem{proposition}[lemma]{Proposition}
\theoremstyle{remark}
\newtheorem{claim}[lemma]{Claim}
\newtheorem{remark}[lemma]{Remark}
\newtheorem{question}[lemma]{Question}
\newtheorem{conjecture}[lemma]{Conjecture}
\newtheorem*{rem}{remark}
\newtheorem{example}[lemma]{remark}
\newtheorem{idea}[lemma]{Idea}
\theoremstyle{definition}
\newtheorem{condition}[lemma]{Condition}
\newtheorem{definition}[lemma]{Definition}

\newenvironment{label1}{
\begin{enumerate}
\renewcommand{\theenumi}{\arabic{section}.\arabic{lemma}\alph{enumi}}
\renewcommand{\labelenumi}{(\arabic{section}.\arabic{lemma}\alph{enumi})}
}{\end{enumerate}\renewcommand{\theenumi}{\alph{enumi}}}{}
\let\origwr=\wre
\def\wre{\mathop{\rm wr}\nolimits}
\renewcommand{\phi}{\varphi}
\newcommand{\Ind}{{\rm Ind}}
\def\res{{\rm res}}
\def\irr{{\rm irr}}
\def\normal{\triangleleft}
\newcommand{\gal}{\textnormal{Gal}}
\newcommand{\bfsigma}{\mbox{\boldmath$\sigma$}}
\def\ab{{\rm ab}}
\def\Fhat{{\hat F}}
\def\Fgag{{\bar{F}}}
\def\Ehat{{\hat E}}
\def\Egag{{\bar{E}}}
\def\Egal{{\tilde{E}}}
\def\calE{\mathcal{E}}
\def\calS{\mathcal{S}}
\def\Lgal{{\tilde{L}}}
\newcommand{\bbA}{\mathbb{A}}
\newcommand{\bbF}{\mathbb{F}}
\newcommand{\bbQ}{\mathbb{Q}}
\newcommand{\bbZ}{\mathbb{Z}}
\newcommand{\bbZgal}{{\tilde \bbZ}}
\newcommand{\bbQgal}{{\tilde \bbQ}}
\newcommand{\Kgal}{{\tilde K}}
\def\mugag{{\bar \mu}}
\def\alphahat{{\hat \alpha}}
\def\phihat{{\hat \varphi}}
\def\Phihat{{\hat \Phi}}
\def\thetahat{{\hat \theta}}
\newcommand{\bfa}{\mbox{{\rm \textbf{a}}}}
\newcommand{\bft}{\mbox{{\rm \textbf{t}}}}
\newcommand{\bfu}{\mbox{{\rm \textbf{u}}}}
\newcommand{\bfT}{\mbox{{\rm \textbf{T}}}}
\newcommand{\bfX}{\mbox{{\rm \textbf{X}}}}
\CompileMatrices 
\title[On PAC extensions]{On pseudo algebraically closed extensions of fields}%
\author{Lior Bary-Soroker}%
\address{The Raymond and Beverly Sackler school of mathematical sciences, Tel-Aviv University}%
\email{barylior@post.tau.ac.il}%

\subjclass[2000]{12E30, 12F10}%
\keywords{Field Arithmetic, PAC, embedding problem}%

\date{\today}%
\thanks{This work is a part of the author's PhD thesis done at Tel Aviv University under the supervision of Prof.\ Dan Haran }
\thanks{
This research was partially supported by the ISRAEL SCIENCE FOUNDATION (Grant No.
343/07).}%
\begin{abstract}
The notion of `Pseudo Algebraically Closed (PAC) extensions' is a
generalization of the classical notion of PAC fields.
In this work we develop a basic machinery to study PAC extensions.
This machinery is based on a generalization of embedding problems to
field extensions. The main goal is to prove that the Galois closure
of any proper separable algebraic PAC extension is its separable
closure.
As a result we get a classification of all finite PAC extensions
which in turn proves the `bottom conjecture' for finitely generated
infinite fields.

The secondary goal of this work is to unify proofs of known results
about PAC extensions and to establish new basic properties of PAC
extensions, e.g.\ transitiveness of PAC extensions.
\end{abstract}
\maketitle

\section{Introduction and Results}
This work concerns Pseudo Algebraically Closed (PAC) extensions of
fields, and especially the Galois structure of PAC extensions. We
start by a short survey on this notion and its importance. Then we
discuss the results and methods of this work.

\subsection{Pseudo Algebraically Closed Extensions}
A field $K$ is called \textbf{PAC} if it has the following geometric
feature: $V(K)\neq \emptyset$ for any non-void absolutely
irreducible variety $V$ which is defined over $K$. In
\cite{JardenRazon1994} Jarden and Razon generalize this classical
notion to a field $K$ and a subset $K_0$ (in this work $K_0$ will
always be a field, unless otherwise stated): We call $K/K_0$
\textbf{PAC extension} (or just say that $K/K_0$ is PAC) if for
every absolutely irreducible variety $V$ of dimension $r\geq 1$
which is defined over $K$ and every separable dominating rational
map $\nu \colon V \to \bbA^r$ defined over $K$ there exists $a\in
V(K)$ for which $\nu(a) \in K_0^r$. (See
Proposition~\ref{prop:DefinitionPACextension} for equivalent
definitions of PAC extensions in terms of polynomials and of
places.) For example, every separably closed field is PAC over any infinite subfield, and for every PAC field K, the trivial extension K/K is PAC.

The original motivation of Jarden and Razon for this definition is
related to a generalization of Hilbert's 10th problem to `large'
algebraic rings. The problem asks whether there exists an algorithm
that determines whether a system of polynomial equations over $\bbZ$
has a solution in $\bbZ$. Matijasevich gave a negative answer to
that problem relying on the works of Davis, Putnam, and J.~Robinson
since the 1930s (see \cite{Matijasevich1993}).

A natural generalization of Hilbert's 10th problem is to consider
solutions in other rings. In \cite{Rumely1986} Rumely establishes a
local-global principle for the ring $\bbZgal$ of all algebraic
integers, and deduces from it a positive answer of Hilbert's 10th
problem for that ring. (It is interesting to mention that for the
ring $\bbQ$ the problem is still open.)

In \cite{JardenRazon1998} Jarden and Razon extend Rumely's local
global principle to the ring of integers $R$ of an algebraic field
$K$, for `almost all' $K\subseteq \tilde \bbQ$ whose absolute Galois
group $\gal(K)$ is finitely generated. Their key idea is to use the
fact (proven by them in \cite{JardenRazon1994}) that these fields
are PAC over $\bbZ$, and hence over $R$. Then, following Rumely,
they deduce a positive answer to Hilbert's 10th problem for $R$.

In \cite{JardenRazon_ms} Jarden and Razon continue their work, and
establish Rumely's local-global principle for smaller rings. They
also deal with the positive characteristic case, and strengthen the
local-global principle itself.

The applications of PAC extensions are not restricted to Hilbert's
10th problem and Rumely's local-global principle, and indeed some
other applications recently appeared in the literature. Some
examples are: (1) new constructions of Hilbertian domains
\cite{Razon1997}; (2) an analog of Dirichlet's theorem on primes in
arithmetical progressions for a polynomial ring in one variable over
some infinite fields \cite{Bary-SorokerDirichlet}; (3) the study of
the question: When is a non-degenerate quadratic form isomorphic to
a scaled trace form? \cite{Bary-SorokerKelmer}.

\subsection{The Galois Closure of PAC Extensions}
In \cite{JardenRazon1994} (where PAC extensions first appear) Jarden
and Razon find some Galois extensions $K$ of $\bbQ$ such that $K$ is
PAC as a field but $K$ is a PAC extension of no number field. (For
this a heavy tool is used, namely Faltings' theorem.) Then they
ask whether this is a coincidence or a general phenomenon.

In \cite{Jarden2006} Jarden settles this question by showing that
the only Galois PAC extension of an arbitrary number field is its
algebraic closure. Jarden does not use Faltings' theorem, but
different results. Namely, Razon's splitting theorem (see
Theorem~\ref{thm:intSplitting}), Frobenius' density theorem,
Neukirch's characterization of $p$-adically closed fields among all
algebraic extensions of $\bbQ$, and also the special property of
$\bbQ$ that it has no proper subfields(!). For that reason Jarden's
method is restricted to number fields.

The next step is to consider a finitely generated infinite field
$K_0$. Elaborating the original method of Jarden-Razon, i.e.\ using
Faltings' theorem (and the Grauert-Manin theorem in positive
characteristic), Jarden and the author generalize Jarden's result to
$K_0$ \cite{Bary-SorokerJarden}.

In this work we further generalize this theorem to the most general
case, where $K_0$ is an arbitrary field. Namely we prove that the
only Galois PAC extensions are the trivial ones (see
Theorem~\ref{thm:intmineGalois} below). Our proof is based on the
\emph{lifting property} which will be discussed later in the
introduction and the realization of wreath products in fields. Thus
it uses no special features of finitely generated fields, and for
that reason it applies to any $K/K_0$.

\begin{thmint}\label{thm:intmineGalois}
Let $K/K_0$ be a proper separable algebraic PAC extension. Then the
Galois closure of $K/K_0$ is the separable closure of $K_0$. \\
In particular, if $K/K_0$ is a Galois PAC extension, then either
$K=K_0$ or $K=K_s$.
\end{thmint}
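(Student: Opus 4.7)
\emph{Plan.} My plan is to argue by contradiction, combining the two ingredients advertised in the introduction: the lifting property for PAC extensions and the realization of wreath products as Galois groups. Suppose that $K/K_0$ is a proper separable algebraic PAC extension whose Galois closure $\hat K$ satisfies $\hat K\subsetneq K_s$. Set $G=\gal(K_s/K_0)$, $H=\gal(K_s/K)$, and $N=\gal(K_s/\hat K)=\bigcap_{g\in G} gHg^{-1}$. The assumptions $K\neq K_0$ and $\hat K\neq K_s$ translate into $H\neq G$ and $N\neq 1$, while $N$ is, by the definition of Galois closure, the maximal normal subgroup of $G$ contained in $H$.

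The first step is a routine finite-group reduction: pick $1\neq \sigma\in N$ and pass to a finite Galois extension $M/K_0$ inside $K_s$ on which the image of $\sigma$ is still nontrivial and on which $H$ restricts to a proper subgroup. Writing $\bar G=\gal(M/K_0)$, $\bar H=H|_M$, and $\bar N=N|_M$, we obtain $\bar N\neq 1$, $\bar H\neq \bar G$, and $\bar N=\bigcap_{g\in\bar G}g\bar Hg^{-1}$. This reduces the problem to a purely finite piece of group-field data attached to $M$, $K\cdot M$, and $K_0$.

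The main step is to build a finite embedding problem over $K_0$ whose resolution through $K$ yields a contradiction. The natural candidate uses the coset representation $\bar G\to \mathrm{Sym}(\bar G/\bar H)$ together with a wreath product $A \wr \bar G$ (or $A \wr (\bar G/\bar N)$) for a well-chosen nontrivial group $A$. The wreath-product realization machinery provides such an embedding problem over $K_0$; the crucial feature is that, because $\bar H$ stabilizes a marked coset in $\bar G/\bar H$, the wreath-product embedding problem is \emph{weakly} solvable via $K$, where the weak solution is produced out of the PAC-adapted notion of embedding problem developed earlier in the paper -- invoking the definition of a PAC extension in terms of separable rational maps $\nu\colon V\to \mathbb{A}^r$ with $\nu(a)\in K_0^r$.

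By the lifting property for PAC extensions, a weak solution of an embedding problem for $\gal(K_0)$ that is induced by the PAC extension $K/K_0$ lifts to a genuine solution over $K_0$. This produces a Galois extension of $K_0$ inside $K_s$ whose group realizes the wreath product; tracking how the point-stabilizer in the wreath product interacts with $\bar H$ would then force a conjugate-closed subgroup of $\bar G$ strictly larger than $\bar N$ to lie inside $\bar H$, contradicting the maximality of $\bar N$. The technical heart, which I expect to be the main obstacle, is the middle step -- choosing the wreath-product embedding problem so that its weak solvability through $K$ is automatic from PAC-ness while its full solution over $K_0$ genuinely refines the core of $\bar H$ in $\bar G$. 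Once the correct embedding problem is set up, the lifting step and the final contradiction should follow essentially formally from the machinery developed in the earlier sections.
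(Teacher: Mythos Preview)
Your proposal captures the right two ingredients --- the lifting property and wreath products --- but it has two genuine gaps, and the paper's argument is structured quite differently from what you outline.

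\textbf{First gap: the lifting property does not do what you claim.} You write that ``a weak solution of an embedding problem for $\gal(K_0)$ \ldots\ lifts to a genuine solution over $K_0$,'' and that this ``produces a Galois extension of $K_0$ inside $K_s$ whose group realizes the wreath product.'' That is not what the lifting property says. The lifting property (Proposition~\ref{prop:ExtensionSoltoGeoSol_DEP}) takes a weak solution $\theta$ of the \emph{lower} embedding problem (the one for $K$) and extends it to a \emph{weak} solution $(\theta,\theta_0)$ of the double embedding problem; the resulting $\theta_0\colon\gal(K_0)\to A\wr G$ is in general not surjective. So you do not realize the wreath product over $K_0$, only some subgroup of it. Your contradiction mechanism (``force a conjugate-closed subgroup strictly larger than $\bar N$ inside $\bar H$'') is predicated on having the full wreath product, and in any case is left too vague to evaluate.

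\textbf{Second gap: the wreath-product step needs normality of $\gal(K)$ in $\gal(K_0)$.} The paper's contradiction comes from Lemma~\ref{lem:criterionforpropernesswreath}: if $\theta_0$ is any weak solution of $(\nu\colon\Gamma\to G,\ \alpha\colon A\wr G\to G)$, then no nontrivial subgroup of the base factor $A^{1}$ is normal in $\theta_0(\Gamma)$. To use this one needs $\theta_0(\gal(K))$ to be (i) contained in $A^{1}$ and (ii) normal in $\theta_0(\gal(K_0))$. Point (ii) requires $\gal(K)\lhd\gal(K_0)$, i.e.\ $K/K_0$ Galois. Your direct attack on the non-Galois case never confronts this; working with the core $N$ does not help, because the field whose Galois group is $N$ is $\hat K$, and it is $K/K_0$, not $\hat K/K_0$, that is assumed PAC.

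\textbf{How the paper actually proceeds.} The proof is in two steps. Step~A treats the Galois case directly: pick a finite Galois $K_0\subsetneq L\subseteq K$ with $G=\gal(L/K_0)$, pick any finite Galois $N/K$ with group $B$, embed $B\hookrightarrow A=S_n$, and form the rational double embedding problem with upper group $A\wr G$ and lower group $A^{1}$. The restriction $\theta\colon\gal(K)\to B\leq A^{1}$ is a weak solution of the lower problem; the lifting property yields $\theta_0$; normality of $\gal(K)$ in $\gal(K_0)$ gives $B\lhd\theta_0(\gal(K_0))$; Lemma~\ref{lem:criterionforpropernesswreath} forces $B=1$. Step~B reduces the general case to Step~A, not by a group-theoretic trick with the core, but by Razon's splitting theorem (Theorem~\ref{thm:intSplitting}): one finds $M_0/K_0$ linearly disjoint from $K$ with $M=\hat K=M_0K$, so that $M/M_0$ is a proper Galois PAC extension, and then Step~A applies to $M/M_0$. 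Your outline omits this reduction and does not supply a substitute for the normality it provides.
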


It is important to note in this stage, that there are a lot of PAC extensions, which are not Galois. 
Let $e$ be a positive integer. If $K_0$ is a countable Hilbertian
field, then for almost all $\bfsigma = (\sigma_1, \ldots,
\sigma_e)\in \gal(K_0)^e$ (in the sense of the Haar measure)
\[
K_{0s}(\bfsigma) = \{ x\in K_{0s}\mid \sigma_i(x) = x, \ \forall
i\}
\]
is a PAC extension of $K_0$ \cite{JardenRazon1994}, and hence of any
subfield of $K_{0s}(\bfsigma)/K_0$. Using this result, in \cite{Bary-SorokerDirichlet,Bary-SorokerKelmer} many PAC extensions are constructed. For example if $K_0$ is a pro-solvable extension of a countable Hilbertian field, then there exists a PAC extension $K/K_0$ such that the order of $\gal(K)$ (as a supernatural number) is $\prod_{p}p^\infty$. In a sequel \cite{Bary-SorokerProjectivePairs}, the author studies the group theoretic properties of the pair of profinite groups $\gal(K) \to \gal(K_0)$ and using the transitivity of PAC extensions appear here constructs new PAC extensions. For example, for any projective profinite group $P$ of rank at most countable, $\bbQ_{ab}$ has a PAC extension with absolute Galois group $P$. It is open whether a finitely generated field $K_0$ has a PAC extension whose absolute Galois group is not finitely generated \cite{Bary-SorokerJarden}, Conjecture~7. 

Note that Theorem~\ref{thm:intmineGalois} generalizes the following

\begin{thmint}[Chatzidakis {\cite{FriedJarden1986}, Theorem~24.53}]
\label{thm:intChatzidakis} Let $K_0$ be a countable Hilbertian
field. Then for almost all $\bfsigma \in \gal(K_0)^e$ the field
$K_{0s} (\bfsigma)$ is Galois over no proper subextension of
$K_{0s}(\bfsigma)/K_0$.
\end{thmint}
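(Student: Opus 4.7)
The plan is to derive Theorem~\ref{thm:intChatzidakis} directly from Theorem~\ref{thm:intmineGalois} together with the Jarden-Razon fact, quoted in the excerpt, that for almost all $\bfsigma \in \gal(K_0)^e$ the fixed field $K_{0s}(\bfsigma)$ is a PAC extension of $K_0$. The guiding idea is: if $K_{0s}(\bfsigma)$ were Galois over some proper intermediate field $F$, then $K_{0s}(\bfsigma)/F$ would itself be a proper Galois PAC extension (PAC because $F\supseteq K_0$), and the "in particular" clause of Theorem~\ref{thm:intmineGalois} would force $K_{0s}(\bfsigma)=F_s=K_{0s}$; one only needs to rule out this residual possibility by discarding one extra null set.

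Concretely, I would set up two measure-zero sets in $\gal(K_0)^e$. Let $A$ be the set of $\bfsigma$ for which $K_{0s}(\bfsigma)/K_0$ fails to be PAC; by the cited Jarden-Razon theorem this has Haar measure zero. Let $B=\{(1,\ldots,1)\}$. Since $K_0$ is Hilbertian it is not separably closed and in fact has infinitely many separable extensions, so $\gal(K_0)$ is an infinite profinite group on which singletons carry measure zero; hence $B$ has measure zero as well. Fix any $\bfsigma$ outside $A\cup B$: for such $\bfsigma$, $K_{0s}(\bfsigma)\neq K_{0s}$ (since $\bfsigma\neq(1,\ldots,1)$) and $K_{0s}(\bfsigma)/K_0$ is PAC, from which $K_{0s}(\bfsigma)/F$ is PAC for every intermediate field $F$.

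Now the main step. Suppose toward a contradiction that $K_{0s}(\bfsigma)/F$ is Galois for some $F$ with $K_0\subseteq F\subsetneq K_{0s}(\bfsigma)$. This extension is separable and algebraic (it lies inside $K_{0s}$), proper by the choice of $F$, and PAC by the previous paragraph. The "in particular" part of Theorem~\ref{thm:intmineGalois} then yields $K_{0s}(\bfsigma)=F_s$. But $K_0\subseteq F\subseteq K_{0s}$ gives $F_s=K_{0s}$, which contradicts $K_{0s}(\bfsigma)\neq K_{0s}$. Hence no such $F$ exists.

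I anticipate no real obstacle: once Theorem~\ref{thm:intmineGalois} and the Jarden-Razon randomness result are granted, Chatzidakis' theorem is essentially an immediate corollary. The only point to watch is the bookkeeping with the measure-zero sets; in particular one must remember to exclude the single point where all $\sigma_i$ are trivial, since without it the case $K=K_s$ in Theorem~\ref{thm:intmineGalois} would not be excluded and the contradiction at the end of the argument would not go through.
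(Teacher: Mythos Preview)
Your proposal is correct and is precisely the argument the paper has in mind: the paper does not give a separate proof of Theorem~\ref{thm:intChatzidakis} but simply notes that Theorem~\ref{thm:intmineGalois} generalizes it, via the quoted Jarden--Razon result that $K_{0s}(\bfsigma)/K_0$ (and hence $K_{0s}(\bfsigma)/F$ for any intermediate $F$) is PAC for almost all $\bfsigma$. Your explicit handling of the null sets, including the exclusion of $\bfsigma=(1,\ldots,1)$ to rule out $K_{0s}(\bfsigma)=K_{0s}$, is exactly the bookkeeping needed to turn that remark into a formal proof.
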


We shall also prove that if $K_0$ is a finitely generated infinite
field and $e\geq 1$ an integer, then for almost all $\bfsigma\in
\gal(K_0)^e$ the field $K_{0s}(\bfsigma)$ is a Galois extension of
no proper subfield $K$ (i.e.\ we remove the restriction $K_0
\subseteq K$).

It is interesting to note that if $M$ is a minimal Henselian field,
then $M$ is a Galois extension of no proper subfield. This result
follows from a theorem of Schimdt, that was generalized by Engler,
and later was reproved by Jarden, see \cite{Engler} .

\subsection{Finite PAC Extensions and the `Bottom Conjecture'}
We classify all finite PAC extensions:
\begin{thmint}\label{thm:finitePACext}
Let $K/K_0$ be a finite extension. Then $K/K_0$ is PAC if and only
if one of the following holds.
\begin{enumerate}
\item
$K_0$ is a PAC field and $K/K_0$ is purely inseparable.
\item
$K_0$ is real closed and $K$ is its algebraic closure.
\end{enumerate}
\end{thmint}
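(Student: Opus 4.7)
Suppose $K/K_0$ is a finite PAC extension, and let $L = K \cap K_{0s}$, so that $L/K_0$ is finite separable and $K/L$ is finite purely inseparable. The first step is a fiber-descent argument, applicable equally to absolutely irreducible varieties $W$ over $L$ or over $K_0$: given $W$ of dimension $r\geq 1$ with a separable dominating morphism $\nu\colon W \to \bbA^r$, restrict $\nu$ to its \'etale locus, base change to $K$, and apply PAC of $K/K_0$ to obtain $a \in W(K)$ with $\nu(a) \in K_0^r$; the fiber $\nu^{-1}(\nu(a))$ is then \'etale over the base, so $a$ is defined over a finite separable sub-extension of $K/L$ (resp.\ of $K/K_0$), and pure inseparability collapses this sub-extension to $L$ (resp.\ $K_0$). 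This shows that $L/K_0$ is itself PAC, and that $K_0$ is a PAC field whenever $L = K_0$. In the latter subcase, conclusion~(1) holds. Otherwise $L/K_0$ is a proper separable algebraic PAC extension, so Theorem~\ref{thm:intmineGalois} identifies its Galois closure with $K_{0s}$; finiteness of $L/K_0$ forces $K_{0s}/K_0$ to be finite, whence by Artin--Schreier $K_0$ is real closed of characteristic zero, with $[K_{0s}:K_0] = 2$. Then $L = K_{0s}$, and the purely inseparable extension $K/L$ over the algebraically closed $L$ is trivial, giving $K = L = \tilde K_0$, which is conclusion~(2).

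\textbf{Sufficiency.} In case~(2), given $V/K$ absolutely irreducible of dimension $r \geq 1$ with separable dominating $\nu\colon V \to \bbA^r$, the image of $\nu$ contains a non-empty Zariski open $U \subseteq \bbA^r$; any nonzero polynomial $f \in K[T]$ cutting out $\bbA^r\setminus U$ can be written as $g + ih$ with $g,h \in K_0[T]$ not both zero (since $K = K_0(i)$), and infinitude of $K_0$ produces $a \in K_0^r$ with $f(a) \neq 0$, hence $a \in U(K_0)$; since $K$ is algebraically closed, the fiber $\nu^{-1}(a)$ has a $K$-point. For case~(1), with $K_0$ PAC and $K/K_0$ purely inseparable, given an absolutely irreducible $f \in K[T,X]$ separable in $X$, I would use a Weil-restriction construction: fix a $K_0$-basis $\eta_1,\dots,\eta_n$ of $K$, substitute $X = \sum_j b_j \eta_j$ with new variables $b_j$, expand $f(T, \sum_j b_j \eta_j) = \sum_j h_j(T, b)\,\eta_j$ with $h_j \in K_0[T,b_1,\dots,b_n]$, and apply PAC of $K_0$ to the common zero locus of $h_1,\dots,h_n$ in $\bbA^{r+n}_{K_0}$; a $K_0$-rational point $(a, b_1, \ldots, b_n)$ reassembles into $(a, \sum_j b_j\eta_j) \in K_0^r \times K$ satisfying $f = 0$.

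\textbf{Main obstacle.} The principal difficulty is the sufficiency in case~(1): verifying that the Weil-restriction variety is absolutely irreducible of dimension $\geq 1$, so that PAC of $K_0$ applies. The tempting shortcut of replacing $f$ by the Frobenius twist $f^{(q)}$ (raising coefficients to the $q$-th power, with $q = p^m$ chosen so that $K^q \subseteq K_0$) produces an absolutely irreducible polynomial over $K_0$ and a $K_0$-point $(a_0, b_0)$ solving $f^{(q)} = 0$, but this only amounts to $f(a_0^{1/q}, b_0^{1/q}) = 0$ in the perfect closure of $K_0$, and generically neither $a_0^{1/q}$ lies in $K_0$ nor $b_0^{1/q}$ in $K$. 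The Weil-restriction approach avoids this obstruction at the cost of a genuine absolute-irreducibility argument; alternatively one may appeal to a preliminary lemma of the paper establishing directly that any purely inseparable algebraic extension of a PAC field is itself a PAC extension of that field.
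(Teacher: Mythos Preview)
Your necessity argument matches the paper's: both pass to the maximal separable subextension (your $L$, the paper's $K_1$), show it is PAC over $K_0$, and then split on whether $L = K_0$ (yielding conclusion~(1)) or $L \neq K_0$, in which case Theorem~\ref{thm:intmineGalois} plus Artin--Schreier forces $K_0$ real closed, characteristic zero, and $K = \tilde K_0$. Where the paper simply cites \cite{JardenRazon1994}, Corollary~2.3, for the descent of the PAC property from $K/K_0$ to $L/K_0$, you supply the \'etale-fiber argument directly; this is correct and is essentially the content of that corollary (and of Lemma~4.1 in the paper).

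For sufficiency the paper does strictly less than you attempt: it quotes the general fact that an algebraically closed field is PAC over any infinite subset for case~(2), and again cites \cite{JardenRazon1994}, Corollary~2.3, for case~(1). Your explicit argument in case~(2) is correct. In case~(1) the obstacle you flag is genuine: since $K/K_0$ is purely inseparable, $K\otimes_{K_0}\bar K_0$ is a non-reduced local Artinian ring, so the base change of your scheme $V(h_1,\ldots,h_n)$ to $\bar K_0$ is not a bundle over the original curve and absolute irreducibility does not follow formally. The paper makes no attempt at this and simply invokes the Jarden--Razon result, exactly the alternative you name at the end.
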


Let $K_0$ be a countable Hilbertian field. Similarly to
Theorem~\ref{thm:intmineGalois}, Theorem~\ref{thm:finitePACext} can
be applied to the field $K_{0s}(\bfsigma)$, for almost all $\bfsigma
\in \gal(K_0)^e$. This reproves the `bottom theorem' in the
countable case:

\begin{thmint}[Haran {\cite{FriedJarden2005}, Theorem~18.7.7}]
\label{thm:intbottom} Let $K_0$ be a Hilbertian field and $e\geq 1$
an integer. Then for almost all $\bfsigma \in \gal(K)^e$, 
$K_{0s}(\bfsigma)$ is a finite extension of no proper subfield that
contains $K_0$.
\end{thmint}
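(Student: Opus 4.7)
The plan is to combine the Jarden--Razon construction of PAC extensions (quoted just after Theorem~\ref{thm:intmineGalois}) with the classification in Theorem~\ref{thm:finitePACext}. Because Jarden--Razon's result requires $K_0$ countable, this strategy reproves Theorem~\ref{thm:intbottom} only in the countable case, as the author announces.

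By the Jarden--Razon theorem, for almost all $\bfsigma \in \gal(K_0)^e$ the extension $K_{0s}(\bfsigma)/K$ is PAC for every intermediate field $K_0 \subseteq K \subseteq K_{0s}(\bfsigma)$. Fix such a generic $\bfsigma$ and suppose, towards a contradiction, that there exists a proper subfield $K$ with $K_0 \subseteq K \subsetneq K_{0s}(\bfsigma)$ over which $K_{0s}(\bfsigma)$ is finite. Since $K_{0s}(\bfsigma) \subseteq K_{0s}$, every element of $K_{0s}(\bfsigma)$ is separable over $K_0$ and a fortiori over $K$, so $K_{0s}(\bfsigma)/K$ is a proper, finite, separable PAC extension. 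Applying Theorem~\ref{thm:finitePACext}, case (a) is excluded automatically (a simultaneously separable and purely inseparable extension is trivial), so we must be in case (b): $K$ is real closed and $K_{0s}(\bfsigma)$ is its algebraic closure.

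In particular $K_{0s}(\bfsigma)$ is algebraically closed, so it contains $\tilde K_0 \supseteq K_{0s}$; combined with $K_{0s}(\bfsigma) \subseteq K_{0s}$ this forces $K_{0s}(\bfsigma) = K_{0s}$, equivalently $\sigma_1 = \cdots = \sigma_e = 1$. Since $K_0$ is Hilbertian it is neither separably nor algebraically closed, hence $\gal(K_0)$ is infinite and the singleton $\{(1,\ldots,1)\}$ has Haar measure zero. Discarding this null set together with the null set on which the Jarden--Razon conclusion fails yields the theorem.

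The genuine mathematical content is packed into Theorem~\ref{thm:finitePACext}; once that classification is in hand the main obstacle evaporates and only a clean measure-theoretic argument remains. The one delicate observation is that being contained in $K_{0s}$ automatically kills case (a) of the classification, so the real closed case is the only threat one actually has to rule out, and it collapses to the harmless singleton $\bfsigma = (1,\ldots,1)$.
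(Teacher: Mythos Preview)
Your argument is correct and follows exactly the route the paper sketches: apply the Jarden--Razon result to get the PAC property of $K_{0s}(\bfsigma)$ over every intermediate field, then invoke Theorem~\ref{thm:finitePACext} to rule out proper finite subextensions, with the separability coming for free since everything sits inside $K_{0s}$. The only additional remark worth making is that case~(b) already forces characteristic~$0$ (real closed fields are ordered), which is why your containment chain $\tilde K_0 \subseteq K_{0s}(\bfsigma) \subseteq K_{0s}$ collapses cleanly.
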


Moreover, as in the case of Galois extensions, we strengthen this
result and prove the `bottom conjecture' \cite{FriedJarden2005},
Problem~18.7.8 for finitely generated fields. For the precise
formulation see Conjecture~\ref{conj:bottom}.

It is interesting to note that the theorems of Schmidt, Engler, and
Jarden discussed before, also imply that a minimal Henselian field
is a finite extension of no proper subfield.

\subsection{Double Embedding Problems}
Before continuing with results, we wish to briefly discuss the
methods.

Arithmetic and geometric properties of a field $K$ relate to
properties of the absolute Galois group $\gal(K)$ of $K$. This group
is equipped with the Krull topology, which makes it a profinite
group. A fundamental tool in the study of profinite groups is the
notion of finite embedding problems.

Here is a nice example of the above relation. A profinite group is
the absolute Galois group of some PAC field if and only if all
finite embedding problems for this group are weakly solvable. See
Ax' theorem \cite{FriedJarden2005}, Theorem 11.6.2, and
Lubotzky-v.d.~Dries' theorem \cite{FriedJarden2005},
Corollary~23.1.2.

First we generalize the notion of an embedding problem for a field
$K$ to a \emph{double embedding problem} for an extension $K/K_0$.
If the former is defined w.r.t.\ $\gal(K)$, then the later is
defined w.r.t.\ the restriction map $\gal(K)\to \gal(K_0)$. Roughly
speaking, a double embedding problem for $K/K_0$ consists on two
embedding problems, the `lower' for $K$ and the `upper' for $K_0$,
which are compatible w.r.t.\ the restriction map.

We characterize PAC extensions in terms of special solutions of
finite double embedding problems -- \emph{geometric solutions}.
Those are weak solutions that are induced by some rational point of
a variety, or in a different terminology, by a place of a finitely
generated regular extension (see
Section~\ref{sec:geometricsolution}).

The key property that PAC extensions satisfy is the \emph{lifting
property} (Proposition~\ref{prop:ExtensionSoltoGeoSol_DEP}). This
lifting property asserts that any weak solution of the lower
embedding problem can be extended to a geometric solution of the
double embedding problem, provided some rationality assumption holds.

\subsection{Transitiveness}
We prove that the PAC property of algebraic extensions is
transitive.

\begin{thmint}\label{thm:transitivity}
Let $K_0\subseteq K_1 \subseteq K_2$ be a tower of separable
algebraic extensions. If both $K_2/K_1$ and $K_1/K_0$ are PAC
extensions, then so is $K_2/K_0$.
\end{thmint}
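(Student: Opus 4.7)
My plan is to use the place-theoretic characterization of PAC extensions from Proposition~\ref{prop:DefinitionPACextension}: $K/K_0$ is PAC iff every finitely generated regular extension $F/K$ with separating transcendence basis $\mathbf{x}=(x_1,\ldots,x_r)$ admits a $K$-place $\phi\colon F\to K\cup\{\infty\}$ with $\phi(\mathbf{x})\in K_0^r$. Given a finitely generated regular $F/K_2$ with separating transcendence basis $\mathbf{x}$, the aim is to construct a $K_2$-place of $F$ with $\phi(\mathbf{x})\in K_0^r$.

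The naive sequential strategy --- apply PAC of $K_1/K_0$ to the purely transcendental $E=K_1(\mathbf{x})/K_1$ to obtain a $K_1$-place with $\mathbf{x}\mapsto\mathbf{a}\in K_0^r$, extend canonically to a $K_2$-place of $K_2(\mathbf{x})$ via the maximal ideal $(x_1-a_1,\ldots,x_r-a_r)\subseteq K_2[\mathbf{x}]$, and then extend to $F$ --- breaks at the final step. Since $F/K_2(\mathbf{x})$ is a nontrivial finite separable extension, a generic extension of the place to $F$ has residue field a proper algebraic extension of $K_2$, not $K_2$ itself. For the extension to be a $K_2$-place of $F$, the specialization of the minimal polynomial of $F/K_2(\mathbf{x})$ at $\mathbf{x}=\mathbf{a}$ must have a root in $K_2$: precisely the sort of rationality that PAC of $K_2/K_1$ delivers, except that it only yields $\mathbf{x}\mapsto K_1^r$, not $K_0^r$. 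So the two PAC invocations must be interleaved rather than cascaded.

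To interleave them I would use Weil restriction. First reduce to the case where the defining data of $F/K_2$ lie in a finite subextension $K_2'$ of $K_2/K_1$ (enlarging $K_2'$, if necessary, to contain a fixed $K_2$-point produced by applying PAC of $K_2/K_1$ to $V_{K_2}$). Let $V$ be an affine model of $F/K_2'$ with separable dominating $\nu\colon V\to\bbA^r_{K_2'}$, and form $W=\mathrm{Res}_{K_2'/K_1}(V)$ with induced $\mathrm{Res}(\nu)\colon W\to\bbA^{r[K_2':K_1]}_{K_1}$. The closed subvariety $W'\subseteq W$ cut out by the diagonal condition ``$\mathrm{Res}(\nu)(a)\in K_1^r\subseteq (K_2')^r$'' has dimension $r$, a separable dominating projection to $\bbA^r_{K_1}$, and $W'(K_1)=\{a\in V(K_2')\colon\nu(a)\in K_1^r\}$, nonempty by the earlier PAC-of-$K_2/K_1$ step. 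Applying PAC of $K_1/K_0$ to a suitable absolutely irreducible $K_1$-component of $W'$ then yields $a\in V(K_2')\subseteq V(K_2)$ with $\nu(a)\in K_0^r$.

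The main obstacle is isolating an absolutely irreducible $K_1$-component of $W'$ whose projection to $\bbA^r_{K_1}$ remains separable and dominating: over $\bar K_1$, $W'$ is the fibre product $\prod_\sigma V^\sigma$ over $\bbA^r_{\bar K_1}$ indexed by $K_1$-embeddings $\sigma\colon K_2'\hookrightarrow\bar K_1$, which typically decomposes according to the combined Galois action of $\gal(K_2'/K_1)$ and $\gal(\hat F/K_2'(\mathbf{x}))$ (for $\hat F$ the Galois closure of $F$ over $K_2'(\mathbf{x})$). I would handle this by passing to $\hat F$ and identifying a $K_1$-defined component via explicit Galois-theoretic tracking of both actions. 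A cleaner alternative, more in the paper's spirit, is to recast the entire argument in the framework of double embedding problems and the lifting property of Proposition~\ref{prop:ExtensionSoltoGeoSol_DEP}: transitivity follows by composing the lifting property across the tower, with PAC of $K_1/K_0$ producing a weak solution to the $K_1/K_0$ part of any double embedding problem for $K_2/K_0$, which PAC of $K_2/K_1$ then lifts to a geometric solution.
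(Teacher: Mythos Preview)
Your final paragraph lands on exactly the framework the paper uses---rational double embedding problems together with the lifting property of Proposition~\ref{prop:ExtensionSoltoGeoSol_DEP}---but you have the two PAC applications in the wrong order, and the reversal is not cosmetic. The lifting property takes a weak solution of the \emph{lower} embedding problem of a rational DEP and extends it upward; it does not take a solution of the upper part and push it down. So if you first invoke PAC of $K_1/K_0$ to produce a weak solution $(\theta_1,\theta_0)$ of the $K_1/K_0$ DEP, the lifting property for $K_2/K_1$ cannot act on $\theta_1$: that map is a solution of the \emph{upper} embedding problem of the $K_2/K_1$ DEP, whereas the lifting property needs a solution of the $K_2$-level problem $\gal(K_2)\to\gal(F_2/E)$ as input. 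The paper's argument runs the other way: first use PAC of $K_2/K_1$ (via Proposition~\ref{prop:ExistGeoSol}) to get a geometric weak solution $(\Phi_2^*,\Phi_1^*)$ of the $K_2/K_1$ DEP; this produces $\Phi_1^*$ as a weak solution of the $K_1$-level embedding problem, which is now the \emph{lower} problem of the $K_1/K_0$ DEP; then the lifting property for $K_1/K_0$ extends $\Phi_1^*$ to $(\Phi_1^*,\Phi_0^*)$ while keeping $\Phi_1^*$ fixed. Since $\Phi_0^*|_{\gal(K_2)}=\Phi_1^*|_{\gal(K_2)}=\Phi_2^*$, the pair $(\Phi_2^*,\Phi_0^*)$ solves the original $K_2/K_0$ DEP. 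The crucial feature is that the lifting property \emph{preserves} the given lower solution, which is exactly what lets the two steps glue.

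Your Weil restriction outline is a legitimate geometric avatar of the same idea, and the obstacle you flag---isolating an absolutely irreducible $K_1$-component of $W'=\prod_\sigma V^\sigma\times_{\bbA^r}\cdots$ with separable dominant projection---is real and is precisely what the DEP machinery handles for free. The field $\Ehat$ of Proposition~\ref{prop: characterization of solutions} is, in function-field language, the component of that fibre product singled out by a chosen weak solution $\theta$; the lifting property then says a $K$-rational point of $\Ehat$ exists with the right residue over $K_0$. So your ``explicit Galois-theoretic tracking'' would, once carried out, reinvent Proposition~\ref{prop: characterization of solutions}. Either route works, but the paper's formulation avoids the component bookkeeping entirely.
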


This fundamental property easily follows from the lifting property.
To the best of our knowledge it did not appear in the literature
before.  Moreover, this result together with an analog of the
Ax-Lubotzky-v.d.~Dries result described above leads to a new
construction of PAC extensions. This will be dealt with in \cite{Bary-SorokerProjectivePairs}.

\subsection{Descent of Galois Groups}
In \cite{Razon2000} Razon proves for a PAC extension $K/K_0$ that
every separable extension $L/K$ descends to a separable extension
$L_0/K_0$:

\begin{thmint}[Razon]\label{thm:intSplitting}
Let $K/K_0$ be a PAC extension and $L/K$ a separable algebraic
extension. Then there exists a separable algebraic extension
$L_0/K_0$ that is linearly disjoint from $K$ over $K_0$ such that
$L=L_0 K$.
\end{thmint}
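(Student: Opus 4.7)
I would prove this by reducing to the case of finite separable $L/K$ and in that case using the PAC extension property to produce a primitive element $\beta$ of $L/K$ whose minimal polynomial over $K$ already lies in $K_0[Y]$. Given such $\beta$, the descent $L_0 := K_0(\beta)$ satisfies $[L_0:K_0] = [K(\beta):K] = [L:K]$, which yields both $L_0 K = L$ and linear disjointness of $L_0$ and $K$ over $K_0$; the separability of $L_0/K_0$ is inherited from that of the polynomial.

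For the finite case, let $\alpha \in L$ be a primitive element and set $n = [L:K]$. For indeterminates $c = (c_0, \ldots, c_{n-1})$, consider $\beta(c) := \sum_{i=0}^{n-1} c_i \alpha^i$ and let $e_0(c), \ldots, e_{n-1}(c) \in K[c]$ be the coefficients of the characteristic polynomial
\[
\chi_{\beta(c)}(Y) = Y^n + e_{n-1}(c) Y^{n-1} + \cdots + e_0(c)
\]
of multiplication by $\beta(c)$ on the $K$-vector space $L$. Let $V \subseteq \bbA^n_K$ be the nonempty Zariski-open locus on which $\beta(c)$ is a primitive element of $L/K$ (it contains $c = (0,1,0,\ldots,0)$), and define $\nu \colon V \to \bbA^n_K$ by $\nu(c) := (e_0(c), \ldots, e_{n-1}(c))$. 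Then $V$ is absolutely irreducible of dimension $n$. Over $\bar K$, the change of coordinates $c \mapsto (\sigma_1(\beta(c)), \ldots, \sigma_n(\beta(c)))$, where $\sigma_1, \ldots, \sigma_n$ are the $K$-embeddings $L \hookrightarrow \bar K$, is a linear isomorphism because the Vandermonde matrix in the (separably) distinct $\sigma_i(\alpha)$ is invertible; afterwards $\nu$ becomes the elementary symmetric function map, étale of degree $n!$ outside the discriminant locus. Hence $\nu$ is generically étale, so dominating and separable. The PAC property now yields $c \in V(K)$ with $\nu(c) \in K_0^n$, and by primitivity of $\beta := \beta(c)$ the minimal polynomial of $\beta$ over $K$ coincides with $\chi_\beta$ and therefore lies in $K_0[Y]$.

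For general separable algebraic $L/K$, I would apply Zorn's lemma to the collection of subfields $L_0 \subseteq L$ with $L_0/K_0$ separable algebraic and linearly disjoint from $K$ over $K_0$. A maximal element $L_0^*$ must satisfy $L_0^* K = L$: otherwise, picking $\alpha \in L \setminus L_0^* K$, one applies the finite case to the extension $L_0^* K(\alpha)/L_0^* K$---legitimate because $L_0^* K / L_0^*$ is itself PAC, by the standard base-change stability of PAC extensions under linearly disjoint algebraic base change---to produce a strictly larger valid $L_0$, contradicting maximality.

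The principal technical obstacle is verifying separability of $\nu$ in positive characteristic, which is secured via the Vandermonde argument by the separability hypothesis on $L/K$. A secondary point is the base-change stability of PAC extensions invoked in the reduction to the finite case, which is a known but nontrivial stability property that must be cited or established en route.
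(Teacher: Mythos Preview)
Your argument is correct, but it follows a different route from the paper's. The paper derives Theorem~\ref{thm:intSplitting} as a corollary of the more general descent result Theorem~\ref{thm:intmineDescent}: one takes the Galois closure $M$ of $L/K$, embeds $G=\gal(M/K)$ into the symmetric group $S_\Sigma$ acting on $\Sigma=G/\gal(M/L)$, invokes Theorem~\ref{thm:intmineDescent} (hence the lifting property) with $G_0=S_\Sigma$ to obtain a Galois extension $M_0/K_0$ with $M_0K=M$, and then recovers $L_0$ as the fixed field of a point-stabilizer in $\gal(M_0/K_0)$. Your approach instead applies the defining property of a PAC extension directly to the characteristic-polynomial map $\nu\colon V\to\bbA^n$ and extracts a primitive element $\beta$ of $L/K$ whose minimal polynomial already lies in $K_0[Y]$. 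This is essentially Razon's original argument, which the paper explicitly describes as the special case of its method corresponding to the regular realization of $S_n$ by the generic polynomial $X^n+T_1X^{n-1}+\cdots+T_n$; your Vandermonde computation is precisely what verifies separability of that realization. The paper's route buys a uniform descent statement (Theorem~\ref{thm:intmineDescent}) and places the result inside the double-embedding-problem framework; your route is more elementary and self-contained, avoiding the lifting machinery entirely. Both handle the infinite case by the same Zorn argument, and the base-change stability you invoke is exactly \cite{JardenRazon1994}, Lemma~2.1, also used in the paper.
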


We generalize Razon's result and get the following stronger descent
result.
\begin{thmint}\label{thm:intmineDescent}
Let $K/K_0$ be a PAC extension and $L/K$ a finite Galois extension.
Assume $\gal(L/K) \leq G_0$, where $G_0$ is regular over $K_0$. Then
there exists a Galois extension $L_0/K_0$ such that
$\gal(L_0/K_0)\leq G_0$ and $L = L_0 K$.
\end{thmint}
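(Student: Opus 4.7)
The plan is to encode the desired extension $L_0/K_0$ as the upper solution of a finite double embedding problem for $K/K_0$ whose lower row records the given $L/K$, and to extract it by invoking the lifting property (Proposition~\ref{prop:ExtensionSoltoGeoSol_DEP}).

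First, I would unpack the hypothesis that $G_0$ is regular over $K_0$: by definition, $G_0$ is realized as the Galois group of a Galois extension $F/K_0(\mathbf{T})$ that is regular over $K_0$. This rationality datum is precisely the one that a geometric solution will hinge on, since geometric solutions arise from $K_0$-rational specializations of such regular function field extensions.

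Next, I would form the double embedding problem whose \emph{upper} row is the trivial quotient $\gal(K_0)\twoheadrightarrow 1$ together with the surjection $G_0\twoheadrightarrow 1$, and whose \emph{lower} row is the canonical surjection $\gal(K)\twoheadrightarrow \gal(L/K)$ together with the prescribed inclusion $\gal(L/K)\leq G_0$. The lower row is already weakly solved, by the composite $\gal(K)\to \gal(L/K)\hookrightarrow G_0$, and compatibility with the restriction map $\gal(K)\to \gal(K_0)$ is automatic because the upper target is trivial.

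Now I would apply the lifting property. Since $K/K_0$ is PAC, and since the upper embedding problem is equipped with the required rationality datum (namely the regular realization $F/K_0(\mathbf{T})$ of $G_0$), the weak solution on the lower row extends to a geometric solution of the whole double embedding problem. By the definition of a geometric solution, such a solution produces a Galois extension $L_0/K_0$ whose Galois group embeds into $G_0$ as the upper component, and whose compositum with $K$ realizes the prescribed lower solution, i.e., $L_0K=L$. This gives exactly the conclusion of the theorem.

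The main obstacle I expect is not the strategy but the bookkeeping: one has to check that the double embedding problem formed above fits the formalism developed in Section~\ref{sec:geometricsolution}, and that a ``geometric solution'' there packages precisely the conjunction $\gal(L_0/K_0)\leq G_0$ and $L_0K=L$. Once these definitional checks are carried out, the theorem reduces directly to the lifting property. In particular, the special case $G_0=\gal(L/K)$ recovers Razon's splitting theorem (Theorem~\ref{thm:intSplitting}): the group inclusion $\gal(L_0/K_0)\leq\gal(L/K)$ combined with $L_0K=L$ forces, by degree considerations, $L_0$ and $K$ to be linearly disjoint over $K_0$ with $\gal(L_0/K_0)=\gal(L/K)$.
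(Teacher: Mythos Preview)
Your proposal is correct and follows essentially the same route as the paper: form the rational double embedding problem with trivial targets, take the restriction $\gal(K)\to\gal(L/K)=G\hookrightarrow G_0$ as the given weak solution of the lower problem, apply the lifting property to obtain $\theta_0\colon\gal(K_0)\to G_0$, and let $L_0$ be the fixed field of $\ker(\theta_0)$; the identity $\theta_0|_{\gal(K)}=\theta$ then forces $L_0K=L$.

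One correction to your closing remark: the special case $G_0=\gal(L/K)$ does \emph{not} directly recover Razon's theorem (Theorem~\ref{thm:intSplitting}), since that would require $\gal(L/K)$ itself to be regular over $K_0$, and moreover Razon's statement concerns arbitrary separable (not Galois) $L/K$. The paper's deduction instead passes to the Galois closure $M/K$, embeds $\gal(M/K)$ in a symmetric group $S_n$ (which is always regular over $K_0$), applies Theorem~\ref{thm:intmineDescent} with $G_0=S_n$, and then cuts out $L_0$ as the fixed field of an appropriate point stabilizer.
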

(Here $G_0$ is regular over $K_0$ if there exists a Galois extension
$F_0/K_0(t)$ with Galois group isomorphic to $G_0$ and such that
$F_0$ is regular over $K_0$.)

Razon's theorem follows from Theorem~\ref{thm:intmineDescent}
applied to the group $G_0=S_n$ (for full details see the proof in
Section~\ref{sec:descent}). Note that the original approach of Razon
to Theorem~\ref{thm:intSplitting} is similar to our proof but very
specific: One only considers the regular realization of $G_0=S_n$
generated by the generic polynomial $f(T_1,\ldots,T_n,X) = X^n + T_1
X^{n-1} + \cdots + T_n$.

Let us explain the name `descent' attached to
Theorem~\ref{thm:intmineDescent}. If a finite Galois group
$G=\gal(L/K)$ over $K$ is regular over $K_0$, then, by taking $H=G$
in Theorem~\ref{thm:intmineDescent}, we get that $G$ occurs over
$K_0$ (since $G=\gal(L_0/K_0)$ in that case). Thus $G$
\emph{descends} to a Galois group over $K_0$.

As a consequence of this and of the fact that finite abelian groups
are regular over any field, we get, for example, that
\[K^{\ab} =K K_0^{\ab}.\]
Here the superscript `ab' denotes the maximal abelian extension.

\subsection{Acknowledgments}
The author is indebted to D.~Haran and M.~Jarden for many
discussions and suggestions concerning this work. The author thanks
A.~Fehm and E.~Paran for valuable remarks on an earlier version of
this paper, C.~Meiri for his suggestion that simplified the diagram of a double embedding problem and finally the anonymous referee for his helpful comments.

\section{Preliminaries}
In this section we define the notion of geometric solutions of an
embedding problem and set up the notation and the necessary
background needed for this work.
\subsection{Embedding Problems}
Let $K$ be a field. Then $K_s$ denotes its separable closure and
$\Kgal$ its algebraic closure. The absolute Galois group of $K$ is
denoted by $\gal(K)$, i.e.\ $\gal(K) = \gal(K_s/K) = {\rm
Aut}(\Kgal/K)$. Recall that an \textbf{embedding problem} for
$\gal(K)$ (or equivalently for $K$) is a diagram
\begin{equation}\label{eq:EP}
\xymatrix{%
    &\gal(K)\ar[d]^{\mu}\ar@{.>}[dl]_{\exists\theta?}\\
G\ar[r]^\alpha
    &A,
}%
\end{equation}
where $G$ and $A$ are profinite groups and $\mu$ and $\alpha$ are
(continuous) epimorphisms. In short we write $(\mu,\alpha)$ for
\eqref{eq:EP}.

A \textbf{solution} of $(\mu,\alpha)$ is an epimorphism
$\theta\colon \gal(K)\to G$ such that $\alpha\theta = \mu$. If
$\theta$ is a homomorphism that satisfies $\alpha\theta = \mu$ but
is not necessarily surjective, we say that $\theta$ is a
\textbf{weak solution}. In particular, a profinite group $G$ is a
quotient of $\gal(K)$ if and only if the embedding problem
$(\gal(K)\to 1, G\to 1)$ is solvable.

If $G$ is finite (resp.\ $\alpha$ group theoretically splits), we
say that the embedding problem is \textbf{finite} (resp.\
\textbf{split}).

The following lemma gives an obvious, but useful, criterion for a
weak solution to be a solution (i.e.\ surjective).

\begin{lemma}\label{lem:criterionforproperness}
A weak solution $\theta\colon \gal(K) \to G$ of an embedding problem
\eqref{eq:EP} is a solution if and only if $\ker(\alpha) \leq
\theta(\gal(K))$.
\end{lemma}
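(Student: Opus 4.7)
The plan is to verify both implications directly from the definitions, using the equation $\alpha \theta = \mu$ together with the surjectivity of $\mu$. This is a pure group-theoretic manipulation; no field theory enters.

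For the easy direction, I would simply observe that if $\theta$ is surjective then $\theta(\gal(K)) = G$, which trivially contains $\ker(\alpha)$.

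For the converse, assume $\ker(\alpha) \subseteq \theta(\gal(K))$ and fix an arbitrary $g \in G$. I would push $g$ down to $A$ via $\alpha$ and lift it back: since $\mu$ is surjective, there is some $\sigma \in \gal(K)$ with $\mu(\sigma) = \alpha(g)$. Because $\alpha\theta = \mu$, this means $\alpha(\theta(\sigma)) = \alpha(g)$, so the product $g\,\theta(\sigma)^{-1}$ lies in $\ker(\alpha)$, hence by hypothesis in $\theta(\gal(K))$. Writing $g\,\theta(\sigma)^{-1} = \theta(\tau)$ for some $\tau$ gives $g = \theta(\tau\sigma) \in \theta(\gal(K))$, so $\theta$ is surjective.

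There is no real obstacle here; the only thing to be slightly careful about is to use that $\mu$, and not $\alpha$, is the map one lifts along. Topologically, no continuity argument is needed because the statement is purely set-theoretic once the maps are given, and $\theta(\gal(K))$ is automatically a closed subgroup (being the continuous image of a compact group), but that observation is not needed for the equivalence itself.
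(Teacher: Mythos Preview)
Your proof is correct and matches the paper's own argument essentially line for line: both pick $g\in G$, lift $\alpha(g)$ along $\mu$ to some element of $\gal(K)$, and then use the hypothesis $\ker(\alpha)\leq\theta(\gal(K))$ to absorb the discrepancy. The paper writes the kernel element as $\theta(f)^{-1}g$ whereas you write $g\,\theta(\sigma)^{-1}$, but this is cosmetic.
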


\begin{proof} Suppose $\ker(\alpha)\leq \theta(\gal(K))$.
Let $g\in G$, put $a=\alpha(g)$, and let $f\in \mu^{-1} (a)$.
Then $\theta(f)^{-1} g \in \ker(\alpha)\leq \theta(\gal(K))$,
and hence $g\in \theta(\gal(K))$. The converse is immediate.
\end{proof}

Two embedding problems $(\mu\colon \gal(K)\to A, \alpha\colon
G\to A)$ and $(\nu \colon \gal(K) \to B, \beta \colon H\to B)$
are said to be \textbf{equivalent} if there exist isomorphisms
$i\colon G\to H$ and $j\colon A\to B$ for which the following
diagram commutes.
\[
\xymatrix{%
G\ar[r]^{\alpha} \ar[d]^i
    &A\ar[d]^j
        &\gal(K)\ar@{=}[d]\ar[l]_{\mu}\\
H\ar[r]^{\beta}
    &B
        &\gal(K)\ar[l]_{\nu}
}%
\]
It is evident that any (weak) solution of $(\mu,\alpha)$
corresponds to a (weak) solution of $(\nu,\beta)$ and vice
versa.

Denote by $L$ the fixed field of $\ker(\mu)$ in $K_s$. Then $\mu$
factors as $\mu = \mugag\mu_0$, where $\mu_0\colon \gal(K)\to
\gal(L/K)$ is the restriction map and $\mugag\colon \gal(L/K)\to A$
is an isomorphism. Then the embedding problems $(\mu,\alpha)$ and
$(\mu_0,\mugag^{-1}\alpha)$ are equivalent.
So, from now on, we shall assume that $A = \gal(L/K)$ and $\mu$ is
the restriction map (unless we explicitly specify differently).
\begin{equation}\label{EP:basic}
\xymatrix{%
    &\gal(K)\ar[d]^{\mu}\ar@{.>}[dl]_{\exists\theta?}\\
G\ar[r]^(0.35){\alpha}
    &\gal(L/K)
}%
\end{equation}
Let $\theta\colon \gal(K) \to G$ be a weak solution of
$(\mu,\alpha)$. The fixed field $F$ of $\ker(\theta)$ is called the
\textbf{solution field}. Then, if $\theta$ is a solution, the
embedding problems $(\mu,\alpha)$ and the embedding problem defined
by the restriction map, i.e.\ $(\mu, \res\colon \gal(F/K) \to
\gal(L/K))$, are equivalent.

\subsection{Geometric and Rational Embedding Problems}
We define two kinds of embedding problems for a field $K$ coming from
geometric objects.

\begin{definition}
Let $E$ be a finitely generated regular extension of $K$, let $F/E$
be a Galois extension, and let $L = F\cap K_s$, where $K_s$ is a
separable closure of $K$. Then the restriction map $\alpha \colon
\gal(F/E)\to \gal(L/K)$ is surjective, since $E\cap K_s = K$.
Therefore
\begin{equation}\label{eq:geometricep}
(\mu\colon \gal(K) \to \gal(L/K), \alpha\colon \gal(F/E)\to
\gal(L/K))
\end{equation}
is an embedding problem for $K$. We call such an embedding problem
\textbf{geometric embedding problem}.

If $E = K(\bft) = K(t_1,\ldots , t_e)$ is a field of rational
functions over $K$, then we call
\begin{equation}\label{eq:rationalep}
(\mu\colon \gal(K) \to \gal(L/K), \alpha\colon \gal(F/K(\bft))\to
\gal(L/K))
\end{equation}
\textbf{rational embedding problem}.
\end{definition}

We can consider only geometric embedding problems because of the
following lemma.

\begin{lemma}
Every finite embedding problem is equivalent to a geometric
embedding problem.
\end{lemma}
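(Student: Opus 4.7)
The plan is to assume, by the preceding reduction, that the given finite embedding problem already has the form $(\mu\colon \gal(K)\to \gal(L/K),\ \alpha\colon G\to \gal(L/K))$ with $L/K$ finite Galois, $\mu$ the restriction map, and $G$ a finite group. I will construct a finitely generated regular extension $E/K$ and a finite Galois extension $F/E$ together with an isomorphism $\gal(F/E)\cong G$ identifying $F\cap K_s$ with $L$ and the restriction map $\gal(F/E)\to\gal(L/K)$ with $\alpha$. The equivalence of the two embedding problems will then be built into the construction.

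For the construction I will twist the regular representation of $G$ by $\alpha$. Introduce a family $\{T_\sigma\}_{\sigma\in G}$ of indeterminates over $L$, set $F=L(T_\sigma:\sigma\in G)$, and define an action of $G$ on $F$ by
\[
\sigma\cdot a=\alpha(\sigma)(a)\ \text{for }a\in L,\qquad \sigma\cdot T_\tau=T_{\sigma\tau}.
\]
Because $\alpha$ is a group homomorphism and left translation is a $G$-action on $G$, these two pieces combine to a well-defined faithful action of $G$ by field automorphisms of $F$. Setting $E=F^G$, Artin's theorem gives a finite Galois extension $F/E$ with $\gal(F/E)$ canonically identified with $G$. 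Since $F$ is finitely generated over $K$ and $F/E$ is finite, $E$ is finitely generated over $K$ as well.

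What remains is to check that $E/K$ is regular and that $F\cap K_s=L$. The latter is immediate because $F/L$ is purely transcendental and $L\subseteq K_s$. For the former, any $x\in E\cap K_s$ lies in $F\cap K_s=L$ and is fixed by the $G$-action on $L$; since this action factors through the surjection $\alpha$ onto $\gal(L/K)$, we get $x\in L^{\gal(L/K)}=K$. Finally, by the very definition of the action, the restriction map $\gal(F/E)\to\gal(L/K)$ corresponds under our identification $\gal(F/E)=G$ to $\alpha$, so the geometric embedding problem attached to $F/E/K$ is equivalent to the original one. The only point requiring care is the setup of the combined $G$-action on constants and on indeterminates; once that is in place, all verifications are routine.
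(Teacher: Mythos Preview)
Your argument is correct and is essentially the construction carried out in \cite{FriedJarden2005}, Lemma~11.6.1, which the paper simply cites without further detail. One minor point: to conclude that $E/K$ is regular you verify only $E\cap K_s=K$; in positive characteristic you should also observe that $E/K$ is separable (clear, since $F/K$ is a tower of a finite Galois extension and a purely transcendental extension, hence separable, and subextensions of separable extensions are separable) and that the same computation with $\tilde K$ in place of $K_s$ gives $E\cap\tilde K=K$.
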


\begin{proof}
It follows from \cite{FriedJarden2005}, Lemma~11.6.1.
\end{proof}

The following lemma shows that one can replace the tuple $\bft$ with
a single transcendental element.

\begin{lemma}\label{lem:regularsolvablewithonet}
Let $K$ be an infinite field, $(t,\bft)$ an $(e+1)$-tuple of
variables, and \eqref{eq:rationalep} a rational embedding problem.
Then there exists a rational embedding problem
\[
(\mu_t \colon \gal(K) \to \gal(L/K), \alpha_t\colon \gal(F_t /K(t))
\to \gal(L/K))
\]
which is equivalent to \eqref{eq:rationalep}. Furthermore, there
exists a place $\phi$ of $F$ whose residue field is $F_t$ and such
that $\phi(t_i) = a_i + b_i t$, $a_i,b_i\in K$, and $b_i\neq 0$.
\end{lemma}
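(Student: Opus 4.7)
The plan is a two-step reduction from $e$ variables to one. Introducing fresh $e$-tuples of indeterminates $\mathbf{a} = (a_1,\ldots,a_e)$ and $\mathbf{b} = (b_1,\ldots,b_e)$, I would first lift $F/K(\bft)$ to a Galois extension over $K(\mathbf{a}, \mathbf{b}, t)$ via the substitution $t_i \mapsto a_i + b_i t$, and then specialize $(\mathbf{a},\mathbf{b})$ to a suitable $K$-rational point, which exists because $K$ is infinite.

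For the first step, define $\psi\colon K(\bft) \to K(\mathbf{a}, \mathbf{b}, t)$ by $t_i \mapsto s_i := a_i + b_i t$. The $s_i$ are algebraically independent over $K$: any polynomial relation among them would survive the specialization $\mathbf{b} \mapsto 0$, yielding a relation among the independent $a_i$. Hence $\psi$ is injective. Writing $a_i = s_i - b_i t$, one sees $K(\mathbf{a}, \mathbf{b}, t) = K(s_1,\ldots,s_e, b_1,\ldots,b_e, t)$, and a transcendence-degree count shows this field is purely transcendental of degree $e+1$ over $\psi(K(\bft))$. Since purely transcendental base change preserves irreducibility of polynomials (Gauss's lemma), the field $F_1 := F \otimes_{K(\bft)} K(\mathbf{a}, \mathbf{b}, t)$ is a Galois extension of $K(\mathbf{a}, \mathbf{b}, t)$ with Galois group canonically isomorphic to $\gal(F/K(\bft))$, compatibly with the restriction to $\gal(L/K)$; moreover $F_1 \cap K_s = L$.

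For the second step, fix a primitive element $\gamma$ of $F/K(\bft)$ with minimal polynomial $p(\bft,X) \in K[\bft][X]$. Then $h(\mathbf{a},\mathbf{b},t,X) := p(a_1+b_1 t,\ldots,a_e+b_e t, X) \in K[\mathbf{a},\mathbf{b},t,X]$ is irreducible over $K(\mathbf{a},\mathbf{b},t)$ by the previous step. The set of $(\mathbf{a}_0,\mathbf{b}_0) \in K^{2e}$ for which the specialized polynomial $h(\mathbf{a}_0,\mathbf{b}_0,t,X)$ remains irreducible over $K(t)$, has nonvanishing leading coefficient in $X$, has nonvanishing discriminant, and satisfies $b_{0i}\neq 0$ for all $i$, is the complement of a proper Zariski-closed subset of $\bbA^{2e}_K$. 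Since $K$ is infinite, such a $K$-rational point exists, and the associated specialization extends to a place $\phi$ of $F$ with $\phi(t_i) = a_{0i} + b_{0i} t$, whose residue field $F_t$ is Galois over $K(t)$ with Galois group isomorphic to $\gal(F/K(\bft))$ and $F_t \cap K_s = L$, giving the equivalence of embedding problems.

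The main obstacle is ensuring the specialization preserves irreducibility and the full Galois group. This would in general be a Hilbert-irreducibility issue, but the genericity achieved in the first step reduces it to avoiding a proper Zariski-closed subset of $\bbA^{2e}_K$, for which mere infinitude of $K$ (rather than Hilbertianity) suffices.
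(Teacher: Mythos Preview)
Your two-step strategy --- lift to generic $(\mathbf{a},\mathbf{b})$, then specialize --- is exactly the shape of the paper's argument, and your first step is fine. The gap is entirely in the second step, where you assert that the locus of $(\mathbf{a}_0,\mathbf{b}_0)$ for which $h(\mathbf{a}_0,\mathbf{b}_0,t,X)$ stays irreducible over $K(t)$ is the complement of a proper Zariski-closed set. Irreducibility over $K(t)$ is \emph{not} a Zariski-open condition on coefficients in general; that is precisely why Hilbert irreducibility is nontrivial. What \emph{is} Zariski-open (Bertini--Noether) is \emph{absolute} irreducibility. You never invoke the regularity of $F$ over $L$, and without it your argument does not go through.

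Worse, even when your listed conditions (irreducibility over $K(t)$, nonvanishing discriminant and leading coefficient, $b_{0i}\neq 0$) all hold, the conclusion $F_t\cap K_s = L$ can fail, so the resulting embedding problem need not be equivalent to the original one. Take $K=\bbQ$, $e=2$, $F=\bbQ(t_1,t_2,\sqrt{t_1t_2})$, so $L=\bbQ$ and $p(t_1,t_2,X)=X^2-t_1t_2$. Specialize $(a_1,a_2,b_1,b_2)=(0,0,1,2)$, i.e.\ $(t_1,t_2)\mapsto(t,2t)$. Then $h=X^2-2t^2$ is irreducible over $\bbQ(t)$, has nonzero discriminant $8t^2$, leading coefficient $1$, and $b_1,b_2\neq 0$; yet $F_t=\bbQ(t,\sqrt{2}\,t)=\bbQ(\sqrt2)(t)$, so $F_t\cap K_s=\bbQ(\sqrt2)\neq L$. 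The map $\alpha_t\colon\gal(F_t/K(t))\to\gal(L/K)$ is then not even defined with the correct target.

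The repair is to work over $L$ rather than $K$: since $F/L$ is regular, the minimal polynomial of a primitive element of $F/L(\bft)$ is absolutely irreducible, and Bertini--Noether makes its absolute irreducibility after the linear specialization a genuinely Zariski-open condition on $(\mathbf{a}_0,\mathbf{b}_0)$. This simultaneously guarantees that $F_t$ is regular over $L$ (hence $F_t\cap K_s=L$) and that $[F_t:L(t)]=[F:L(\bft)]$, from which $[F_t:K(t)]=[F:K(\bft)]$ and the equivalence of embedding problems follow. This is exactly what the paper does by applying the cited external lemma to $F/L(\bft)$ rather than to $F/K(\bft)$.
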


\begin{proof}
Let $a_i,b_i\in K$, $b_i\neq 0$ be as given in
\cite{Bary-SorokerCharacterization}, Lemma~4, for $F/L(\bft)$.
Extend the specialization $\bft\mapsto (a_1 + b_1 t, \ldots, a_e +
b_e t)$ to a place of $F$ trivial on $L$ and let $F_t$ be its
residue field. By \cite{Bary-SorokerCharacterization}, Lemma~4, it
follows that $F_t$ is regular over $L$ and that $[F_t:L(t)] =
[F:L(\bft)]$. Then $[F_t:K(t)] = [F:K(\bft)]$, and thus
$\gal(F/K(\bft)) \cong \gal(F_t/K(t))$.
\end{proof}

\subsection{Geometric Solutions}
\label{sec:geometricsolution}%
Consider a geometric embedding problem
$$
(\mu\colon \gal(K)\to \gal(L/K),\alpha\colon \gal(F/E) \to
\gal(L/K))
$$
for a field $K$. Let $\phi$ be a place of $E$ that is unramified in
$F$. We always assume that $\phi$ is trivial on $K$, i.e.\ $\phi(x)
= x$, for $x\in K$. We denote a residue field by bar, e.g., $\Egag$
is the residue field of $E$.

Assume $\phi$ is $K$-rational, i.e.\ $\Egag = K$, and extend it to a
place of  $F$, say $\Phi$. By composing  $\Phi$ with an appropriate
Galois automorphism, we can assume that $\Phi$ is trivial on $L$.

Then $\Phi/\phi$ canonically induces a weak solution $\Phi^*$ of
$(\mu,\alpha)$. The image of $\Phi^*$ is the decomposition group and
$\Phi(\Phi^*(\sigma)(x)) = \sigma (\Phi(x))$ for all $\sigma\in
\gal(K)$, provided $\Phi(x)$ is finite. If we choose a different
extension of $\phi$ to $F$, say $\Psi$, then $\Psi^*$ and $\Phi^*$
differ by an inner automorphism of $\gal(F/E)$. For complete details
see \cite{FriedJarden2005}, Lemma~6.1.4).

\begin{definition}
Let $\theta \colon \gal(K)\to \gal(F/E)$ be a weak solution of
$(\mu,\alpha)$. Then we call it \textbf{geometric} if there exists a
place $\Phi$ of $F$ unramified over $E$ such that $\Egag = K$ and
$\theta = \Phi^*$.
\end{definition}

If we have a commutative diagram 
\[
\xymatrix{
	&\gal(K)\ar[d]_{\mu_1} \ar@/^10pt/[dd]^{\mu_2}\\
H_1\ar@{->>}[r]^{\alpha_1}\ar@{->>}[d]^{\pi}
	&G_1\ar@{->>}[d]\\
H_2\ar@{->>}[r]^{\alpha_2}
	&G_2,
}
\]
then we say that $(\mu_1, \alpha_1)$ \textbf{dominates} $(\mu_2,\alpha_2)$. Note that any (weak) solution $\theta_1\colon \gal(K) \to H_1$ of $(\mu_1,\alpha_1)$ induces a (weak) solution $\theta_2 = \pi \theta_1$ of $(\mu_2,\alpha_2)$.

Geometric solutions are compatible with scalar extensions:
\begin{lemma}\label{lem:geometricdomination}
Consider a geometric embedding problem
$$
(\mu\colon \gal(K)\to \gal(L/K),\alpha\colon \gal(F/E) \to
\gal(L/K)).
$$
Let $M/K$ be a Galois extension with $L\subseteq M$. Then the
geometric embedding problem
\[
(\mu'\colon \gal(K) \to \gal(M/K), \alpha'\colon \gal(FM/E)\to
\gal(M/K)),
\]
where $\alpha'$ and $\mu'$ are the corresponding restriction maps,
dominates the embedding problem $(\mu,\alpha)$ with respect to the
restriction maps. Furthermore, if $\Psi^*$ is a geometric (weak)
solution of $(\mu',\alpha')$, then $(\Psi|_{F})^*$ is a geometric
(weak) solution of $(\mu,\alpha)$.
\end{lemma}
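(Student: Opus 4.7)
The plan is to verify the dominance claim first and then deduce the statement about geometric weak solutions from it. Checking that $(\mu',\alpha')$ is genuinely a geometric embedding problem reduces to the single point that $\alpha'$ is well-defined and surjective, which in turn is equivalent to $FM\cap K_s = M$. The argument I would use is: since $F/E$ is separable Galois and $E/K$ is regular (hence separable), $F/K$ is separable, so $L = F\cap K_s$ is algebraically closed in $F$ and thus $F/L$ is regular. As $L\subseteq M\subseteq K_s$ and $M/L$ is algebraic separable, the standard preservation of regularity under algebraic base change of the constant field yields $FM/M$ regular, whence $FM\cap K_s = M$. Once this is in hand, the dominance diagram is immediate: the vertical arrows are the obvious restriction maps $\gal(FM/E)\twoheadrightarrow \gal(F/E)$ and $\gal(M/K)\twoheadrightarrow \gal(L/K)$ (surjective since $F/E$ and $L/K$ are Galois), and both the square involving $\alpha,\alpha'$ and the triangle involving $\mu,\mu'$ commute because every map in sight is the restriction of an automorphism to a smaller Galois subextension.

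For the second assertion, let $\Psi$ be a place of $FM$ unramified over $E$ with $K$-rational residue on $E$ and trivial on $M$. Setting $\Phi := \Psi|_F$ produces a place of $F$ unramified over $E$, trivial on $L$ (since $L\subseteq M$) and with residue field $K$ on $E$, so $\Phi^*$ is a well-defined geometric weak solution of $(\mu,\alpha)$. I would then check $\Phi^* = \res\circ \Psi^*$ directly from the characterizing identity: for $\sigma\in \gal(K)$ and $x\in F$ with $\Psi(x)$ finite, the defining property $\Psi(\Psi^*(\sigma)(x)) = \sigma(\Psi(x))$ restricts to $\Phi(\Psi^*(\sigma)|_F(x)) = \sigma(\Phi(x))$, using that $F/E$ Galois forces $\Psi^*(\sigma)(F)\subseteq F$. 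This is precisely the characterizing property of $\Phi^*(\sigma)$, so $\Phi^* = \res\circ \Psi^*$. If in addition $\Psi^*$ is surjective, then so is $\Phi^*$, because the restriction $\gal(FM/E)\to \gal(F/E)$ is surjective.

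The one genuinely delicate step is the regularity identity $FM\cap K_s = M$, which really depends on the hypothesis $L\subseteq M$: one needs $F/L$ to be regular before base-changing to $M$, and the definitional fact $L = F\cap K_s$ together with separability of $F/K$ is exactly what provides this. Everything else is a bookkeeping exercise on commutative diagrams and on the defining property of the weak solution induced by a place.
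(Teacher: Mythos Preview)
Your proof is correct and follows essentially the same path as the paper's. The only packaging difference is that the paper compresses your regularity check and diagram verification into the single fiber product identity
\[
\gal(FM/E) \;\cong\; \gal(F/E)\times_{\gal(L/K)} \gal(M/K),
\]
from which the domination diagram and the surjectivity of the restriction maps are immediate; your argument unwinds exactly what is needed for this isomorphism (namely $FM\cap K_s=M$, equivalently $F/L$ regular). For the second assertion the paper says no more than you do: set $\Phi=\Psi|_F$, observe it is unramified over $E$, and note $\res_{FM,F}\circ\Psi^*=\Phi^*$.
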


\begin{proof}
As $E/K$ is regular, we have
\begin{eqnarray*}
\gal(FN/E) &=&\gal(F/E)\times_{\gal(LE/E)}\gal(ME/E) \\
&\cong & \gal(F/E)\times_{\gal(L/K)} \gal(M/K)
\end{eqnarray*}
and the projection maps coincide with the restriction maps. Thus
$(\mu',\alpha')$ dominates $(\mu,\alpha)$. Now let $\Psi^*$ be a
geometric (weak) solution of $(\mu',\alpha')$. For $\Phi =
\Psi|_{F}$, we have that $\Phi$ is unramified over $E$ and
$\res_{FM,F}\circ \Psi^* = \Phi^*$, as needed.
\end{proof}

\section{Pseudo Algebraically Closed Extensions and Double Embedding
Problems}

\subsection{Basic Properties}
 The following proposition gives several equivalent definitions of
PAC extensions in terms of polynomials and places, including a
reduction to plane curves. A proof of that proposition essentially
appears in \cite{JardenRazon1994}. Nevertheless, for the sake of
completeness, we give here a formal proof.

\begin{proposition}\label{prop:DefinitionPACextension}
The following conditions are equivalent for a field extension
$K/K_0$.
\begin{label1}
\item \label{con pac:DefinitionPACextension}%
$K/K_0$ is PAC.

\item \label{con poly r:DefinitionPACextension}%
For every $e\geq 1$ and every absolutely irreducible polynomial $f(\bfT,X)\in
K[T_1,\ldots, T_e,X]$ that is separable in $X$, and nonzero
$r(\bfT)\in K[\bfT]$ there exists $(\bfa,b)\in K_0^e\times K$ for
which $r(\bfa)\neq 0$ and $f(\bfa, b) = 0$.

\item \label{con poly:DefinitionPACextension}%
For every absolutely irreducible polynomial $f(T,X)\in K[T,X]$ that
is separable in $X$, and nonzero $r(T)\in K(T)$ there exists
$(a,b)\in K_0\times K$ for which $r(a)\neq 0$ and $f(a,b) = 0$.

\item\label{con a:DefinitionPACextension}
For every finitely generated regular extension $E/K$ with
separating transcendence basis $\bft = (t_1, \ldots , t_e)$ and
every nonzero $r(\bft)\in K(\bft)$, there exists a $K$-rational
place $\phi$ of $E$ unramified over $K(\bft)$ such that
$\overline{K_0(\bft)} = K_0$, $\bfa = \phi(\bft)$ is finite, and
$r(\bfa)\neq 0,\infty$.

\item\label{con b:DefinitionPACextension}%
For every finitely generated regular extension $E/K$ with
separating transcendence basis $t$ and every nonzero $r(t)\in K[t]$
there exists a $K$-rational place $\phi$ of $E$ unramified over
$K(t)$ such that $\overline{K_0(t)} = K_0$, $a = \phi(t)\neq
\infty$, and $r(a)\neq0,\infty$.
\end{label1}
\end{proposition}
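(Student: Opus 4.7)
The plan is to prove the equivalences via a cycle, for instance
\[
(a)\Rightarrow (d)\Rightarrow (b)\Rightarrow (c)\Rightarrow (e)\Rightarrow (a).
\]
Most of the implications amount to routine translations between the geometric, polynomial, and place-theoretic languages. The genuinely nontrivial step is the reduction from several transcendentals to a single one, and this is where I expect the main obstacle.

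For the translations: given a finitely generated regular extension $E/K$ with separating transcendence basis $\bft$, I would view $E$ as the function field of an absolutely irreducible $K$-variety $V$ of dimension $e$ equipped with a separable dominating map $\nu\colon V\to \bbA^e$ given by the coordinates $\bft$. The implication $(a)\Rightarrow (d)$ then follows by shrinking $V$ to remove the zero locus of $r$ and the discriminant locus of a primitive element for $E/K(\bft)$; the resulting $K$-point of $V$ supplied by $(a)$ extends to the desired $K$-rational place of $E$ that is unramified over $K(\bft)$. Conversely, $(d)\Rightarrow (b)$ via the primitive element theorem: given $f(\bfT,X)$ absolutely irreducible and separable in $X$, set $E=K(\bft)[x]$ with $f(\bft,x)=0$, which is regular over $K$, and apply $(d)$ with $r$ replaced by $r(\bft)\cdot \mathrm{disc}_X(f)$. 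The equivalence $(c)\Leftrightarrow (e)$ and the implication $(e)\Rightarrow (a)$ are analogous (again using primitive elements to realize the function field of $V$ as a plane curve).

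The implication $(b)\Rightarrow (c)$ is the trivial case $e=1$. The reverse reduction is the key point, entering crucially in $(c)\Rightarrow (b)$ and in $(e)\Rightarrow (a)$. Here I would specialize along a generic $K$-line
\[
\bfT\longmapsto (a_1+b_1 t,\ldots,a_e+b_e t)
\]
by means of \cite{Bary-SorokerCharacterization}, Lemma~4 (the same device used in Lemma~\ref{lem:regularsolvablewithonet}), choosing $a_i,b_i\in K$ with $b_i\neq 0$ so that the specialized polynomial $f(a_1+b_1 t,\ldots,a_e+b_e t,X)\in K[t,X]$ remains absolutely irreducible and separable in $X$ of the same $X$-degree, and so that $r$ does not vanish along the line. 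Applying $(c)$ to this one-variable polynomial produces $(c,b)\in K_0\times K$, and then $\bfa=(a_1+b_1 c,\ldots,a_e+b_e c)\in K_0^e$ together with $b$ is the required zero of $f$ with $r(\bfa)\neq 0$.

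The main obstacle is precisely this generic-line reduction to plane curves: one must secure absolute irreducibility and the correct $X$-degree of the specialization, and one must have enough room in $K$ to choose the parameters $(a_i,b_i)$. Both requirements are exactly what the cited Lemma~4 provides, so once this reduction is in hand the cycle closes and all five conditions are equivalent.
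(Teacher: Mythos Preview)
There is a genuine gap in your reduction step. In the implication $(e)\Rightarrow(a)$ (and equally in your sketch of $(c)\Rightarrow(b)$) you choose the line parameters $a_i,b_i\in K$, obtain $c\in K_0$ from the one-variable condition, and then assert $\bfa=(a_1+b_1c,\ldots,a_e+b_ec)\in K_0^e$. But $a_i+b_ic$ lies only in $K$, not in $K_0$, so the point you produce has $\nu$-image in $K^e$ rather than $K_0^e$; this is exactly what the PAC-extension condition demands and what your argument fails to deliver. The cited lemma (Lemma~\ref{lem:regularsolvablewithonet}, resp.\ \cite{Bary-SorokerCharacterization}, Lemma~4) chooses $a_i,b_i$ in the coefficient field of $f$, which here is $K$, so invoking it verbatim does not help.

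The repair is to insist on $a_i,b_i\in K_0$. This is legitimate: the locus of parameters for which the specialized polynomial fails to be absolutely irreducible (or drops $X$-degree, or kills $r$) is a proper Zariski-closed subset of $\bbA^{2e}$ defined over $\tilde K$, and once you assume $(c)$ or $(e)$ the field $K_0$ is forced to be infinite (apply the hypothesis with $f=X-T$ and $r$ vanishing on any prescribed finite subset of $K_0$), so $K_0^{2e}$ is Zariski-dense and meets the good open set. With this adjustment your cycle closes. Note that the paper itself avoids the issue altogether: it quotes \cite{JardenRazon1994}, Lemma~1.3, for the equivalence of (a), (b), (c), and then only proves $(b)\Rightarrow(d)$ and $(e)\Rightarrow(c)$ explicitly; neither of these requires any dimension reduction, since $(b)$ already allows an arbitrary number of $T$-variables.
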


\begin{proof}
The proof of \cite{JardenRazon1994}, Lemma~1.3, gives the
equivalence between \eqref{con pac:DefinitionPACextension},
\eqref{con poly r:DefinitionPACextension}, and \eqref{con
poly:DefinitionPACextension}. Obviously \eqref{con
a:DefinitionPACextension} implies \eqref{con
b:DefinitionPACextension}, so it suffices to prove that \eqref{con
poly r:DefinitionPACextension} implies \eqref{con
a:DefinitionPACextension} and that \eqref{con
b:DefinitionPACextension} implies \eqref{con
poly:DefinitionPACextension}.

\eqref{con poly r:DefinitionPACextension} $\Rightarrow$ \eqref{con
a:DefinitionPACextension}: Let $x\in E/K(\bft)$ be integral over
$K[\bft]$ such that $E = K(\bft,x)$. Let $f(\bfT,X)\in K[\bfT,X]$ be
the absolutely irreducible polynomial which is monic and separable
in $X$ and for which $f(\bft,x)=0$. Let $0\neq g(\bft)\in K[\bft]$
be the discriminant of $f(\bfT,X)$ as a polynomial in $X$. We have
$(\bfa,b) \in K_0^e\times K$ such that $f(\bfa,b)=0$ and
$g(\bfa)r(\bfa)\neq 0,\infty$. Extend the specialization
$\bft\mapsto\bfa$ to a $K$-rational place $\phi$ of $E$ with the
following properties to conclude the implication: (1) $\phi(x) =
b\neq \infty$ (this is possible since $x$ is integral over
$K[\bft]$); (2) $\overline{K_0(\bft)} = K_0$ and $\Egag = K(b)=K$
(\cite{FriedJarden2005}, Lemma~2.2.7,); (3) $\phi$ is unramified
over $K(\bft)$ (\cite{FriedJarden2005}, Lemma~6.1.8).

\eqref{con b:DefinitionPACextension} $\Rightarrow$ \eqref{con
poly:DefinitionPACextension}: Let $f(T,X) = \sum_{k=0}^n a_k(T) X^k$
and $r(T)$ be as in \eqref{con poly:DefinitionPACextension}. Set
$r'(T) = r(T) a_n(T)$. Let $t$ be a transcendental element and let
$x\in \widetilde{K(t)}$ be such that $f(t,x) = 0$. Let $E = K(t,x)$.
Then $E$ is regular over $K$ and separable over $K(t)$. Applying
\eqref{con b:DefinitionPACextension} to $E$ and $r'(t)$ we get a
$K$-rational place $\phi$ of $E$ satisfying the following
properties. (1) $a=\phi(t)\in K_0$ which implies that $b=\phi(x)$ is
finite, since $\phi(f(t,x))=0$ and $f(a,X)$ has a nonzero leading
coefficient; (2) $\Egag=K$, which concludes the proof since $b\in
\Egag=K$.
\end{proof}

\subsection{Geometric Solutions and PAC fields}
The following result characterizes when a solution is geometric in
terms of a rational place of some regular extension. This sharpens
earlier works of Roquette on PAC Hilbertian fields
\cite{FriedJarden2005}, Corollary~27.3.3, and of Fried-Haran-Jarden
on Frobenius fields \cite{FriedJarden2005}, Proposition~24.1.4. In the case where $L=K$, it was also proved by D\`ebes in his work on the Beckmann-Black problem \cite{Debes}.

\begin{proposition}\label{prop: characterization of solutions}
Let $K$ be a field and consider a geometric embedding problem
$$
(\mu\colon \gal(K)\to\gal(L/K),\alpha\colon \gal(F/E)\to
\gal(L/K))
$$
for $K$. Let $\theta\colon \gal(K)\to \gal(F/E)$ be a
weak solution. Then there exists a finite separable extension
$\Ehat/E$ such that $\Ehat/K$ is regular and for every place $\phi$
of $E/K$ that is unramified in $F$ the following two conditions are
equivalent.
\begin{enumerate}\renewcommand{\labelenumi}{(\arabic{enumi})}
\item \label{enu:characterization of solutions a}
$\phi$ extends to a place $\Phi$ of $F$ such that $\Egag= K$ and $\Phi^* = \theta$.
\item \label{enu:characterization of solutions b}
$\phi$ extends to a $K$-rational place of $\Ehat$.
\end{enumerate}
\end{proposition}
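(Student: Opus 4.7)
The plan is to take $\hat E$ to be the fixed field, inside the compositum $FK_s$, of the ``graph'' of the weak solution $\theta$. Since $E/K$ is regular with $L=F\cap K_s$, there is a canonical identification
\[
\gal(FK_s/E)\;\cong\;\gal(F/E)\times_{\gal(L/K)}\gal(K),
\]
and $\theta$ gives the continuous section $s\colon\gal(K)\to\gal(FK_s/E)$, $\sigma\mapsto(\theta(\sigma),\sigma)$. Set $\hat E:=(FK_s)^{s(\gal(K))}$. A short group-theoretic check that $s(\gal(K))\cap(\ker\alpha\times\{1\})=\{1\}$ shows $\hat E\cdot K_s=FK_s$; from this I would deduce $[\hat E:E]=|\ker\alpha|=[F:LE]$, $\hat E\cap K_s=K$, and (using separability of $F/E$) that $\hat E/E$ is finite separable and $\hat E/K$ is regular.

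The crux is the following identity. For any extension $\Phi'$ of $\phi$ to $FK_s$ normalised so that $\Phi'|_{K_s}=\mathrm{id}$ (in particular $\Phi'|_L=\mathrm{id}$), writing $\Phi:=\Phi'|_F$ and $\psi:=\Phi'|_{\hat E}$,
\[
\psi\text{ is $K$-rational}\;\iff\;D(\Phi'/\phi)\leq s(\gal(K))\;\iff\;\Phi^*=\theta.
\]
The first equivalence is the standard translation between $K$-rationality of $\psi$ on a subfield $\hat E\subseteq FK_s$ and the containment of the decomposition group in $\gal(FK_s/\hat E)=s(\gal(K))$, valid because $FK_s/\hat E$ is Galois. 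The second is a direct computation from the defining relation $\Phi'(\Phi'^*(\sigma)(x))=\sigma(\Phi'(x))$: restricting to $F$ yields $\Phi'^*(\sigma)|_F=\Phi^*(\sigma)$, and restricting to $K_s$ together with the normalisation yields $\Phi'^*(\sigma)|_{K_s}=\sigma$. Thus $\Phi'^*(\sigma)=(\Phi^*(\sigma),\sigma)$ lies in $s(\gal(K))$ exactly when $\Phi^*(\sigma)=\theta(\sigma)$.

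Both implications of the proposition then reduce to producing a normalised $\Phi'$. For $(1)\Rightarrow(2)$, condition $(1)$ supplies $\Phi$ with $\Phi^*=\theta$ and, by the convention under which $\Phi^*$ is defined, $\Phi|_L=\mathrm{id}$; extending $\Phi$ to $\Phi'$ on $FK_s$ by $\Phi'|_{K_s}=\mathrm{id}$ (compatible on $L$), the identity delivers the $K$-rational $\psi=\Phi'|_{\hat E}$ extending $\phi$. For $(2)\Rightarrow(1)$, using $\hat EK_s=FK_s$ together with linear disjointness of $\hat E$ and $K_s$ over $K$, lift the $K$-rational $\psi$ and $\mathrm{id}|_{K_s}$ to a common place $\Phi'$ of $FK_s$; the identity forces $\Phi^*=\theta$, and $\bar E=K$ follows from $\bar E\subseteq\bar{\hat E}=K$.

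The principal obstacle is recognising the right $\hat E$: the naive candidate $F^{\theta(\gal(K))}$ is too coarse, because it sees only the image $H=\theta(\gal(K))$ and forgets $\theta$ itself, so distinct weak solutions with the same image would be indistinguishable and $(1)\Leftrightarrow(2)$ already fails in simple examples. Moving inside the larger compositum $FK_s$ and taking the fixed field of the graph of $\theta$ records $\theta$ faithfully; once this is done, the rest is a routine comparison of decomposition groups.
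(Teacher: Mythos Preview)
Your approach is correct and is essentially the paper's own argument: both take $\hat E$ to be the fixed field of the graph of $\theta$ inside a fiber product, and both reduce the equivalence to comparing decomposition groups with $s(\gal(K))$. The only cosmetic difference is that the paper works inside the \emph{finite} compositum $\hat F=FM$, where $M$ is the solution field of $\theta$ (so $\gal(M)=\ker\theta$), whereas you work inside the infinite compositum $FK_s$; since $s(\gal(K))\supseteq\{1\}\times\gal(M)$, your $\hat E$ lands in $FM$ and coincides with the paper's. The paper organizes the argument as ``isomorphism case $+$ reduction via Lemma~\ref{lem:geometricdomination}'', while you do the decomposition-group computation in one pass; your version is slightly more streamlined, at the price of handling places and decomposition groups in an infinite Galois extension.
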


\begin{proof}
First we consider the special case when $\gal(F/E)\cong \gal(L/K)$.
Then there is a unique solution of $(\mu,\alpha)$, namely $\theta =
\alpha^{-1}\mu$. Let $\Phi$ be an extension of $\phi$ to $F$ and
take $\Ehat = E$. It is trivial that \eqref{enu:characterization of
solutions a} implies \eqref{enu:characterization of solutions b}.
Assume \eqref{enu:characterization of solutions b}. Then $\Phi^*$ is
defined, and from the uniqueness, $\Phi^* = \theta$.

Next we prove the general case. Let $M/K$ be a Galois extension such
that $\gal(M)= \ker(\theta)$ (in particular, $L\subseteq M$) and let
$\Fhat = FM$.
\[
\xymatrix{%
    & F\ar@{-}[r]
    & \Fhat\\
E\ar@{-}[r]\ar@{.}@/^/[urr]|(.25){\mbox{$\Ehat$}}
    & EL\ar@{-}[r]\ar@{-}[u]
        & EM\ar@{-}[u]\\
K\ar@{-}[r]\ar@{-}[u]
    & L\ar@{-}[r]\ar@{-}[u]
        & M\ar@{-}[u]\\
}%
\]
As $F$ and $M$ are linearly disjoint over $L$, the fields $F$ and
$EM$ are linearly disjoint over $EL$. We have
\[
\gal(\Fhat/E) = \gal(F/E) \times_{\gal(L/K)} \gal(M/K).
\]
Define $\hat\theta \colon \gal(K) \to \gal(\Fhat/E)$ by $\hat\theta
(\sigma) = (\theta(\sigma), \sigma|_{M})$. Let $\Ehat$ denote the
fixed field of $\hat\theta(\gal(K))$ in $\Fhat$. Then $\hat\theta$
is a solution in
\[
\xymatrix{%
    &\gal(K)\ar[d]^{\mu}\ar[dl]_{\hat\theta}\\
\gal(\Fhat/\Ehat)\ar[r]^{\hat\alpha}
    &\gal(L/K).
}%
\]
Here $\alphahat$ is the restriction map. In particular, $\Ehat/K$ is
regular. Also $\ker(\hat\theta) = \ker(\theta)\cap \gal(M) =
\gal(M)$, so $\gal(\Fhat/\Ehat)\cong \gal(M/K)$.

Assume $\phi$ extend to a $K$-rational place $\phihat$ of $\Ehat$.
Extend $\phihat$ $L$-linearly to a place $\Phihat$ of $\Fhat$. Then
$\Phihat/\phihat$ is unramified. Let $\Phi = \Phihat|_{F}$. Then by
the first part $\Phihat^* = \thetahat$.
Lemma~\ref{lem:geometricdomination} then asserts that $\Phi^* =
\theta$.

On the other hand, assume that $\phi$ extends to a place $\Phi$ of
$F$ such that $\Egag = K$ and $\Phi^*=\theta$. Extend $\Phi$
$M$-linearly to a place $\Phihat$ of $\Fhat$. Then, since
$\res_{\Fhat,F}(\Phihat^*)=\Phi^*$ and
$\res_{\Fhat,M}(\Phihat^*)=\res_{K_s,M}$, we have
\[
\Phihat^*(\sigma) = (\Phi^*(\sigma),\sigma|_M) = \thetahat.
\]
Therefore the residue field of $\Ehat$ is also $K$.
\end{proof}

\begin{remark}
In the proof it was shown that $\Ehat\subseteq FM$, where $M$ is the
solution field of $\theta$.
\end{remark}

Proposition~\ref{prop: characterization of solutions} is extremely
useful. We first apply it to PAC fields.

\begin{corollary}\label{cor:PACfield_solutionisgeometric}
Every weak solution of a finite geometric embedding problem for a
PAC field $K$ is geometric.
\end{corollary}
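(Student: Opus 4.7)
The plan is to reduce the corollary to Proposition~\ref{prop: characterization of solutions} and then invoke the PAC property of $K$ in the form of Proposition~\ref{prop:DefinitionPACextension}(d) (applied with $K_0=K$). Concretely, write the given embedding problem as
$$
(\mu\colon \gal(K)\to \gal(L/K),\ \alpha\colon \gal(F/E)\to \gal(L/K)),
$$
and let $\theta\colon \gal(K)\to \gal(F/E)$ be a weak solution. Applying Proposition~\ref{prop: characterization of solutions} produces a finite separable extension $\Ehat/E$ with $\Ehat/K$ regular such that $\theta$ is geometric if and only if there exists a place $\phi$ of $E$, unramified in $F$, that extends to a $K$-rational place of $\Ehat$.

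Next, since $\Ehat/E$ is finite and $E/K$ is finitely generated regular, $\Ehat/K$ is itself finitely generated and regular. Choose a separating transcendence basis $\bft=(t_1,\dots,t_e)$ of $\Ehat/K$; after shrinking if necessary, we may assume $\bft$ is also a separating transcendence basis of $E/K$, so that both $\Ehat/K(\bft)$ and $F/K(\bft)$ are finite separable. Let $\beta\in F$ be a primitive element of $F/K(\bft)$ with minimal polynomial in $K[\bft][X]$, and let $d(\bft)\in K[\bft]$ be its discriminant (times the leading coefficient); then $d\neq 0$, and any place of $K(\bft)$ with $d(\bfa)\neq 0,\infty$ is unramified in $F$.

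Now apply Proposition~\ref{prop:DefinitionPACextension}(d) to the PAC field $K$ (viewed as a PAC extension of itself) with the regular extension $\Ehat/K$, basis $\bft$, and nonzero rational function $r(\bft)=d(\bft)$. This yields a $K$-rational place $\phi$ of $\Ehat$ such that $\bfa=\phi(\bft)$ is finite, $d(\bfa)\neq 0,\infty$, and $\phi$ is unramified over $K(\bft)$. The restriction $\phi|_E$ is then $K$-rational, and by multiplicativity of ramification indices together with $d(\bfa)\neq 0$, every extension of $\phi|_E$ to $F$ is unramified; in particular $\phi|_E$ is unramified in $F$. By the equivalence in Proposition~\ref{prop: characterization of solutions}, $\phi|_E$ extends to a place $\Phi$ of $F$ with residue field $K$ and $\Phi^*=\theta$, so $\theta$ is geometric.

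The only delicate point, which will be the main thing to verify carefully, is the correct choice of the auxiliary polynomial $r$ so that $r(\bfa)\neq 0$ simultaneously forces both the rationality/regularity data of $\Ehat$ to behave (as built into part (d) of Proposition~\ref{prop:DefinitionPACextension}) and the place of $E$ obtained by restriction to be unramified in $F$; the discriminant of a primitive element of $F/K(\bft)$ does this job, and all remaining steps are immediate applications of the preceding propositions.
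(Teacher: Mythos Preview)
Your proof is correct and follows essentially the same route as the paper: invoke Proposition~\ref{prop: characterization of solutions} to obtain $\Ehat$, then use the PAC property to find a $K$-rational place of $\Ehat$ whose restriction to $E$ is unramified in $F$. The paper's version is terser and leaves the unramifiedness of $\phihat|_E$ in $F$ implicit, whereas you spell out the discriminant argument; the only cosmetic point is that your phrase ``after shrinking if necessary'' is awkward---it is cleaner to first pick $\bft$ as a separating transcendence basis of $E/K$ and then note that $\Ehat/K(\bft)$ is automatically finite separable since $\Ehat/E$ is.
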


\begin{proof}
Let $ (\mu\colon \gal(K)\to\gal(L/K),\alpha\colon \gal(F/E)\to
\gal(L/K))$ be a finite geometric embedding problem for $K$ and
$\theta$ a solution. Let $\Ehat$ be the regular extension of $K$
given in Proposition~\ref{prop: characterization of solutions}.
There exists a $K$-rational place $\phihat$ of $\Ehat$. Then by the
proposition we can extend $\phihat|_{E}$ to a place $\Phi$ of $F$
such that $\Phi^* = \theta$.
\end{proof}

\subsection{Double Embedding Problems}
In this section we generalize the notion of embedding problems to
field extensions.

\subsubsection{The Definition of Double Embedding Problems}
\label{sec:DEP} Let $K/K_0$ be a field extension. A \textbf{double
embedding problem (DEP)} for $K/K_0$ consists of two embedding
problems: $(\mu\colon \gal(K) \to G, \alpha\colon H\to G)$ for $K$
and $(\mu_0\colon \gal(K_0) \to G_0, \alpha\colon H_0\to G_0)$ for
$K_0$, which are compatible in the following sense. $H\leq H_0$,
$G\leq G_0$, and if we write $i\colon H\to H_0$ and $j\colon G\to
G_0$ for the inclusion maps and $r$ for the restriction map $\gal(K) \to \gal(K_0)$, then the following diagram commutes.

\begin{equation}
\label{eq:DoubleEP}%
\xymatrix{%
		&&\gal(K)\ar[d]^r\ar@{->>}[ddr]_{\mu}\ar@{.>}[ddll]_{\exists \theta?}\\
		&&\gal(K_0)\ar@{->>}[d]_{\mu_0}\ar@{.>}[dl]_{\exists\theta_0?}
\\
H\ar@{^(->}[r]^{i}\ar@{->>}@/_10pt/[rrr]_{\alpha}
	&H_0\ar@{->>}[r]^{\alpha_0}
		&G_0
			&G\ar@{_(->}[l]_{j}
}
\end{equation}

Given a DEP for $K/K_0$, we refer to the corresponding embedding
problem for $K$ (resp.\ $K_0$) as the \textbf{lower} (resp.\ the
\textbf{upper}) embedding problem. We call a DEP \textbf{finite} if
the upper (and hence also the lower) embedding problem is finite.

A \textbf{weak solution} of a DEP \eqref{eq:DoubleEP} is a weak
solution $\theta_0$ of the upper embedding problem which restricts
to a weak solution $\theta$ of the lower embedding problem via the
restriction map $r\colon\gal(K)\to \gal(K_0)$. In case $K/K_0$ is a
separable algebraic extension, the restriction map is the inclusion
map, and hence the condition on $\theta_0$ reduces to
$\theta_0(\gal(K))\leq H$. To emphasize the existence of $\theta$,
we usually regard a weak solution of a DEP as a pair
$(\theta,\theta_0)$ (where $\theta$ is the restriction of $\theta_0$
to $\gal(K)$).

\subsubsection{Rational Double Embedding Problems} Consider
a double embedding problem \eqref{eq:DoubleEP} and let $L_0$ and $L$
be the fixed fields of the kernels of $\mu_0$ and $\mu$,
respectively. Then we have isomorphisms $\mugag_0\colon\gal(L_0/K_0)
\to G_0$ and $\mugag\colon\gal(L/K) \to G$. Hence (as in the case of
embedding problems) replacing $G_0$ and $G$ with $\gal(L_0/K_0)$ and
$\gal(L/K)$ (and replacing correspondingly all the maps) gives us an
equivalent DEP. The compatibility condition is realized as $L = L_0
K$.

In the context of this work we are mainly interested in double
embedding problems which satisfy some rationality condition.

\begin{definition}
If in a double embedding problem the upper embedding problem is
rational, we say that the double embedding problem is
\textbf{rational}.
\end{definition}

\begin{lemma}
If \eqref{eq:DoubleEP} is a rational DEP, then the upper embedding
problem is rational and the lower embedding problem is geometric.

Moreover, we can take $e=1$, i.e.\ $\bft = t$ -- a
transcendental element.
\end{lemma}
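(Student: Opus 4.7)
The first assertion is immediate from the definition of a rational DEP. For the second, write the upper embedding problem as
\[
(\mu_0\colon \gal(K_0)\to \gal(L_0/K_0),\ \alpha_0\colon \gal(F_0/K_0(\bft))\to \gal(L_0/K_0)),
\]
with $F_0/K_0(\bft)$ finite Galois and $L_0=F_0\cap K_{0s}$; set $H_0=\gal(F_0/K_0(\bft))$ and $G_0=\gal(L_0/K_0)$. Compatibility forces $G\le G_0$ to be $G=\gal(L_0/L_0\cap K)\cong \gal(L/K)$ with $L=L_0K$. I would set $E:=K(\bft)$ and $F:=F_0\cdot E=F_0K(\bft)$, so that $E/K$ is purely transcendental (hence finitely generated and regular), and $F/E$ is finite Galois as the compositum of a Galois extension with $E$. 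It then remains to show that $(\mu\colon \gal(K)\to \gal(L/K),\ \res\colon \gal(F/E)\to \gal(L/K))$ realizes the lower embedding problem geometrically: one must identify $\gal(F/E)$ (under the restriction embedding into $H_0$) with the subgroup $H\le H_0$ of the DEP, and check $F\cap K_s=L$.

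The substantive step uses that $F_0/L_0$ is a regular extension (separability of the Galois extension $F_0/K_0(\bft)$ together with $L_0=F_0\cap K_{0s}$). Regularity is preserved under base change, so $F_0$ and $L_0K(\bft)$ are linearly disjoint over $L_0(\bft)$, whence
\[
F_0\cap K(\bft)=L_0(\bft)\cap K(\bft)=(L_0\cap K)(\bft).
\]
Thus $\gal(F/E)=\gal(F_0/F_0\cap K(\bft))$ is exactly the subgroup of $H_0$ fixing $L_0\cap K$, namely $\alpha_0^{-1}(G)$, which is the canonical $H$ forced by compatibility. For the residue condition, note that $\bft\subseteq F_0$ gives $K(\bft)\subseteq F_0K$, hence $F=F_0K$; the regular base change $F_0K/L_0K$ then shows that $L_0K$ is algebraically closed in $F$, and since $L_0K\subseteq K_s$ we conclude $F\cap K_s=L_0K=L$.

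The ``moreover'' clause follows by first applying Lemma~\ref{lem:regularsolvablewithonet} to the upper rational embedding problem to replace $\bft$ by a single transcendental $t$; running the same recipe with $E=K(t)$ and $F=F_0K(t)$ then supplies the geometric realization in one variable. The main obstacle is the linear-disjointness calculation $F_0\cap K(\bft)=(L_0\cap K)(\bft)$: only this identification shows that the Galois group of the geometric realization matches the subgroup $H\le H_0$ prescribed by the DEP, and it is also what forces $F\cap K_s=L$.
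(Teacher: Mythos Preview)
Your argument has a genuine gap: you set $E:=K(\bft)$, which forces
\[
\gal(F/E)=\gal(F_0K/K(\bft))\cong \alpha_0^{-1}(G).
\]
You then assert that this is ``the canonical $H$ forced by compatibility,'' but that is false. The compatibility condition in \eqref{eq:DoubleEP} only says that $j\circ\alpha=\alpha_0\circ i$; since $\alpha\colon H\to G$ is surjective, this gives $\alpha_0(H)=G$ and hence $H\subseteq\alpha_0^{-1}(G)$, but nothing forces equality. For instance, if $H_0=G_0\times A$ with $\alpha_0$ the first projection and $G=G_0$, then $\alpha_0^{-1}(G)=H_0$, yet $H=G_0\times\{1\}$ is a perfectly legitimate choice. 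So your construction realizes only the special case in which the lower group is the full preimage; for a general DEP the prescribed $H$ may be strictly smaller, and then your $E=K(\bft)$ does not give $\gal(F/E)=H$.

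The paper's proof handles this correctly by letting $E$ be the fixed field of $H$ inside $F=F_0K$, so that $K(\bft)\subseteq E\subseteq F$ and $\gal(F/E)=H$ by construction. The point of the lemma is then to check that this (in general non-rational) $E$ is regular over $K$: from $\alpha(H)=\gal(L/K)$ one gets $E\cap L=K$, and since $E\subseteq F$ with $F/L$ regular, $E/K$ is regular. This is exactly why the conclusion says the lower embedding problem is \emph{geometric} rather than rational. Your linear-disjointness computation $F_0\cap K(\bft)=(L_0\cap K)(\bft)$ is correct and is what underlies the identification $\gal(F/K(\bft))\cong\gal(F_0/(L_0\cap K)(\bft))$, but it is not the end of the story: you must then pass to the fixed field of the given $H$ inside $F$ and verify regularity of that field over $K$.
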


\begin{proof}
Let \eqref{eq:DoubleEP} be a rational double embedding problem. By
definition, it means that $G_0 = \gal(L_0/K_0)$, there exists a
regular extension $F_0$ of $L_0$ and a separating transcendence
basis $\bft\in F_0$ such that $F_0/K_0(\bft)$ is Galois with Galois
group $H_0 = \gal(F_0/K_0(\bft))$, and $\alpha_0\colon H_0 \to G_0$
is the restriction map. By Lemma~\ref{lem:regularsolvablewithonet}
we may assume that $\bft=t$.

Let $F=F_0K$ and $L = L_0K$. The compatibility condition implies
that $H$ embeds into $\gal(F_0/K_0(\bft))$ (via $i$) as a subgroup
of $\gal(F_0/ L_0 \cap K (\bft)) \cong \gal(F/K(\bft))$. Let
$E\subseteq F$ be the fixed field of $H$, i.e., $H = \gal(F/E)$.
Under this embedding, $\alpha\colon \gal(F/K(\bft))\to \gal(L/K)$ is
the restriction map. Therefore $\alpha(H) = \gal(L/K)$ implies that
$E\cap L = K$, and hence $E$ is regular over $K$.

\begin{equation}
\label{eq:RationalDoubleEP}%
\xymatrix{
		&&\gal(K)\ar[d]^r\ar@{->>}[ddr]^{\mu}\\
		&&\gal(K_0) \ar@{->>}[d]^{\mu_0} \\
\gal(F/E) \ar@{^(->}[r]^-{i}\ar@{->>}@/_15pt/[rrr]_{\alpha}
	&\gal(F_0/K_0(t))\ar@{->>}[r]^{\alpha_0}
		&\gal(L_0/K_0)
			&\gal(L/K).\ar@{_(->}[l]_-{j}
}
\end{equation}

Consequently the lower embedding problem is geometric.
\end{proof}

\begin{remark}
The converse of the above lemma is also valid, that is to say,
assume we have a rational double embedding problem as in
\eqref{eq:RationalDoubleEP}, i.e.\ a finitely generated regular
extension $E/K$, a separating transcendence basis $\bft$ for $E/K$,
and a finite Galois extension $F_0/K_0(\bft)$ such that $E\subseteq
F$, where $F=F_0K$. Then all the restriction maps in
\eqref{eq:RationalDoubleEP} are surjective, and hence
\eqref{eq:RationalDoubleEP} defines a finite double embedding
problem.
\end{remark}

\subsubsection{Geometric Solutions of Double Embedding Problems}
First recall that a weak solution $\theta$ of an embedding problem
\[
(\mu\colon \gal(K)\to \gal(L/K), \alpha\colon \gal(F/E)\to
\gal(L/K))
\]
is geometric if $\theta=\Phi^*$, for some place $\Phi$ of $F$ that
is unramified over $E$ and under which the residue field of $E$ is
$K$. Then we call a weak solution $(\theta,\theta_0)$ of
\eqref{eq:RationalDoubleEP} \textbf{geometric} if
$(\theta,\theta_0)=(\Phi^*,\Phi_0^*)$, where $\Phi^*$ is a geometric
solution of the lower embedding problem and $\Phi_0 = \Phi|_{F_0}$.

Note that since $\Phi_0^*$ is a solution of the upper embedding
problem, the residue field of $K_0(\bft)$ is $K_0$. In particular,
if $\Phi(\bft)$ is finite, then $\Phi(\bft) \in K_0^e$. Also note
that for a place of $F$ that is unramified over $E$ and such that
$\Egag= K$ and $\overline{K_0(\bft)}=K_0$, the pair
$(\Phi^*,\Phi_0^*)$ is indeed a weak solution of
\eqref{eq:RationalDoubleEP}, since $\Phi_0^* =
\res_{K_s,K_{0s}}\Phi^*$.

\section{The Lifting Property}
In this section we formulate and prove the lifting property. First
we reduce the discussion to separable algebraic extensions by
showing that if $K/K_0$ is PAC, then $K\cap K_{0s}/K_0$ is PAC and
$\gal(K)\cong \gal(K\cap K_{0s})$ via the restriction map. Then we
characterize separable algebraic PAC extensions in terms of
geometric solutions of double embedding problems. From this
characterization we establish the lifting property. Finally we prove
a strong (but complicated) version of the lifting problem to PAC
extensions of finitely generated fields.

\subsection{Reduction to Separable Algebraic Extensions}
In \cite{JardenRazon1994}, Corollary~1.5, Jarden and Razon show

\begin{lemma}[Jarden-Razon] If $K/K_0$ is PAC, then so is $K\cap
K_{0s}/K_0$.
\end{lemma}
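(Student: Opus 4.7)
I would prove the lemma via the polynomial criterion for PAC extensions given in Proposition~\ref{prop:DefinitionPACextension}\eqref{con poly:DefinitionPACextension}. Set $K_1 = K \cap K_{0s}$. The plan is to take an arbitrary test pair $(f,r)$ for $K_1/K_0$, view it as a test pair for $K/K_0$ (after mildly enlarging $r$), apply the PAC property of $K/K_0$ to produce a witness $(a,b) \in K_0 \times K$, and then argue that $b$ actually lies in $K_1$.

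So let $f(T,X) \in K_1[T,X]$ be absolutely irreducible and separable in $X$, write $f(T,X) = \sum_{k=0}^{n} a_k(T)X^k$, and let $0 \neq r(T) \in K_1(T)$. Let $d(T) \in K_1[T]$ be the discriminant of $f$ with respect to $X$; separability of $f$ in $X$ forces $d(T)$ to be a nonzero polynomial, and $a_n(T)$ is nonzero by assumption. Replace $r(T)$ by $r'(T) = r(T)\,a_n(T)\,d(T) \in K_1(T)^\times \subseteq K(T)^\times$. Since $f$ is absolutely irreducible over $K_1$ it is absolutely irreducible over $K$, it is separable in $X$, and $r'$ is nonzero. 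Applying the PAC property of $K/K_0$ via Proposition~\ref{prop:DefinitionPACextension}\eqref{con poly:DefinitionPACextension} yields $(a,b) \in K_0 \times K$ with $f(a,b)=0$ and $r'(a) \neq 0$.

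Because $a_n(a) \neq 0$, the specialized polynomial $f(a,X) \in K_1[X]$ has degree $n$, and because $d(a) \neq 0$ it is separable over $K_1$. Thus $b$, being a root of $f(a,X)$, is separable algebraic over $K_1$. Since $K_1 \subseteq K_{0s}$, this means $b \in K_{0s}$; combined with $b \in K$ we obtain $b \in K \cap K_{0s} = K_1$. As $r'(a) \neq 0$ implies $r(a) \neq 0$, the pair $(a,b) \in K_0 \times K_1$ witnesses Proposition~\ref{prop:DefinitionPACextension}\eqref{con poly:DefinitionPACextension} for $K_1/K_0$, so $K_1/K_0$ is PAC.

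There is no serious obstacle here; the only subtlety worth flagging is the need to absorb both the leading coefficient $a_n(T)$ and the $X$-discriminant $d(T)$ into $r$ before invoking the PAC hypothesis, which is what guarantees that the root produced in $K$ is forced to be separable over $K_1$, and hence lands in the desired intersection.
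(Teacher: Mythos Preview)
Your proof is correct. The paper itself does not give a proof of this lemma; it merely attributes the statement to Jarden--Razon \cite{JardenRazon1994}, Corollary~1.5. Your argument via the polynomial criterion of Proposition~\ref{prop:DefinitionPACextension}\eqref{con poly:DefinitionPACextension} is exactly the natural one (and is essentially the Jarden--Razon argument): absorb the leading coefficient and the $X$-discriminant into $r$, use the PAC property of $K/K_0$ to produce a root $b\in K$ of the separable polynomial $f(a,X)\in K_1[X]$, and then observe that separability over $K_1\subseteq K_{0s}$ forces $b\in K_{0s}$, hence $b\in K_1$. There is nothing to correct.
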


Moreover, we have

\begin{theorem}\label{thm:nonalgebraicPAC}
Let $K/K_0$ be a PAC extension. Then $K\cap K_{0s}/K_0$ is PAC
and the restriction map $\gal(K)\to \gal(K\cap K_{0s})$ is an
isomorphism.
\end{theorem}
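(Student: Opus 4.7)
Plan.  The first assertion, that $K\cap K_{0s}/K_0$ is PAC, is exactly the Jarden--Razon lemma recalled immediately before the statement, so nothing new is needed there.  What requires proof is the isomorphism
\[
\gal(K)\;\cong\;\gal(K\cap K_{0s})
\]
induced by the restriction map.  Setting $L=K\cap K_{0s}$, I would examine
\[
r\colon \gal(K)\longrightarrow\gal(K_{0s}/L)
\]
and establish its surjectivity and injectivity separately.

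Surjectivity is formal Galois theory.  The extension $K_{0s}/L$ is Galois (it is a subextension of $K_{0s}/K_0$ and $L\supseteq K_0$) and by definition $K\cap K_{0s}=L$, so $K$ and $K_{0s}$ are linearly disjoint over $L$.  Consequently the restriction $\gal(KK_{0s}/K)\to\gal(K_{0s}/L)$ is an isomorphism, and composing it with the canonical surjection $\gal(K)\twoheadrightarrow\gal(KK_{0s}/K)$ gives the surjectivity of $r$.

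For injectivity, $\ker r=\gal(K_s/KK_{0s})$, so it suffices to prove the equality $K_s=K\cdot K_{0s}$; equivalently, that every finite separable extension $M/K$ inside $K_s$ is contained in $K\cdot K_{0s}$.  Here I would invoke Razon's splitting theorem (Theorem~\ref{thm:intSplitting}) applied to $M/K$: it supplies a separable algebraic extension $M_0/K_0$, linearly disjoint from $K$ over $K_0$, with $M_0\cdot K=M$.  Since $M_0/K_0$ is separable algebraic and $M_0\subseteq M\subseteq K_s$, the field $M_0$ lies inside $K_{0s}$, so $M=M_0K\subseteq KK_{0s}$.  Taking the union over all finite separable $M/K$ yields $K_s\subseteq KK_{0s}$, and the reverse inclusion is trivial.

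The main (and really the only) obstacle is the identity $K_s=K\cdot K_{0s}$, which I dispose of by citing Razon's theorem.  A proof from first principles at this point in the paper --- before the double embedding problem machinery and the lifting property are developed --- would essentially have to reproduce Razon's descent argument, applying condition~(b) of Proposition~\ref{prop:DefinitionPACextension} to the generic polynomial $X^n+T_1X^{n-1}+\cdots+T_n$ attached to a primitive element of each finite Galois $M/K$ in order to realise $M$ as a specialisation with $K_0$-rational base point.  Since this is precisely the content of Razon's result, invoking it directly is the natural route.
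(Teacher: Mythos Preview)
Your proof is correct, but it takes a different route from the paper's. You invoke Razon's splitting theorem (Theorem~\ref{thm:intSplitting}) to obtain $K_s=K\cdot K_{0s}$, whereas the paper gives a direct argument: for a finite Galois $L/K$ with group $G\hookrightarrow S_n$, it takes a regular realization $F_0/K_0(\bft)$ of $S_n$, forms $F=F_0K$, and inside $FL$ passes to the fixed field $E$ of the diagonal copy of $G$ in $S_n\times G$; the PAC property then produces a $K$-rational place of $E$ with $\overline{K_0(\bft)}=K_0$, from which one reads off $L\subseteq \Fgag_0 K\subseteq K_{0s}K$. In other words, the paper carries out by hand exactly the descent you anticipated in your final paragraph. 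Your approach is shorter and perfectly legitimate, since Razon's theorem is an established result cited in the introduction; however, the paper deliberately avoids it here because one of its aims is to \emph{reprove} Razon's theorem via the lifting machinery (see the remark immediately following the proof and Section~\ref{sec:descent}), so a self-contained argument at this stage keeps the overall logical flow cleaner. There is no actual circularity in your version---Razon's original proof in \cite{Razon2000} is independent---but the paper's direct proof makes the present theorem logically prior to, rather than a consequence of, Theorem~\ref{thm:intSplitting}.
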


\begin{proof}
It suffices to show that $K_s =  K_{0s} K$. Let $L/K$ be a finite
Galois extension with Galois group $G$ of order $n$. Embed $G$ into
the symmetric group $S_n$. Let $F_0/K_0(\bft)$ be a regular
realization of $S_n$ with $F_0$ algebraically independent from $K$
over $K_0$ (\cite{Lang2002}, p.~272, Example~4). Then $F = F_0 K$ is
regular over $K$ and $\gal(F/K(\bft)) \cong S_n$. Furthermore
\begin{eqnarray*}
\gal(FL/K(\bft)) &=& \gal(FL/L(\bft))\times \gal(FL/F)\\
    &\cong& \gal(F/K(\bft))\times \gal(L/K) \cong S_n \times G.
\end{eqnarray*}
Let $E$ be the fixed field of the subgroup $\Delta=\{(g,g)\mid g\in
G\}$ in $FL$, i.e., $\gal(FL/E) \cong \Delta$. By Galois
correspondence,  $S_n \Delta = S_n\times G$ implies that $E\cap L =
K$ and $1 = \Delta\cap S_n = G\cap \Delta$ implies that $FL = EL =
FE$. In particular, $E/K$ is regular.
\[
\xymatrix{%
F\ar@{-}[r]
    &FL\\
K(\bft)\ar@{-}[u]\ar@{-}[r]\ar@{-}[ur]|{E}
    &L(\bft)\ar@{-}[u]\\
K\ar@{-}[u]\ar@{-}[r]
    &L\ar@{-}[u].
}%
\]

As $K/K_0$ is PAC, there is a $K$-rational place $\phi$ of $E$ such
that $\overline{K_0(\bft)} = K_0$, $\bfa=\phi(\bft)$ is finite, and
$h(\bfa)\neq 0$. Extend $\phi$ to a place $\Phi$ of $FL$ which is
trivial on $L$.

Then since $\Phi/\phi$ is unramified, $FL = EL$ implies that
$\overline{FL} = \overline{EL} = L$ (\cite{FriedJarden2005},
Lemma~2.4.8). However, as $F = F_0 K$, it follows that $\Fgag =
\Fgag_0 K$, hence (again by \cite{FriedJarden2005}, Lemma~2.4.8), $L
= \overline{FL} = \overline{FE}= \Fgag = \Fgag_0 K\subseteq
K_{0s}K$, as needed.
\end{proof}

\begin{remark}
The above theorem also follows from Razon's result
Theorem~\ref{thm:intSplitting}. However we shall prove the converse
implication.
\end{remark}

PAC fields have a nice elementary theory. Since $K$ and $K\cap
K_{0s}$ are PAC fields and since they have isomorphic absolute
Galois groups, they are elementary equivalent under some necessary
condition:

\begin{corollary}
Let $K/K_0$ be a separable PAC extension. Assume that $K$ and $K\cap
K_{0s}$ have the same degree of imperfection. Then $K$ is an elementary
extension of $K\cap K_{0s}$.
\end{corollary}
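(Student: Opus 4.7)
The plan is to reduce to the classical elementary equivalence theory of PAC fields. Set $E := K\cap K_{0s}$. First I would invoke Theorem~\ref{thm:nonalgebraicPAC} applied to $K/K_0$ to conclude that $E/K_0$ is PAC and that the restriction map $\gal(K)\to \gal(E)$ is an isomorphism of profinite groups. Since $K/K_0$ and $E/K_0$ are PAC extensions, both $K$ and $E$ are PAC as \emph{fields}; this is immediate from the polynomial form of the definition in Proposition~\ref{prop:DefinitionPACextension}, since any absolutely irreducible polynomial $f\in K[T,X]$ separable in $X$ furnishes in particular a point $(a,b)\in K\times K$ with $f(a,b)=0$ (and similarly for $E$).

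Next I would verify that $K/E$ is a regular field extension. The extension $K/K_0$ is separable by hypothesis, so the subextension $K/E$ is separable as well. In particular, any element of $K$ that is algebraic over $E$ is separable algebraic over $E$, hence lies in $K\cap E_s\subseteq K\cap K_{0s}=E$. Thus $E$ is relatively algebraically closed in $K$, which together with separability yields regularity of $K/E$.

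Finally I would appeal to the elementary extension version of the elementary equivalence theorem for PAC fields (see \cite{FriedJarden2005}, Chapter~20): given PAC fields $E\subseteq K$ sharing the same degree of imperfection, with $K/E$ regular and with the restriction map $\gal(K)\to\gal(E)$ an isomorphism, one has $E\prec K$. All these hypotheses have just been verified, so the conclusion $E\prec K$ follows. The main obstacle is bookkeeping: one has to locate the precise form of the elementary equivalence result used here and check that any auxiliary hypothesis on the Galois isomorphism (for example, compatibility of the isomorphism with a common algebraic closure) is automatic. This is indeed the case because the isomorphism in question is literally induced by the field inclusion $E\hookrightarrow K$, so no further compatibility is needed.
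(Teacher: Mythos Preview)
Your proposal is correct and follows essentially the same approach as the paper: the paper's proof simply cites Theorem~\ref{thm:nonalgebraicPAC} together with \cite{FriedJarden2005}, Corollary~20.3.4, and your argument just unpacks the hypotheses (PACness of both fields, regularity of $K/E$, and the Galois isomorphism via restriction) needed to invoke that corollary.
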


\begin{proof}
The assertion follows from \cite{FriedJarden2005}, Corollary~20.3.4,
and Theorem~\ref{thm:nonalgebraicPAC}.
\end{proof}

\subsection{Characterization of Separable Algebraic PAC Extensions}

\begin{proposition}
\label{prop:ExistGeoSol}%
Let $K/K_0$ be a separable algebraic field extension. The following
conditions are equivalent:
\begin{label1}
\item
\label{con_a:ExistGeoSol} $K/K_0$ is PAC.
\item
\label{con_b_mid:ExistGeoSol}%
For every finite rational double embedding problem
\eqref{eq:RationalDoubleEP} for $K/K_0$ and every nonzero rational
function $r(\bft)\in K(\bft)$, there exists a  geometric weak
solution $(\Phi^*,\Phi_0^*)$ such that $\bfa=\Phi(\bft)$ is finite
and $r(\bfa)\neq 0,\infty$.
\item
\label{con_b:ExistGeoSol}%
For every finite rational double embedding problem
\eqref{eq:RationalDoubleEP} for $K/K_0$ with $\bft=t$ a
transcendental element there exist infinitely many geometric weak
solutions $(\Phi^*,\Phi_0^*)$.
\end{label1}
\end{proposition}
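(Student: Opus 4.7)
The plan is to prove the cycle $(a)\Rightarrow(b)\Rightarrow(c)\Rightarrow(a)$, using Proposition~\ref{prop:DefinitionPACextension} as the dictionary between the PAC property and the existence of rational places of regular extensions.

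\textbf{$(a)\Rightarrow(b)$.} Given a finite rational DEP as in \eqref{eq:RationalDoubleEP} with regular extension $E/K$ and a nonzero $r(\bft)\in K(\bft)$, apply condition (d) of Proposition~\ref{prop:DefinitionPACextension} to $E/K$ and $r$. This produces a $K$-rational place $\phi$ of $E$, unramified over $K(\bft)$, with $\overline{K_0(\bft)}=K_0$, $\bfa=\phi(\bft)$ finite, and $r(\bfa)\ne 0,\infty$. Extend $\phi$ to a place $\Phi$ of $F$ and, after composing with an element of $\gal(F/E)$, arrange that $\Phi|_L=\mathrm{id}_L$. Then $\Phi^*$ is a geometric weak solution of the lower embedding problem; the restriction $\Phi_0^* = (\Phi|_{F_0})^*$ is a weak solution of the upper embedding problem (the identity $\alpha_0\circ\Phi_0^*=\mu_0$ follows because $\Phi$, hence $\Phi_0$, fixes $L_0\subseteq L$ pointwise); together they constitute the required geometric weak solution of the DEP.

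\textbf{$(b)\Rightarrow(c)$.} Fix a finite rational DEP with $\bft=t$ and construct geometric weak solutions inductively. Suppose $(\Phi_1^*,\Phi_{0,1}^*),\ldots,(\Phi_n^*,\Phi_{0,n}^*)$ have been produced, with pairwise distinct specializations $a_i:=\Phi_i(t)\in K_0$. Invoke (b) with the nonzero polynomial $r(t)=\prod_{i=1}^{n}(t-a_i)\in K_0[t]$ to obtain a new solution $(\Phi_{n+1}^*,\Phi_{0,n+1}^*)$ with $\Phi_{n+1}(t)=a_{n+1}\ne a_i$ for every $i\le n$. Iterating yields an infinite family of geometric weak solutions, indexed by the distinct $a_i$.

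\textbf{$(c)\Rightarrow(a)$.} I verify condition (e) of Proposition~\ref{prop:DefinitionPACextension}. Given $E/K$ finitely generated regular with separating transcendence basis $t$, and $0\ne r(t)\in K[t]$, the plan is to realize $E$ inside the lower embedding problem of a finite rational DEP for $K/K_0$ and then apply (c). Starting from the Galois closure $F$ of $E/K(t)$ (automatically regular over $K$), I enlarge $F$ by compositing with $F'_0K$, where $F'_0/K_0(t)$ is a regular Galois realization over $K_0(t)$ of a sufficiently large group---for instance the generic $S_n$-extension of \cite{Lang2002} used in the proof of Theorem~\ref{thm:nonalgebraicPAC}, chosen algebraically independent from $F$ over $K$. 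After this enlargement, the resulting $\tilde F$ has the form $\tilde F=\tilde F_0K$ with $\tilde F_0/K_0(t)$ Galois and regular over $L_0:=\tilde F_0\cap K_{0s}$, and $E\subseteq \tilde F$ arises as the fixed field of $\gal(\tilde F/E)\le \gal(\tilde F_0/K_0(t))$. Condition (c) applied to this DEP supplies infinitely many geometric weak solutions; the corresponding specializations $a\in K_0$ are infinite in number, so all but finitely many satisfy $r(a)\ne0,\infty$ with $\phi$ finite on the integral closure of $K[t]$ in $E$. Such an $a$ furnishes the required $K$-rational place of $E$.

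The main obstacle is the descent/enlargement step in $(c)\Rightarrow(a)$: the Galois closure $F$ of $E/K(t)$ need not descend to a Galois extension of $K_0(t)$ regular over $L_0$---it may fail to be Galois over $K_0(t)$ at all---so the enlargement via a generic Galois realization is unavoidable, and one must carefully verify that the enlarged data still form a bona fide rational DEP with the prescribed $E$ intact as a subfield of $\tilde F$ and regular over $K$.
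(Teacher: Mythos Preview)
Your $(a)\Rightarrow(b)$ and $(b)\Rightarrow(c)$ are correct and agree with the paper (the paper compresses $(b)\Rightarrow(c)$ into one sentence, but your inductive construction with $r(t)=\prod_i(t-a_i)$ is precisely what is intended).

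The gap is in $(c)\Rightarrow(a)$. Your $S_n$-enlargement does not achieve the descent you need. If ``algebraically independent from $F$'' means that $F'_0$ is built over fresh variables, then the resulting DEP lives over several transcendentals and condition~(c), which is stated for a single $t$, does not apply. If instead $F'_0$ is over the same $t$, then there is no reason for $F\cdot F'_0K$ to equal $\tilde F_0 K$ with $\tilde F_0/K_0(t)$ finite Galois: the obstruction is that $F$ is generated over $K(t)$ by elements whose minimal polynomials have coefficients in $K$, and adjoining an unrelated $S_n$-extension does nothing to push those coefficients down to $K_0$. The $S_n$-trick in the proof of Theorem~\ref{thm:nonalgebraicPAC} goes the other way---it \emph{constructs} a particular $E$ from given data---and does not help when $E$ is prescribed in advance.

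The paper's argument avoids all of this by using the standing hypothesis that $K/K_0$ is \emph{separable algebraic}. Write $E=K(t,x)$; the finitely many coefficients of the minimal polynomial of $x$ over $K(t)$ lie in some finite subextension $K_1$ of $K/K_0$, so $E_1:=K_1(t,x)$ satisfies $E=E_1K$. Now simply take $F_0$ to be the Galois closure of $E_1$ over $K_0(t)$; then $E\subseteq F_0K=:F$, and with $L_0=F_0\cap K_{0s}$, $L=L_0K$ one has a genuine rational DEP whose lower embedding problem has $E$ as the fixed field of $\gal(F/E)$. Applying (c) to this DEP gives infinitely many geometric weak solutions, hence one with $\Phi(t)$ finite and $r(\Phi(t))\ne 0,\infty$, which is exactly what condition~\eqref{con b:DefinitionPACextension} of Proposition~\ref{prop:DefinitionPACextension} requires. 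So the ``main obstacle'' you flag is not an obstacle at all: descend $E$ first, then take the Galois closure over $K_0(t)$, and no enlargement is needed.
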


\begin{proof}
The implication
\eqref{con_a:ExistGeoSol}$\Rightarrow$\eqref{con_b_mid:ExistGeoSol}
follows from Proposition~\ref{prop:DefinitionPACextension} (part
\eqref{con a:DefinitionPACextension}) and the definition of geometric
weak solutions.

Taking $\bft=t$ a transcendental element (instead of a general tuple) in \eqref{con_b:ExistGeoSol} yields 
\eqref{con_b_mid:ExistGeoSol}.

\eqref{con_b:ExistGeoSol}$\Rightarrow$\eqref{con_a:ExistGeoSol}:
We apply Proposition~\ref{prop:DefinitionPACextension} and show
that \eqref{con b:DefinitionPACextension} holds. Let $E/K$ be a
regular extension with a separating transcendence basis $t$ and
let $r(t)\in K[t]$ be nonzero. Choose $F_0$ to be a finite
Galois extension of $K_0(t)$ such that $E\subseteq F_0 K$ (such
$F_0$ exists since $K/K_0$ is separable and algebraic). Let $F =
F_0K$, $L = F\cap K_s$, and $L_0 = F_0\cap K_{0s}$. By
assumption there are infinitely many geometric weak solutions
$(\Phi^*,\Phi_0^*)$ of the DEP
\[
\xymatrix{
		&&\gal(K)\ar[d]\ar[ddr]\ar@{.>}[ddll]_{\Phi^*}\\
		&&\gal(K_0) \ar[d] \ar@{.>}[dl]_{\Phi_0^*}\\
\gal(F/E) \ar[r]\ar@/_15pt/[rrr]
	&\gal(F_0/K_0(t))\ar[r]
		&\gal(L_0/K_0)
			&\gal(L/K).\ar[l]
}
\]

Since for only finitely many solutions $\Phi(t)$ is infinite or
$r(\Phi(t))=0,\infty$ we can find a solution such that $\Phi(t)\neq
\infty$ and $r(\Phi(t))\neq 0,\infty$. In particular, $\Egag = K$
and $\overline{K_0(t)}=K_0$, as required in \eqref{con
b:DefinitionPACextension}.
\end{proof}

Let $K/K_0$ be a PAC extension and consider a rational DEP for
$K/K_0$. The following key property -- the lifting property --
asserts that any weak solution of the lower embedding problem can be
lifted to a geometric weak solution of the DEP.

\begin{proposition}[The lifting property]
\label{prop:ExtensionSoltoGeoSol_DEP}%
Let $K/K_0$ be a PAC extension, let \eqref{eq:RationalDoubleEP} be a
rational DEP for $K/K_0$, and let $\theta\colon \gal(K)\to
\gal(F/E)$ be a weak solution of the lower embedding problem in
\eqref{eq:RationalDoubleEP}. Then there exists a geometric weak
solution $(\Phi^*,\Phi_0^*)$ of \eqref{eq:RationalDoubleEP}
such that $\theta = \Phi^*$. \\
Moreover, if $r(\bft)\in K(\bft)$ is nonzero, we can choose $\Phi$
such that $\bfa = \Phi(\bft)\in K_0^e$ and $r(\bfa)\neq 0,\infty$.
\end{proposition}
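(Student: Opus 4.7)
My plan is to reduce the statement to the geometric-solution characterization (Proposition~\ref{prop: characterization of solutions}) and then invoke the PAC hypothesis on a single auxiliary regular extension of $K$.

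\textbf{Step 1 (setup via the characterization of geometric solutions).} Apply Proposition~\ref{prop: characterization of solutions} to the lower embedding problem $(\mu,\alpha\colon \gal(F/E)\to \gal(L/K))$ and the given weak solution $\theta$. This produces a finite separable extension $\Ehat/E$ with $\Ehat/K$ regular (in fact $\Ehat\subseteq FM$, where $M$ is the solution field of $\theta$, and $M/K$ is finite Galois since $\gal(F/E)$ is finite) with the property that a place $\phi$ of $E$ unramified in $F$ comes from an extension $\Phi$ realizing $\Phi^*=\theta$ and $\Egag=K$ if and only if $\phi$ extends to a $K$-rational place of $\Ehat$.

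\textbf{Step 2 (applying PAC to $\Ehat/K$).} The extension $\Ehat/K$ is finitely generated and regular, and $\bft$ (which can be taken to be a single transcendental $t$ thanks to the second lemma after the definition of rational DEPs) is a separating transcendence basis for $\Ehat/K$, because $E/K(\bft)$ is separable (as a subextension of the Galois extension $F/K(\bft)$) and $\Ehat/E$ is separable. Hence Proposition~\ref{prop:DefinitionPACextension}\eqref{con a:DefinitionPACextension} applies to the data $(\Ehat/K, \bft, r(\bft))$ and yields a $K$-rational place $\phihat$ of $\Ehat$, unramified over $K(\bft)$, such that $\overline{K_0(\bft)}=K_0$, $\bfa=\phihat(\bft)\in K_0^e$ is finite, and $r(\bfa)\neq 0,\infty$.

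\textbf{Step 3 (assembling the geometric weak solution of the DEP).} Set $\phi=\phihat|_E$. By Step~1 we can extend $\phi$ to a place $\Phi$ of $F$ with $\Phi^*=\theta$ and $\Egag=K$. Let $\Phi_0=\Phi|_{F_0}$. Because $\overline{K_0(\bft)}=K_0$, the induced map $\Phi_0^*$ is a well-defined weak solution of the upper (rational) embedding problem. The identity $\Phi_0^*=\res_{K_s,K_{0s}}\circ \Phi^*$ is immediate from the fact that both are computed from the single place $\Phi$ through the commutative restriction diagram of \eqref{eq:RationalDoubleEP}, so $(\Phi^*,\Phi_0^*)$ is a geometric weak solution of the DEP with $\Phi^*=\theta$, $\Phi(\bft)=\bfa\in K_0^e$, and $r(\bfa)\neq 0,\infty$.

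\textbf{Main obstacle.} The only non-cosmetic difficulty is ensuring simultaneously that (i) the place on $E$ can be lifted to $F$ so as to recover exactly the prescribed $\theta$, and (ii) the same place restricts well to $K_0(\bft)$ with residue field $K_0$, which is what promotes the lower geometric solution to a geometric weak solution of the full DEP. Point (i) is precisely the content of Proposition~\ref{prop: characterization of solutions}, which reduces the problem to finding a single $K$-rational place of $\Ehat$; point (ii) is then built into condition~\eqref{con a:DefinitionPACextension} of Proposition~\ref{prop:DefinitionPACextension}, so both requirements are supplied by one application of the PAC hypothesis to $\Ehat/K$.
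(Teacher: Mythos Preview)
Your proof is correct and follows essentially the same route as the paper: invoke Proposition~\ref{prop: characterization of solutions} to produce the auxiliary regular extension $\Ehat/K$, then use the PAC hypothesis (via Proposition~\ref{prop:DefinitionPACextension}\eqref{con a:DefinitionPACextension}) to find a $K$-rational place of $\Ehat$ with residue field $K_0$ on $K_0(\bft)$, and assemble $(\Phi^*,\Phi_0^*)$ from it. The only point left implicit in both your argument and the paper's is that $\phi=\phihat|_E$ must be unramified in $F$; this is handled by enlarging $r(\bft)$ to vanish on the (finite) branch locus of $F/K(\bft)$ before applying PAC.
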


\begin{proof}
By Proposition~\ref{prop: characterization of solutions} there
exists a finite separable extension $\Ehat/E$ that is regular over
$K$ with the following property. If a place $\phi$ of $E$ that is
unramified in $F$ can be extended to a $K$-rational place of
$\Ehat$, then it can be extended to a place $\Phi$ of $F$ such that
$\Phi^* = \theta$.

By the PACness of $K/K_0$ there exists a $K$-rational place
$\phihat$ of $\Ehat$ such that $\phi = \phihat|_{E}$ is unramified
in $F$, the residue field of $K_0(\bft)$ is $K_0$, $\bfa =
\Phi(\bft)$ is finite, and $r(\bfa)\neq 0,\infty$. If we extend
$\phi$ to the place $\Phi$ of $F$ given above and let
$\Phi_0=\phi|_{F_0}$, then we get that $(\Phi^*,\Phi_0^*)$ is a
geometric weak solution and that $\Phi^*=\theta$.
\end{proof}

The first easy consequence of the lifting property is the
transitivity of PAC extensions. We first prove
Theorem~\ref{thm:transitivity} and then deduce a more general
result.

\begin{proof}[Proof of Theorem~\ref{thm:transitivity}]
Let $K_0\subseteq K_1 \subseteq K_2$ be a tower of separable
algebraic extensions and assume that $K_2/K_1$ and $K_1/K_0$ are PAC
extensions. We need to prove that $K_2/K_0$ is PAC too.

Let
\begin{eqnarray*}
&&((\mu_0\colon \gal(K_0)\to \gal(L_0/K_0),\alpha_0\colon \gal(F_0/K_0(t))\to \gal(L_0/K_0)),\\
&&(\mu_2\colon \gal(K_2)\to \gal(L_2/K_2),\alpha_2\colon \gal(F_2/E)
\to \gal(L_2/K_2)))
\end{eqnarray*}
be a rational finite DEP for $K_2/K_0$. By
Lemma~\ref{prop:ExistGeoSol} it suffices to find a geometric weak
solution to $((\mu_0,\alpha_0),(\mu_2,\alpha_2))$. Set $F_1 = F_0
K_1$, $L_1 = L_0 K_1$. Then, since $K_2/K_1$ is PAC there exists a
weak solution $(\Phi_2^*,\Phi_1^*)$ of the double embedding problem
defined by the lower part of the following commutative diagram.
\begin{equation}
\xymatrix{
    &\gal(K_0)\ar[dr]^{\mu_0}\ar@{.>}[dl]_{\Phi_0^*}\\
\gal(F_0/K_0(t))\ar[rr]^(.4){\alpha_0}
        &&\gal(L_0/K_0)\\
    &\gal(K_1)\ar'[u][uu]\ar[dr]^{\mu_1}\ar@{.>}[dl]_{\Phi_1^*}\\
\gal(F_1/K_1(t))\ar[uu]\ar[rr]^(.4){\alpha_1}
        &&\gal(L_1/K_1)\ar[uu]\\
    &\gal(K_2)\ar'[u][uu]\ar[dr]^{\mu_2}\ar@{.>}[dl]_{\Phi_2^*}\\
\gal(F_2/E)\ar[uu]\ar[rr]^(.4){\alpha_2}
        &&\gal(L_2/K_2).\ar[uu]
}
\end{equation}
Now we lift $\Phi_1^*$ to a geometric weak solution
$(\Phi_1^*,\Phi_0^*)$ of the DEP for $K_1/K_0$ defined by the higher
part of the diagram. This is possible by the lifting property
applied to the PAC extension $K_1/K_0$.

Since $\Phi_0^*|_{\gal(K_2)} = \Phi^*_1|_{\gal(K_2)}=\Phi_2^*$ we
get that $(\Phi_2^*,\Phi_0^*)$ is a geometric weak solution of the
DEP we started from.
\end{proof}

\begin{corollary}\label{cor:transitivityofPACextension}
Let $\kappa$ be an ordinal number and let
\[
K_0 \subseteq K_1 \subseteq K_2 \subseteq \cdots \subseteq K_\kappa
\]
be a tower of separable algebraic extensions. Assume that
$K_{\alpha+1}/K_\alpha$ is PAC for every $\alpha<\kappa$ and that
$K_\alpha = \bigcup_{\beta<\alpha} K_\beta$ for every limit
$\alpha\leq \kappa$. Then $K_\kappa/K_0$ is PAC.
\end{corollary}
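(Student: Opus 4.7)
The plan is transfinite induction on $\kappa$. The base case $\kappa=0$ is vacuous, the successor case is a direct appeal to Theorem~\ref{thm:transitivity}, and the limit case is handled by the polynomial characterization of PAC extensions.

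\medskip

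For the successor step, suppose the result holds for $\alpha$, so that $K_\alpha/K_0$ is PAC. By hypothesis $K_{\alpha+1}/K_\alpha$ is PAC as well, and both $K_{\alpha+1}/K_\alpha$ and $K_\alpha/K_0$ are separable algebraic (being subextensions of the given tower). Then Theorem~\ref{thm:transitivity} applied to $K_0 \subseteq K_\alpha \subseteq K_{\alpha+1}$ yields that $K_{\alpha+1}/K_0$ is PAC.

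\medskip

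For the limit step, suppose $\alpha \leq \kappa$ is a limit ordinal, $K_\alpha = \bigcup_{\beta<\alpha} K_\beta$, and $K_\beta/K_0$ is PAC for every $\beta<\alpha$. To verify that $K_\alpha/K_0$ is PAC, I use condition \eqref{con poly:DefinitionPACextension} of Proposition~\ref{prop:DefinitionPACextension}. Let $f(T,X)\in K_\alpha[T,X]$ be an absolutely irreducible polynomial, separable in $X$, and let $0\neq r(T)\in K_\alpha(T)$. Since $f$ and $r$ have only finitely many coefficients in $K_\alpha$, and the $K_\beta$ form an increasing chain whose union is $K_\alpha$, there exists $\beta<\alpha$ such that $f\in K_\beta[T,X]$ and $r\in K_\beta(T)$. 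Absolute irreducibility over $K_\beta$ is the same as over $K_\alpha$ since both have the same algebraic closure, and separability in $X$ is likewise preserved. Applying the PAC property of $K_\beta/K_0$ produces $(a,b)\in K_0\times K_\beta$ with $f(a,b)=0$ and $r(a)\neq 0$. Since $K_\beta \subseteq K_\alpha$, this pair lies in $K_0\times K_\alpha$, as required.

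\medskip

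The only delicate point is the limit step: one must keep track of the fact that finitely many field elements (the coefficients of $f$ and $r$) can always be swept into a single $K_\beta$ before $\alpha$, and that absolute irreducibility and separability in $X$ are intrinsic, not dependent on the ambient field. Once this is noted, the induction closes immediately and Theorem~\ref{thm:transitivity} handles the rest.
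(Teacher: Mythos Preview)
Your proof is correct. The overall structure matches the paper's: transfinite induction, with the successor step handled by Theorem~\ref{thm:transitivity}. Your treatment of the limit step, however, takes a different and more elementary route. The paper handles a limit ordinal $\alpha$ via the DEP characterization of Proposition~\ref{prop:ExistGeoSol}: given a rational DEP for $K_\alpha/K_0$, the finiteness of $F_0/K_0(t)$ and $L_0/K_0$ guarantees some $\beta<\alpha$ for which the restriction maps $\gal(F_0K_\alpha/K_\alpha(t))\to\gal(F_0K_\beta/K_\beta(t))$ and $\gal(L_0K_\alpha/K_\alpha)\to\gal(L_0K_\beta/K_\beta)$ are isomorphisms, so a geometric weak solution for the DEP over $K_\beta$ (available by induction) transports to one over $K_\alpha$. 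You instead descend directly to the polynomial criterion \eqref{con poly:DefinitionPACextension} of Proposition~\ref{prop:DefinitionPACextension}, pushing the finitely many coefficients of $f$ and $r$ into some $K_\beta$ and invoking the PAC property of $K_\beta/K_0$. Your argument avoids the DEP machinery entirely at the limit stage and is a bit shorter; the paper's version has the virtue of remaining inside the DEP framework it has just built, which is thematically consistent but not logically required here.
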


\begin{proof}
We apply transfinite induction. Let $\alpha\leq \kappa$. If
$\alpha$ is a successor ordinal, then the assertion follows from
Theorem~\ref{thm:transitivity}.

Let $\alpha$ be a limit ordinal. Consider a rational DEP
\begin{equation}\label{eq:transfinitetransitive}
\xymatrix{%
		&&\gal(K_\alpha)\ar[d]\ar[ddr]\\
		&&\gal(K_0)\ar[d]\\
\gal(F_\alpha)\ar[r]\ar@/_15pt/[rrr]
	&\gal(F_0/K_0(\bft))\ar[r]
		&\gal(L_0/K_0)
			&\gal(L_\alpha/K_\alpha)\ar[l]
}%
\end{equation}
Here $F_\alpha = F_0 K_\alpha$ and $L_\alpha = L_0K_\alpha$. Now
since all the extensions are finite, there exists $\beta<\alpha$
such that $\gal(F_\alpha/K_\alpha) \cong \gal(F_\beta/K_\beta)$ and
$\gal(L_\alpha/K_\alpha) \cong \gal(L_\beta/K_\beta)$ (via the
corresponding restriction maps), where $F_\beta = F_0 K_\beta$ and
$L_\beta=L_0K_\beta$.

Induction gives a weak solution of the double embedding problem
\eqref{eq:transfinitetransitive} with $\beta$ replacing $\alpha$.
This weak solution induces a weak solution of
\eqref{eq:transfinitetransitive} via the above isomorphisms.
\end{proof}

\subsection{Strong Lifting Property for PAC Extensions of Finitely
Generated Fields} Let $K_0$ be a finitely generated field (over its
prime field). In this section we prove a strong lifting property for
PAC extensions $K/K_0$. The additional ingredient is the Mordell
conjecture for finitely generated fields (now a theorem due to
Faltings in characteristic $0$ \cite{Faltings83} and to
Grauert-Manin in positive characteristic \cite{Samuel66}, page~107).

The following lemma is based on the Mordell conjecture.

\begin{lemma}[{\cite{JardenRazon1994}, Proposition~5.4}]\label{lem:Mordell}
Let $K_0$ be a finitely generated infinite field, $f\in K_0[T,X]$
an absolutely irreducible polynomial which is separable in $X$,
$g\in K_0[T,X]$ an irreducible polynomial which is separable in $X$,
and $0\ne r\in K_0[T]$. Then there exist a finite purely inseparable
extension $K_0'$ of $K_0$, a nonconstant rational function $q\in
K_0'(T)$, and a finite subset $B$ of $K_0'$ such that $f(q(T),X)$ is
absolutely irreducible, $g(q(a),X)$ is irreducible in $K_0'[X]$, and
$r(q(a))\ne 0$ for any $a\in K_0'\smallsetminus B$.
\end{lemma}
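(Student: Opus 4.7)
The plan is to produce $q$ of sufficiently large degree and then invoke the Mordell conjecture (Faltings in characteristic zero, Grauert--Manin in positive characteristic), which asserts that every smooth projective curve of genus at least $2$ over a finitely generated infinite field has only finitely many rational points. I would introduce $K_0'/K_0$ purely to guarantee that $q$ can be chosen as a separating element of $K_0'(T)$ in positive characteristic; for example, $K_0' = K_0^{1/p^m}$ for suitable $m$ would suffice.

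For the condition on $f$, I would take $q \in K_0'[T]$ of sufficiently large degree $N$ with generic coefficients (e.g.\ $q(T) = T^N$ for $N$ coprime to the characteristic). The pullback of the absolutely irreducible plane curve $\{f(U,X)=0\}$ along the cover $q\colon \bbA^1_T\to \bbA^1_U$ stays absolutely irreducible: otherwise $q$ would have to factor through a proper intermediate cover responsible for a splitting of $f$, which the choice of $N$ rules out.

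For the condition on $g$, let $y$ be a root of $g(U,X)=0$ in $\widetilde{K_0'(U)}$ and let $\Omega$ be the Galois closure of $K_0'(U,y)/K_0'(U)$, with Galois group $G$ acting transitively on the roots of $g(U,X)$. A standard Galois-theoretic argument shows that $g(q(a),X)$ is reducible over $K_0'$ precisely when some extension of the place $T\mapsto a$ to $\Omega$ has decomposition group contained in a proper subgroup of $G$ which meets every coset of the stabilizer of $y$. There are only finitely many such subgroups $H$, and to each corresponds an intermediate smooth projective curve $C_H$ over $K_0'$. The base change $T\mapsto q(T)$ replaces each $C_H$ by a pullback whose genus, by Riemann--Hurwitz, grows with $N$; for $N$ large enough every one of these finitely many pullbacks has genus at least $2$. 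Mordell then bounds the troublesome $a\in K_0'$ by a finite set, which together with the finitely many zeros of $r(q(T))$ in $K_0'$ forms the desired exceptional set $B$.

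The main obstacle will be controlling the genera of all the pullback curves simultaneously, and reconciling the ramification introduced by $q$ with the existing ramification of $g$ over $K_0'(U)$; these technicalities are precisely what forces the combined use of Riemann--Hurwitz growth estimates and the purely inseparable enlargement $K_0'$.
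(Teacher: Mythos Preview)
The paper does not prove this lemma at all; it is quoted from \cite{JardenRazon1994}, Proposition~5.4, with only the one-line remark that it ``is based on the Mordell conjecture.'' Your sketch correctly identifies that ingredient and the overall architecture --- reducibility of $g(q(a),X)$ is detected by $K_0'$-rational points on finitely many auxiliary covers of the line, and a suitable $q$ pushes their genera past~$1$ via Riemann--Hurwitz so that Faltings/Grauert--Manin finishes the job --- so at the level of strategy you are aligned with the cited source.

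There is, however, a concrete slip in your handling of the absolute irreducibility of $f(q(T),X)$. Your parenthetical choice $q(T)=T^N$ already fails for $f(T,X)=X^2-T$ and $N=2$, where $f(T^2,X)=(X-T)(X+T)$; a degree condition on $q$ alone does not force the fibre product of the cover $\{f=0\}\to\bbA^1$ with $q\colon\bbA^1\to\bbA^1$ to be irreducible. What is actually needed is that the function-field extension cut out by $q$ be linearly disjoint (over $\Kgal(T)$) from the one cut out by $f$, and this has to be arranged by the \emph{construction} of $q$, not by its degree. In \cite{JardenRazon1994} the rational function $q$ is not taken generically but is produced so that the curve $f(q(T),X)=0$ is absolutely irreducible by design, and the same construction simultaneously drives up the genera of the curves controlling $g$; your sketch decouples these two tasks and gets the first one wrong. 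The ``generic coefficients'' remark is closer to the truth, but you then need an argument (e.g.\ via Hilbert irreducibility over $K_0'$, or an explicit ramification analysis) that a \emph{specific} $q\in K_0'(T)$ with the required properties exists --- and that argument is where the purely inseparable extension $K_0'$ genuinely enters, not merely to make $q$ separating.
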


Let $K/K_0$ be an extension. Consider a rational double embedding
problem \eqref{eq:RationalDoubleEP} for $K/K_0$ (with $\bft=t$). For
any subextension $K_1$ of $K/K_0$ we have a corresponding rational
double embedding problem. Namely

\begin{equation}
\label{eq:CorresponingRegularDoubleEP}%
\xymatrix{%
    &&\gal(K)\ar[d]\ar[ddr]^{\mu}\\
    &&\gal(K_1)\ar[d]^{\mu_1}\\
\gal(F/E)\ar@/_10pt/[rrr]_{\alpha}\ar[r]
	&\gal(F_1/K_1(t))\ar[r]^{\alpha_1}
		&\gal(L_1/K_1)
    	    		&\gal(L/K),\ar[l]
}%
\end{equation}
where $F_1 = F_0 K_1$, $L_1=L_0K_1$, and $\mu_1$ and $\alpha_1$ are
the restriction maps.

Assume that  $K_1'/K_1$ is a purely inseparable extension.
Then the double embedding problem
\eqref{eq:CorresponingRegularDoubleEP} remains the same if we
replace all fields by their compositum with $K_1'$.

\begin{proposition}[Strong lifting property]
Let $K$ be a PAC extension of a finitely generated field $K_0$. Let
\[
\calE(K)  = (\mu\colon \gal(K) \to \gal(L/K), \alpha \colon
\gal(F/E)\to \gal(L/K))
\]
be a geometric embedding problem and let $\theta\colon \gal(K) \to
\gal(F/E)$ be a weak solution of $\calE(K)$. Then there exist a
finite subextension $K_1/K_0$ and a finite purely inseparable
extension $K_1'/K_1$ satisfying the following properties.
\begin{enumerate}
\item For any rational double embedding problem \eqref{eq:CorresponingRegularDoubleEP} for $K/K_1$
whose lower embedding problem is $\calE(K)$, we can lift $\theta$ to
a weak solution $(\theta,\theta_1)$ of the double embedding problem
\eqref{eq:CorresponingRegularDoubleEP} in such a way that $\theta_1$
is surjective.
\item
The solution $(\theta,\theta_1)$ is a geometric solution of the
double embedding problem that we get from
\eqref{eq:CorresponingRegularDoubleEP} by replacing all fields with
their compositum with $K_1'$.
\end{enumerate}
\end{proposition}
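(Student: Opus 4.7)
The plan is to combine the basic lifting property (Proposition~\ref{prop:ExtensionSoltoGeoSol_DEP}) with the Mordell--Faltings input afforded by Lemma~\ref{lem:Mordell}, bridged by the characterization of geometric weak solutions in Proposition~\ref{prop: characterization of solutions}. The overall idea is: descend $\calE(K)$ and $\theta$ to a finite subextension $K_1/K_0$ inside $K$; apply Lemma~\ref{lem:Mordell} to choose a finite purely inseparable extension $K_1'/K_1$ over which a key specialization preserves absolute irreducibility; and for each DEP, produce the required lift by combining the PAC property of $K/K_0$ with the irreducibility supplied by Lemma~\ref{lem:Mordell}.

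First, I would descend the data. By Theorem~\ref{thm:nonalgebraicPAC} I may assume $K/K_0$ is separable algebraic, so $K$ is a filtered union of its finite $K_0$-subextensions. The objects $E/K$, $F/E$, $L/K$, the weak solution $\theta$, and its solution field involve only finitely many algebraic parameters; choosing $K_1/K_0$ finite inside $K$ large enough, there exist models $E^\natural/K_1$ finitely generated regular, $F^\natural/E^\natural$ finite Galois, and $L^\natural/K_1$ finite Galois with $E = E^\natural K$, $F = F^\natural K$, $L = L^\natural K$, and matching Galois groups via restriction. Enlarging $K_1$ further, I may assume that the field $\Ehat/E$ supplied by Proposition~\ref{prop: characterization of solutions} (whose $K$-rational places correspond to geometric realizations of $\theta$) descends to a finite separable extension $\Ehat^\natural/E^\natural$ regular over $K_1$.

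Next, I would apply Lemma~\ref{lem:Mordell} to the finitely generated infinite field $K_1$: take $f \in K_1[T,X]$ an absolutely irreducible polynomial (separable in $X$) defining $\Ehat^\natural$ over $K_1(T)$, take $g \in K_1[T,X]$ an irreducible polynomial defining a Galois closure that captures the upper Galois structure relevant to surjectivity, and take a suitable nonzero $r \in K_1[T]$. The lemma yields a finite purely inseparable $K_1'/K_1$, a nonconstant rational $q \in K_1'(T)$, and a finite set $B \subset K_1'$ such that for every $a \in K_1' \smallsetminus B$, both $f(q(a),X)$ remains absolutely irreducible and $g(q(a),X)$ remains irreducible over $K_1'$. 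Now, given any rational DEP \eqref{eq:CorresponingRegularDoubleEP} for $K/K_1$ with lower $\calE(K)$, I would base-change the upper data by $K_1'$; since $K_1'/K_1$ is purely inseparable, all Galois groups of the DEP are unchanged. Applying the basic lifting property (Proposition~\ref{prop:ExtensionSoltoGeoSol_DEP}) to the PAC extension $K/K_0$ together with the substitution $t \mapsto q(T)$ and the auxiliary function $r$ enforcing $T \mapsto a \in K_1' \smallsetminus B$ produces a $K$-rational place of $\Ehat^\natural K$, and hence, by Proposition~\ref{prop: characterization of solutions}, a place $\Phi$ of $F^\natural \Ehat^\natural K_1'$ for which $(\Phi|_{F})^* = \theta$ is realized geometrically over $KK_1'/K_1'$. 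The irreducibility of $g(q(a),X)$ then forces the decomposition group of the extension of $\Phi$ to $F_1 K_1'$ to be the full group $\gal(F_1/K_1(t))$, so by Lemma~\ref{lem:criterionforproperness} the induced weak solution $\theta_1$ is surjective, yielding~(1); part~(2) follows from the construction via a place.

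The main obstacle is that the polynomial $g$ of Lemma~\ref{lem:Mordell} appears to depend on the varying $F_1$, whereas $K_1'$ must be chosen once for all DEPs. The resolution is that the lower compatibility $F_1 K = F$ constrains $F_1$ to be a $K_1$-model of the fixed $F$, so a universal $g$ extracted from the descended data controls the irreducibility for every admissible $F_1$; any residual DEP-dependent bad locus is finite and can be avoided by the abundance of valid specializations $a \in K_1' \smallsetminus B$ furnished by the PAC property.
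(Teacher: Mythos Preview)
Your approach is essentially the paper's: obtain $\Ehat$ from Proposition~\ref{prop: characterization of solutions}, descend the data to a finite subextension $K_1$, take $f$ defining $\Ehat$ over $K_1(t)$ and $g$ defining $F_1/K_1(t)$, feed $f,g,r$ into Lemma~\ref{lem:Mordell} to get $K_1',q,B$, and then use PAC to find $a\in K_1'\smallsetminus B$ and $b$ with $f(q(a),b)=0$; irreducibility of $g(q(a),X)$ forces $\theta_1$ to be surjective.

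Two points where you diverge from the paper deserve correction. First, the quantifier issue you flag at the end is real, but your resolution via a ``universal $g$'' is not what the paper does and is not justified: different $K_1$-forms $F_1$ with $F_1K=F$ need not be controlled by a single irreducible polynomial. In the paper's proof, $g$ is chosen \emph{after} $F_1$ is specified by the DEP, and consequently $K_1'$, $q$, $B$ depend on the DEP as well; only $K_1$ is fixed in advance (so that $\gal(L_1/K_1)\cong\gal(L/K)$). You should simply follow that order rather than attempt a uniform $g$. Second, at the specialization step you should not invoke Proposition~\ref{prop:ExtensionSoltoGeoSol_DEP} for $K/K_0$: what is actually needed is a $K'$-point on the absolutely irreducible curve $f(q(T),X)=0$ with $T$-coordinate in $K_1'\smallsetminus B$, which comes from the PAC property of $K'=KK_1'$ over $K_1'$ (this holds by \cite{JardenRazon1994}, Corollary~2.5), applied directly as in Proposition~\ref{prop:DefinitionPACextension}. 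The lifting property for $K/K_0$ would land you in $K_0$, not in $K_1'$.
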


\begin{proof}
By Proposition~\ref{prop: characterization of solutions} there
exists a finite separable $\Ehat/E$ that is regular over $K$ such
that a $K$-place $\phi$ of $E$ that is unramified in $F$ satisfies
$\phi^*=\theta$ if and only if $\phi$ extends to a $K$-rational
place of $\Ehat$. Let $f(t,X)\in K[t,X]$ be an absolutely
irreducible polynomial whose root $x$ generates $\Ehat/K(t)$, i.e.\
$\Ehat = K(t,x)$. Let $M$ be the fixed field of $\ker(\theta)$ in
$K_s$. Then $M/K$ is a finite Galois extension. Let $h(X)\in K[X]$
be a Galois irreducible polynomial whose root generates $M/K$.

Let $K_1$ be a finite subextension of $K/K_0$ that contains the
coefficients of $f$ and $h$ and such that $h$ is Galois over it. Let
$M_1$ be the splitting field of $h$ over $K_1$ and let $L_1$, $F_1$
be as in the corresponding rational double embedding problem
\eqref{eq:CorresponingRegularDoubleEP}. Then $\gal(M/K)\cong
\gal(M_1/K_1)$, and thus also $\gal(L/K)\cong \gal(L_1/K_1)$.

\[
\xymatrix@R=10pt{
            &&&F\\
    &E \ar@{-}[r]
        &EL\ar@{-}[rr]\ar@{-}[ru]
                &&EM\\
    &K(t) \ar@{-}[r]\ar@{-}[u]
        &L(t)\ar@{-}[rr]\ar@{-}[u]
                &&M(t)\ar@{-}[u]\\
            &&&F_1\ar@{-}'[u]'[uu][uuu]\\
K_0(t)\ar@{-}[r]
    &K_1(t) \ar@{-}[r]\ar@{-}[uu]
        &L_1(t)\ar@{-}[rr]\ar@{-}[uu]\ar@{-}[ur]
                &&M_1(t)\ar@{-}[uu]
    }
\]

Let $g(t,X)\in K_1[T,X]$ be an irreducible polynomial whose root
generates $F_1/K_1(t)$. Choose $r(t)\in K_1(t)$ such that $r(a)\neq
0$ implies that the prime $(t-a)$ is unramified in $F_1$ and that
the leading coefficients of $f(t,X)$ and $g(t,X)$ do not vanish at
$a$. Let $K_1'/K_1$ be the purely inseparable extension, $B\subseteq
K_1'$ the finite subset, and $q\in K_1'(T)$ the nonconstant rational
function that Lemma~\ref{lem:Mordell} gives for $K_1$, $g$, $f$, and
$r$. Let $K' = KK_1'$.

Since $K'/K_1'$ is PAC (\cite{JardenRazon1994}, Corollary~2.5) there
exist $a\in K_1'\smallsetminus B$ and $b\in K'$ for which
$f(q(a),b)=0$ (Proposition~\ref{prop:DefinitionPACextension}).
Extend $t\mapsto q(a)$ to a $K'$-rational place $\phihat$ of $\Ehat
K_1'$. Then $\phi=\phihat|_{E K_1'}$ is unramified in $FK_1'$ (since
$r(q(a))\neq 0$).

By Proposition~\ref{prop: characterization of solutions} $\phi$
extends to a place $\Phi$ of $FK_1'$ such that $\Phi^* = \theta$ and
write $\Phi_1 = \Phi|_{F_1}$. Then $(\Phi^*,\Phi_1^*)$ is a
geometric weak solution of the DEP $((\mu,\alpha),(\mu_1,\alpha_1))$
that we get from \eqref{eq:CorresponingRegularDoubleEP} by replacing
all fields with their compositum with $K_1'$.

Moreover, since $F_1K_1'/K_1'(t)$ is generated by $g(t,X)$ and
$g(q(a),X)$ is irreducible, we get that $\Phi_1^*$ is surjective.
This proves (b). Now assertion (a) follows since $(\Phi^*,\Phi_1^*)$
is a (not necessarily geometric) solution of
$((\mu,\alpha),(\mu_1,\alpha_1))$.
\end{proof}

\section{The Galois Closure of PAC Extensions}
This section proves the main result of the paper,
Theorem~\ref{thm:intmineGalois}, that says the Galois closure of a
proper separable algebraic PAC extension is the separable closure.
The proof uses the lifting property and some properties of realizing
wreath products in fields. We start by recalling the latter and then
prove the theorem.

\subsection{Wreath Products in Fields}
Let $A$ and $G$ be finite groups. The \textbf{wreath product} $A\wr
G$ is defined to be the semidirect product $A^G \rtimes G$, where
$G$ acts on $A^G$ by translation. More precisely, $A^G = \{ f\colon
G\to A\}$ and
\[
\qquad f^\sigma(\tau) = f(\sigma \tau)
\]
for all $f\in A^G$ and $\sigma,\tau \in G$. Then each element of
$A\wr G$ can be written uniquely as $f \sigma$, $f\in A^G$ and
$\sigma\in G$ and the multiplication is given by
\[
(f\sigma) (g\tau) = f g^{\sigma^{-1}} \sigma\tau.
\]
The wreath product is equipped with the quotient map $\alpha \colon
A\wr G \to G$ defined by $\alpha(f\sigma) =  \sigma$.
The following lemmas describe two basic facts on embedding problems
with wreath products.
\begin{lemma}\label{lem:criterionforpropernesswreath}
Let $(\nu\colon \Gamma\to G, \alpha\colon A\wr G \to G)$ be a finite
embedding problem for a profinite group $\Gamma$, assume $G\neq 1$,
and let $\theta\colon \Gamma\to A\wr G$ be a weak solution. Then
the only subgroup of $A^1$ that is normal in
$\theta(\Gamma)$ is the trivial subgroup.
\end{lemma}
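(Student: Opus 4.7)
The plan is as follows. Identify $A^G$ with the internal direct product $\prod_{\tau\in G} A_\tau$ of coordinate copies, where $A_\tau=\{f\in A^G: \mathrm{supp}(f)\subseteq\{\tau\}\}$; then $A^1$ in the statement is $A_1$. A short unwinding of the multiplication rule $(f\sigma)(g\tau)=fg^{\sigma^{-1}}\sigma\tau$ together with the definition $f^{\sigma}(\tau)=f(\sigma\tau)$ shows that for $g\in A_{\tau_0}$ one has $\sigma g\sigma^{-1}=g^{\sigma^{-1}}\in A_{\sigma\tau_0}$, and conjugation by the coordinate factor $f\in A^G$ preserves each $A_{\sigma\tau_0}$ because $A^G$ is a direct product. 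Combining these, conjugation by an arbitrary $f\sigma\in A\wr G$ sends $A_{\tau_0}$ into $A_{\sigma\tau_0}$; in particular it sends $A_1$ into $A_\sigma$.

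Next, I would exploit the hypothesis that $\theta$ is a weak solution, i.e.\ $\alpha\theta=\nu$. Since $\nu$ is an epimorphism, $\alpha(\theta(\Gamma))=G$, so for every $\sigma\in G$ there exists $\gamma\in\Gamma$ with $\theta(\gamma)=f\sigma$ for some $f\in A^G$.

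Finally, let $N\leq A_1$ be a subgroup normalized by $\theta(\Gamma)$. Using $G\neq 1$, pick any $\sigma\in G$ with $\sigma\neq 1$ and a corresponding $\gamma$ with $\theta(\gamma)=f\sigma$. The normality of $N$ gives
\[
N=\theta(\gamma)\,N\,\theta(\gamma)^{-1}\subseteq A_\sigma,
\]
by the first paragraph, whence $N\subseteq A_1\cap A_\sigma=1$ since $A_1$ and $A_\sigma$ are distinct factors of the internal direct product $A^G$. Thus $N=1$, as desired.

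No serious obstacle is expected; the only step that requires care is the coordinate-wise bookkeeping for conjugation in $A\wr G$, which is a routine unwinding of the wreath product multiplication. The essential use of the hypothesis $G\neq 1$ enters precisely when choosing $\sigma\neq 1$ in the last paragraph, and the use of the hypothesis that $\theta$ is a weak solution (as opposed to an arbitrary homomorphism) enters in guaranteeing a preimage of such $\sigma$.
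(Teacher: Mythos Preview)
Your proof is correct and follows essentially the same approach as the paper's: both pick $\sigma\neq 1$, lift it to $\theta(\gamma)=f\sigma$ via surjectivity of $\nu$, and observe that conjugation by $f\sigma$ carries $A^1$ into $A^\sigma$, forcing $N\subseteq A^1\cap A^\sigma=1$. The only difference is that you spell out the coordinate bookkeeping for the conjugation a bit more explicitly than the paper does.
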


\begin{proof}
For each $\sigma \in G$ choose $\gamma \in \Gamma$ such that
$\nu(\gamma) = \sigma$, and let $f_\sigma = \theta (\gamma)
\sigma^{-1}$, i.e.\ $\theta(\gamma) = f_\sigma \sigma$.

Assume that $B\leq A^1$ is normal in $\theta(\Gamma)$. Let $1\neq
\sigma \in G$. Then since
\[
B = B^{f_\sigma \sigma} \leq (A^1)^{f_\sigma\sigma } = A^\sigma
\]
we have
\[
B \leq A^1 \cap A^\sigma = 1,
\]
and hence $B=1$.
\end{proof}

\begin{lemma} \label{lem:rational_wreath}
Let $K_0 \subseteq L$ be a finite Galois extension with Galois
group $G = \gal(L/K_0)$ and let $\mu\colon \gal(K_0) \to G$ be the
restriction map. Let $A$ be a finite group that is regular over
$K_0$. Then the embedding problem
\[
(\mu \colon \gal(K_0) \to G, \alpha\colon A\wr G \to G)
\]
is rational.
\end{lemma}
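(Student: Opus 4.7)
The plan is to construct a Galois extension $F_0/K_0(\bfu)$ with Galois group $A\wr G$ and $F_0\cap K_{0s}=L$, where $\bfu$ is a tuple of algebraically independent elements over $K_0$, so that restriction to $L$ realizes the projection $\alpha\colon A\wr G\to G$. Since $A$ is regular over $K_0$, fix a Galois extension $F_1=K_0(s,x)/K_0(s)$ of group $A$ with $F_1$ regular over $K_0$, where $x$ satisfies an irreducible polynomial $f(s,X)\in K_0[s,X]$.

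Introduce variables $\bft=(t_\sigma)_{\sigma\in G}$ and, for each $\sigma$, a root $x_\sigma$ of $f(t_\sigma,X)$ in an algebraic closure of $L(\bft)$. Put $F_\sigma=L(\bft,x_\sigma)$ and $F_0=L(\bft,\mathbf{x})$ with $\mathbf{x}=(x_\sigma)_\sigma$. Using that $F_1/K_0$ is regular and that the $t_\sigma$ are algebraically independent, each $F_\sigma/L(\bft)$ is a regular $A$-extension, the $F_\sigma$ are linearly disjoint over $L(\bft)$, and so $\gal(F_0/L(\bft))\cong A^G$ and $F_0$ is regular over $L$; in particular $F_0\cap K_{0s}=L$.

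Now I let $\rho\in G$ act on $F_0$ by the given Galois action on $L$ together with $\rho(t_\sigma)=t_{\rho\sigma}$ and $\rho(x_\sigma)=x_{\rho\sigma}$; this is well defined since $f$ has coefficients in $K_0$. A direct check shows that under $\gal(F_0/L(\bft))\cong A^G$ the conjugation action by $\rho$ sends $\gamma=(\gamma_\sigma)$ to $(\gamma_{\rho^{-1}\sigma})$, matching the wreath product formulas given in the paper. Hence $A\wr G\leq \mathrm{Aut}(F_0/L(\bft)^G)$, and the degree equality $[F_0:L(\bft)^G]=|A|^{|G|}\cdot|G|=|A\wr G|$ forces $\gal(F_0/L(\bft)^G)=A\wr G$, with the natural projection onto $\gal(L/K_0)$ coinciding with $\alpha$.

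The remaining, and main, obstacle is to establish that the fixed field $L(\bft)^G$ is purely rational over $K_0$, so that after renaming variables the embedding problem becomes rational in the sense of the paper; this is a Hilbert 90 / Speiser-type fact. Fix a normal basis $(\ell_\tau)_{\tau\in G}$ of $L/K_0$, so $\rho(\ell_\tau)=\ell_{\rho\tau}$, and set $u_\sigma=\sum_\tau \ell_{\tau\sigma}t_\tau$. A relabeling of the summation gives $\rho(u_\sigma)=u_\sigma$, and the transition matrix $(\ell_{\tau\sigma})$ over $L$ is invertible (its columns permute a normal basis), so $L[\bft]=L[\bfu]$ and $K_0(\bfu)\subseteq L(\bft)^G$. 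Linear disjointness of the algebraic $L$ and the purely transcendental $K_0(\bfu)$ yields $[L(\bft):K_0(\bfu)]=[L:K_0]=|G|$, which equals $[L(\bft):L(\bft)^G]$, forcing $L(\bft)^G=K_0(\bfu)$. This exhibits the embedding problem as rational, completing the proof.
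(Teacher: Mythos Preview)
Your proof is correct and follows the same construction as the paper, which simply quotes \cite{Haran1999InventMath}, Lemma~3.1; you have unpacked that lemma in the special case $G_0=1$, making the argument self-contained. One minor point: the invertibility of $(\ell_{\tau\sigma})$ does not follow merely from its columns being permutations of the normal basis, but it does hold because $(\ell_{\tau\sigma})=(\tau(\ell_\sigma))$ is the matrix of conjugates of a $K_0$-basis of $L$, whose determinant squares to the nonzero discriminant of that basis.
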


\begin{proof}
Since $A$ is regular over $K_0$ there exists an absolutely
irreducible polynomial $f(T,X) \in K_0[T,X]$ that is Galois over
$K(T)$ and $\gal(f(T,X), K(T)) \cong A$.

Choose a basis $c_1, \ldots, c_n$ of $L/K_0$ and let $\bft = (t_1,
\ldots , t_n)$ be an $n$-tuple of variables. By
\cite{Haran1999InventMath}, Lemma~3.1 (with $L_0 = L$ and $G_0 =1$)
there exist a field $\Fhat$ such that
\begin{enumerate}
\item $\Fhat$ is regular over $L_0$,
\item $\gal(\Fhat/K_0(\bft)) \cong A\wr G$ and under this
identification $\alpha \colon A\wr G \to G$ coincides with the
restriction map $\gal(\Fhat/ K_0(\bft)) \to \gal(L_0/K_0)$.
\end{enumerate}
In particular we get that $(\mu,\alpha)$ is rational, as claimed.
\end{proof}

\begin{remark}
One can formulate and prove a much more general result than the
above lemma. This generalization considers a split embedding problem
$A\rtimes G_0\to G_0$ instead of $A$, and gives a rational embedding
problem with the twisted wreath product $A\wr_{G_0} G$ instead of
$A\wr G$.

The proof of this generalization is a bit more technical, but still
it uses only \cite{Haran1999InventMath}, Lemma~3.1. We will not use the
generalization here, so we decided to omit it. For the full version
see \cite{Bary-SorokerPhD}.
\end{remark}

\subsection{Proof of Theorem~\ref{thm:intmineGalois}}
Let $K/K_0$ be a proper separable PAC extension. We need to prove
that the Galois closure of $K/K_0$ is $K_{0s}$.

We break the proof into two steps.

\textbf{Step A.} \textit{If $K/K_0$ is a proper Galois PAC
extension, then $K = K_{0s}$}.\\
Let $K_0\subsetneq L\subseteq K$ be a finite Galois extension with a
Galois group $G = \gal(L/K_0)$. Let $N/K$ be a finite Galois
extension with Galois group $B = \gal(N/K)$. It suffices to show
that $B = 1$.

Identify $B$ with a subgroup of $A = S_n$, for some sufficiently
large $n$. Let $\nu \colon \gal(K_0)\to G$ and $\theta \colon
\gal(K) \to B\leq A$ be the restriction maps. Since $A$ is regular
over any field \cite{Lang2002}, p.~272, Example~4, and in particular
over $K_0$, we get that the embedding problem $(\mu,\alpha)$ is
rational (Lemma~\ref{lem:rational_wreath}). Hence, by definition,
the double embedding problem
\[
\xymatrix{%
		&&\gal(K)\ar[ddr]\ar[ddll]_{\theta}\ar[d]\\
		&& \gal(K_0)\ar[d]\ar@{.>}[dl]_{\theta_0}\\
A^{1}\ar[r]\ar@/_10pt/[rrr]
	&A\wr G\ar[r]
		&G
			&1.\ar[l]
}%
\]
is rational. By the lifting property
(Proposition~\ref{prop:ExtensionSoltoGeoSol_DEP}) we can extend the
weak solution $\theta$ of the lower embedding problem to a weak
solution $(\theta,\theta_0)$ of the double embedding problem. Now
since $\gal(K)\normal \gal(K_0)$ we have $B = \theta_0(\gal(K))
\normal \theta_0(\gal(K_0))$. Thus $B=1$
(Lemma~\ref{lem:criterionforpropernesswreath}), as needed.

\textbf{Step B.} \textit{The general case}. Let $M$ be the Galois
closure of $K/K_0$. We need to show that $M = K_{0s}$. By
Theorem~\ref{thm:intSplitting} (which was proved in \cite{Razon2000}
and will be reproved below) there exists $M_0/K_0$ that is linearly
disjoint from $K$ over $K_0$ and such that $M = KM_0$.
\[
\xymatrix@R=10pt@C=10pt{%
K\ar@{-}[r]& M\\
K_0\ar@{-}[u]\ar@{-}[r]&M_0\ar@{-}[u]
}%
\]
In particular $M/M_0$ is a proper Galois extension. By
\cite{JardenRazon1994}, Lemma~2.1, $M/M_0$ is PAC. We get from the
first step that $M = K_{0s}$. \hfill \qed

\subsection{PAC fields being non-PAC over any proper subfield}
The following observation follows directly from
Theorem~\ref{thm:intmineGalois}.
\begin{lemma}
Let $N\neq \tilde \bbQ$ be a Galois extension of $\bbQ$. Then $N$ is
a PAC extension of no proper subfield.
\end{lemma}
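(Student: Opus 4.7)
The plan is to reduce directly to the ``in particular'' clause of Theorem~\ref{thm:intmineGalois}. Suppose for contradiction that there exists a proper subfield $K_0 \subsetneq N$ such that $N/K_0$ is PAC. Any subfield of $N$ contains the prime field $\bbQ$, so we have a tower $\bbQ \subseteq K_0 \subsetneq N$.

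First I would verify that $N/K_0$ is a \emph{proper Galois} extension. Since $N/\bbQ$ is Galois, $N$ is a splitting field over $\bbQ$ (hence normal over $\bbQ$) and in particular normal over the intermediate field $K_0$. Separability is automatic in characteristic $0$. Thus $N/K_0$ is Galois, and it is proper by the hypothesis $K_0 \neq N$. This verification is immediate and does not require any of the machinery developed in the paper.

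Now I would apply the ``in particular'' clause of Theorem~\ref{thm:intmineGalois} to the proper Galois PAC extension $N/K_0$: since $N \neq K_0$, we must have $N = K_{0s}$. To finish, I would observe that $N \subseteq \tilde{\bbQ}$ (as $N/\bbQ$ is algebraic), so every element of $K_0 \subseteq N$ is algebraic over $\bbQ$; hence $K_0 \subseteq \tilde{\bbQ}$, and therefore $K_{0s} = \tilde{K_0} = \tilde{\bbQ}$ (we are in characteristic $0$). This gives $N = \tilde{\bbQ}$, contradicting the standing hypothesis $N \neq \tilde{\bbQ}$.

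There is no real obstacle here: the lemma is essentially a direct specialization of Theorem~\ref{thm:intmineGalois} to the arithmetic setting, exploiting the fact that in a characteristic-$0$ Galois extension every intermediate field is automatically a Galois base and has separable closure equal to $\tilde{\bbQ}$. The only point deserving a sentence of care is the remark that any subfield of $N$ necessarily contains $\bbQ$, so the hypothesis ``proper subfield'' is exactly what is needed to land in the non-trivial case of Theorem~\ref{thm:intmineGalois}.
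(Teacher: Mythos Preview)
Your proof is correct and follows exactly the paper's approach: observe that any subfield $K_0$ of $N$ contains $\bbQ$, so $N/K_0$ is Galois, and then apply Theorem~\ref{thm:intmineGalois}. The paper's proof is simply a more terse version of what you wrote, omitting the explicit verification that $K_{0s}=\tilde{\bbQ}$.
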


\begin{proof}
If $K$ is a proper subfield of $N$, then $\bbQ \subseteq K$. In
particular $N/K$ is Galois, and hence Theorem~\ref{thm:intmineGalois}
implies that $N/K$ is not PAC.
\end{proof}

Some Galois extension of $\bbQ$ are known to be PAC as fields.
Hence we get examples of PAC fields which are PAC extensions of no
proper subfield.
\begin{description}
\item[Example i] The Galois hull $\bbQgal[\bfsigma]$ of $\bbQ$ in
$\bbQgal(\bfsigma)$, for almost all $\bfsigma\in \gal(\bbQ)^e$
\cite{FriedJarden2005}, Theorem~18.10.2.
\item[Example ii]
$\bbQ_{\textnormal{tr}}(i)$, where $\bbQ_{\textnormal{tr}}$ is the
maximal real Galois extension of $\bbQ$ and $i^2 = -1$
\cite{Pop1996}.
\item[Example iii]
The compositum $\bbQ_{\textnormal{sym}}$ of all Galois extensions of
$\bbQ$ with a symmetric Galois group \cite{FriedJarden2005},
Theorem~18.10.3.
\end{description}

Over a finite field any infinite algebraic extension is PAC \cite{FriedJarden2005}, Corollary~11.2.4. Thus we get 
\begin{description}
\item[Example iv]
Let $N$ be an infinite extension of a finite field $\bbF_p$ which is not algebraically closed. Then $N$ is a PAC field. However $N$ is Galois over any subfield (since $\gal(\bbF_p)=\hat{\bbZ}$ is abelian). 
Hence, by Theorem~\ref{thm:intmineGalois}, $N$ is a PAC extension of no proper subfield. 
\end{description}

\subsection{Finite PAC Extensions -- Proof of Theorem~\ref{thm:finitePACext}}
Let $K/K_0$ be a finite extension. We need to prove that $K/K_0$ is
PAC if and only if either $K_0$ is real closed and $K$ is its
algebraic closure or $K_0$ is a PAC field and $K/K_0$ is a finite
purely inseparable extension.

Since an algebraically closed field is PAC over any infinite subset
of it we have that indeed $K_0$ is real closed and $K$ is its
algebraic closure implies that $K/K_0$ is PAC. Moreover $[K:K_0] =
2$ (Artin-Schreier Theorem \cite{Lang2002}, VI\S Corollary~9.3).

Let $K_0$ be PAC and $K/K_0$ a finite purely inseparable extension.
Then \cite{JardenRazon1994}, Corollary~2.3, asserts that $K/K_0$ is
PAC.

For the other direction, assume that $K/K_0$ is a finite PAC
extension. Let $K_1$ be the maximal separable extension of $K_0$
contained in $K$. Then $K/K_1$ is purely inseparable
\cite{Lang2002}, V\S6 Proposition~6.6. By \cite{JardenRazon1994},
Corollary~2.3, $K_1/K_0$ is PAC, and in particular $K_1$ is a PAC
field. If $K_1 = K_0$, we are done, since $K/K_0$ is then purely
inseparable.

Assume $K_1\neq K_0$. By Theorem~\ref{thm:intmineGalois}, the Galois
closure $N$ of $K_1/K_0$ is the separable closure. Hence, by
Artin-Schreier Theorem \cite{Lang2002}, VI\S Corollary~9.3, $N$ is,
in fact, algebraically closed and $K_0$ is real closed (recall that
$1<[N:K_0]<\infty$). In particular, the characteristic of $K$ is
$0$, and hence $K_1=K$. \hfill \qed

\section{Descent Features} \label{sec:descent}

\subsection{Proof of Theorem~\ref{thm:intmineDescent}}
Let $K/K_0$ be a PAC extension and $L/K$ a finite Galois extension.
Assume $\gal(L/K) \leq G_0$, where $G_0$ is regular over $K_0$.

We need to find a Galois extension $L_0/K_0$ such that
$\gal(L_0/K_0)\leq G_0$ and $L = L_0 K$.
\[
\xymatrix@R=15pt@C=20pt{%
    &K\ar@{-}[r]^G
        &L\\
K_0\ar@{-}[r]
    &L_0\cap K\ar@{-}[r]^(.6){G}\ar@{-}[u]
        &L_0\ar@{-}[u]
}%
\]

The restriction map $\theta\colon \gal(K)\to G$ is a solution of the
lower embedding problem of the rational double embedding problem
\begin{equation}
\label{eq thm realizing groups}%
\xymatrix{%
		&&\gal(K)\ar[ddr]\ar[ddll]_{\theta}\ar[d]\\
		&&\gal(K_0)\ar[d]\ar@{.>}[dl]_{\theta_0}\\
G\ar[r]\ar@/_10pt/[rrr]
	&G_0\ar[r]
		&1
			&1.\ar[l]    \\
}%
\end{equation}
It extends to a geometric weak solution $(\theta,\theta_0)$ of
\eqref{eq thm realizing groups} by the lifting property
(Proposition~\ref{prop:ExtensionSoltoGeoSol_DEP}). Let $L_0$ be the
fixed field of $\ker (\theta_0)$. Then $\gal(L_0/K_0) =
\theta_0(\gal(K_0))\leq \gal(L_0/K_0)$. Since $\theta_0(\gal(K)) =
\theta(\gal(K)) = G$ we get that $L = L_0K$. \hfill \qed

\subsection{Corollaries of Theorem~\ref{thm:intmineDescent}}
If the group $G$ in Theorem~\ref{thm:intmineDescent} is regular over
$K_0$ we can take $G = G_0$:

\begin{corollary}\label{cor:decentofregularlyrealizable}
Let $K/K_0$ be a PAC extension. Let $G$ be a finite Galois group
over $K$ that is regular over $K_0$. Then $G$ occurs as a Galois
group over $K_0$.
\end{corollary}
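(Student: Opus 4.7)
The plan is to apply Theorem~\ref{thm:intmineDescent} directly with the choice $G_0 = G$. Since $G$ is a Galois group over $K$, there exists a finite Galois extension $L/K$ with $\gal(L/K) \cong G$, which trivially satisfies $\gal(L/K) \leq G_0 = G$. The hypothesis that $G$ is regular over $K_0$ is exactly the hypothesis needed on $G_0$ in Theorem~\ref{thm:intmineDescent}.

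Theorem~\ref{thm:intmineDescent} then produces a Galois extension $L_0/K_0$ with $\gal(L_0/K_0) \leq G_0 = G$ and $L = L_0 K$. It remains to upgrade the containment $\gal(L_0/K_0) \leq G$ to equality, which is a quick index count: from $L = L_0 K$ and the standard fact that $[L_0 K : K] \leq [L_0 : K_0]$, together with $|\gal(L_0/K_0)| \leq |G| = [L:K] = [L_0 K : K]$, we get
\[
|G| = [L : K] = [L_0 K : K] \leq [L_0 : K_0] = |\gal(L_0/K_0)| \leq |G|,
\]
forcing equality throughout. In particular $\gal(L_0/K_0) = G$, so $G$ occurs as a Galois group over $K_0$, as required.

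There is no real obstacle here; the content of the corollary is entirely packaged in Theorem~\ref{thm:intmineDescent}, and the only thing to verify is that the subgroup $\gal(L_0/K_0) \leq G$ produced by that theorem cannot be proper, which follows by the order comparison above together with the linear disjointness implicit in $L = L_0 K$ (equivalently, $L_0 \cap K = K_0$, since $[L_0:L_0\cap K] = [L_0K:K] = |G| = [L_0:K_0]$).
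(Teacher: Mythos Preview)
Your proof is correct and follows the same approach as the paper: apply Theorem~\ref{thm:intmineDescent} with $G_0 = G$. The paper does not give a separate proof for this corollary; it simply remarks that taking $G_0 = G$ forces $\gal(L_0/K_0) = G$. Your index-count argument for the equality is fine; an alternative is to note from the proof of Theorem~\ref{thm:intmineDescent} that $\theta_0(\gal(K)) = G$, so $G \leq \gal(L_0/K_0) \leq G_0 = G$.
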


Since every abelian group is regular over any field (see e.g.\
\cite{FriedJarden2005}), Proposition~16.3.5, we get the following
\begin{corollary}
Let $K/K_0$ be a PAC extension. Then $K^{\rm ab} = K_0^{\rm ab} K$.
\end{corollary}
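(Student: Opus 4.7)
The plan is to prove the two inclusions separately, with the nontrivial direction reducing immediately to Theorem~\ref{thm:intmineDescent} via the regularity of finite abelian groups over an arbitrary field.

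The inclusion $K_0^{\ab} K \subseteq K^{\ab}$ is purely formal: the restriction map $\gal(K_0^{\ab}K/K) \to \gal(K_0^{\ab}/K_0)$ is injective, so $\gal(K_0^{\ab}K/K)$ is abelian as a subgroup of an abelian profinite group, placing $K_0^{\ab}K$ inside $K^{\ab}$.

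For the reverse inclusion $K^{\ab} \subseteq K_0^{\ab} K$, I would argue one finite piece at a time, using that $K^{\ab}$ is the union of the finite abelian extensions $L/K$ sitting inside $K_s$. Fix such an $L/K$ and set $G = \gal(L/K)$, a finite abelian group. The cited fact (\cite{FriedJarden2005}, Proposition~16.3.5) tells us that every finite abelian group is regular over every field, so in particular $G$ is regular over $K_0$. Now apply Theorem~\ref{thm:intmineDescent} with $G_0 = G$: this produces a Galois extension $L_0/K_0$ with $\gal(L_0/K_0) \leq G$ and $L = L_0K$. Since $\gal(L_0/K_0)$ is a subgroup of the abelian group $G$, it is itself abelian, so $L_0 \subseteq K_0^{\ab}$, and consequently $L = L_0 K \subseteq K_0^{\ab} K$. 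Taking the compositum over all finite abelian $L/K$ inside $K_s$ gives $K^{\ab} \subseteq K_0^{\ab} K$, completing the proof.

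There is no genuine obstacle here: the corollary is a one-line consequence of Theorem~\ref{thm:intmineDescent} once one recognizes that the descent input ``$G_0$ regular over $K_0$'' is automatic in the abelian setting. All the real work has already been done in establishing the lifting property and deducing the descent theorem from it.
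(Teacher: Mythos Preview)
Your proof is correct and follows exactly the route the paper intends: the corollary is deduced from Theorem~\ref{thm:intmineDescent} together with the fact that every finite abelian group is regular over any field. The paper states this as an immediate consequence without spelling out the details, so your write-up simply makes explicit the two inclusions and the finite-piece reduction that the paper leaves to the reader.
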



From the fact that the symmetric group is regular over any field
Theorem~\ref{thm:intmineDescent} gives a new proof of
Theorem~\ref{thm:intSplitting}. This new proof provides an insight
into Razon's original technical proof of
Theorem~\ref{thm:intSplitting} in \cite{Razon2000}.

\begin{proof}[Proof of Theorem~\ref{thm:intSplitting}]
Let $K/K_0$ be a PAC extension and let $L/K$ be a separable
extension. We need to find a separable $L_0/K_0$ that is
linearly disjoint from $K$ over $K_0$ such that $L = L_0K$.

First assume that $[L:K]$ is finite. Let $M$ be the Galois closure
of $L/K$, $G=\gal(M/K)$, $G' = \gal(M/L)$.
The action of $G$ on the cosets $\Sigma=G/G'$ admits an embedding
$i\colon G\to S_\Sigma$.

As $S_\Sigma$ is regular over $K_0$ (\cite{Lang2002}, p.~272,
Example~4), Theorem~\ref{thm:intmineDescent} gives a Galois
extension $M_0/K_0$ with Galois group $H = \gal(M_0/K_0)$ such that
$H\leq S_\Sigma$ and $G\leq H$ (since $M = M_0 K$). Then $H$ is
transitive, since $G$ is. Thus $(H:H') = |\Sigma|= [L:K]$, where
$H'$ is the stabilizer in $H$ of the coset $G'\in \Sigma$. Also, as
the subgroup $G'\leq G$ is the stabilizer in $G$ of the coset $G'\in
\Sigma$, it follows that $H'\cap G=G'$.

Let $L_0\subseteq M_0$ be the corresponding fixed field of $H'$
(i.e.\ $\gal(M_0/L_0) = H'$). So by the Galois correspondence
$\gal(L) = \gal(L_0)\cap \gal(K) = \gal(L_0 K)$, hence $L = L_0 K$.
In addition, $L_0$ is linearly disjoint from $K$, since
$[L_0:K_0]=(H:H') = [L:K]$, as needed.

The case where $L/K$ is an infinite extension follows from Zorn's
Lemma. The main point is that for a tower of algebraic extensions
$L_1\subseteq L_2\subseteq L_3$, $L_3/L_1$ is separable if and only
if both $L_2/L_1$ and $L_3/L_2$ are. The details can be found in
\cite{Razon2000}.
\end{proof}

\begin{remark}
In the last proof $H'$ was the stabilizer of a point of a subgroup
of $S_n$. This stabilizer is, in general, not normal even if $L/K$
is Galois. That is to say, $L_0/K_0$ need not be Galois, even if
$L/K$ is.
\end{remark}

\section{Fields which are Finite Separable/Galois Extensions of no Proper Subfield}
In this section we prove the generalizations of
Theorems~\ref{thm:intChatzidakis} and \ref{thm:intbottom}. In
particular we settle Problem~18.7.8 of \cite{FriedJarden2005} for
finitely generated infinite fields. Before doing this we need a
technical preparation about Hilbertian fields over subsets and their
relation with PAC extensions.

\subsection{Hilbertian Fields over Subsets}
Let us start by introducing some notation. Let
\[
f_1(T_1,\ldots, T_e ,X_1, \ldots, X_n),\ldots, f_m(T_1,\ldots, T_e
,X_1, \ldots, X_n)\in K[\bfT,\bfX]
\]
be irreducible polynomials and $g(\bfT)\in K[\bfT]$ nonzero. The
corresponding \textbf{Hilbert set} is the set of all irreducible
specializations $\bfT\mapsto \bfa \in K^e$ for $f_1,\ldots, f_m$
under which $g$ does not vanish, i.e.\
\[
H_K(f_1,\ldots, f_m;g) = \{ \bfa\in K^r \mid \forall i\:
f_i(\bfa,\bfX) \mbox{ is irreducible in } K[\bfX]\ \mbox{and} \
g(\bfa)\neq 0\}.
\]
Now $K$ is \textbf{Hilbertian} if any Hilbert set is nonempty
provided that $n=1$ and $f_i=f_i(\bfT,X)$ is separable in $X$ for
each $i$. (Some authors use the terminology `$K$ is separably
Hilbertian'.) A stronger property is that any Hilbert set for $K$ is
nonempty. We call such a field \textbf{s-Hilbertian}.

In case the characteristic of $K$ is zero, these two properties
coincide. If the characteristic of $K$ is positive, there is a
simple criterion for a Hilbertian field to be s-Hilbertian.

\begin{theorem}[Uchida {\cite{FriedJarden2005}, Proposition~12.4.3}]
Let $K$ be a Hilbertian field of characteristic $p>0$. Then $K$ is
s-Hilbertian if and only if $K$ is imperfect.
\end{theorem}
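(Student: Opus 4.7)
The plan is to treat the two implications separately; the forward direction is short, while the converse requires a reduction of arbitrary Hilbert sets to separable ones that exploits the imperfection.

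First I handle ``$s$-Hilbertian $\Rightarrow$ imperfect'' by contrapositive. Assume $K$ is perfect of characteristic $p$, and consider $f(T,X)=X^p-T\in K[T,X]$. Since $T\notin K(T)^p$, the polynomial $X^p-T$ is irreducible over $K(T)$, so $f$ is irreducible in $K[T,X]$. Yet perfectness provides, for every $a\in K$, a unique $b\in K$ with $b^p=a$, giving $f(a,X)=(X-b)^p$, which is reducible. Hence $H_K(f)=\emptyset$, so $K$ is not $s$-Hilbertian.

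For the converse, assume $K$ is Hilbertian and imperfect, and fix an element $c\in K\setminus K^p$. Given an arbitrary Hilbert set $H_K(f_1,\ldots,f_m;g)$, where the $f_i\in K[\bfT,X]$ are irreducible but not necessarily separable in $X$, I aim to produce $\bfa\in K^e$ at which each $f_i(\bfa,X)$ remains irreducible and $g(\bfa)\neq0$. For each $i$, factor $f_i(\bfT,X)=h_i(\bfT,X^{p^{k_i}})$ with $k_i\geq0$ maximal and $h_i$ separable in its last argument; then each $h_i$ is irreducible. When $k_i=0$, Hilbertianity applied to $h_i$ already handles the condition. When $k_i\geq1$, even after Hilbertianity yields $\bfa$ for which all $h_i(\bfa,Y)$ are irreducible, the specialized inseparable polynomial $f_i(\bfa,X)=h_i(\bfa,X^{p^{k_i}})$ is irreducible over $K$ if and only if $X^{p^{k_i}}-\alpha_i$ is irreducible over $K(\alpha_i)$, where $\alpha_i$ is a root of $h_i(\bfa,Y)$. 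By the standard criterion for purely inseparable radical extensions in characteristic $p$, this is equivalent to $\alpha_i\notin K(\alpha_i)^p$.

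The decisive step is to encode the non-$p$-th-power requirement $\alpha_i\notin K(\alpha_i)^p$ as an auxiliary separable irreducibility condition, so it can be bundled into the Hilbert set. Generically this condition already holds: since $f_i$ is irreducible with $k_i$ maximal, the generic root of $h_i(\bfT,Y)$ satisfies $\alpha_i\notin K(\bfT,\alpha_i)^p$, witnessed by the generic $p^{k_i}$-th root $\beta_i$. Using the element $c\in K\setminus K^p$, one constructs for each inseparable $f_i$ a separable irreducible polynomial $\psi_i(\bfT,Y)\in K[\bfT,Y]$ whose irreducible specialization at $\bfa$ forces the residual $\alpha_i$ to witness $\alpha_i\notin K(\alpha_i)^p$; the role of $c$ is to supply, inside $K$, the extra degree-$p$ inseparable obstruction that the perfect case lacks. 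Hilbertianity applied to the enlarged family $\{h_i,\psi_i\}_i$ together with $g$ then produces the required $\bfa$.

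The forward direction is routine; the main obstacle is the converse, specifically the construction of the auxiliary polynomials $\psi_i$. Translating the condition ``$\alpha_i$ is not a $p$-th power in $K(\alpha_i)$'' into a separable algebraic constraint accessible to Hilbertianity is the technical heart, and this is precisely where imperfection enters: the element $c$ distinguishes the generic non-$p$-th-power case from degenerate specializations in which $\alpha_i$ accidentally becomes a $p$-th power.
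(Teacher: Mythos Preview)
The paper does not prove this theorem; it is quoted as a known result from \cite{FriedJarden2005}, Proposition~12.4.3, and used without argument. There is therefore no ``paper's own proof'' to compare your attempt against.

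That said, your outline is the standard Uchida argument. The forward direction is complete and correct as written. For the converse, your decomposition $f_i(\bfT,X)=h_i(\bfT,X^{p^{k_i}})$ with $h_i$ separable is right, and the criterion that $h_i(\bfa,X^{p^{k_i}})$ stays irreducible exactly when the specialized root $\alpha_i$ of $h_i(\bfa,Y)$ is not a $p$-th power in $K(\alpha_i)$ is correct. The step you flag as ``the technical heart'' --- encoding the condition $\alpha_i\notin K(\alpha_i)^p$ as a separable Hilbertian condition using $c\in K\smallsetminus K^p$ --- is indeed where all the work lies, and you leave it entirely as a black box. In Uchida's actual proof one exhibits a concrete separable polynomial whose irreducibility at $\bfa$ forces this; without that construction, your plan is an accurate roadmap but not yet a proof. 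One further point: the paper's definition of s-Hilbertian allows several $X$-variables, while your reduction treats only $f_i\in K[\bfT,X]$; the passage from $n$ variables to one is standard (e.g.\ Fried--Jarden, Lemma~13.1.2 / Proposition~13.2.2), but strictly speaking you should invoke it.
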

(Recall that $K$ is imperfect if $[K:K^p] >1$.)

\begin{definition}
A field $E$ is said to be \textbf{Hilbertian over} a subset $K$ if
\[
H_E(f_1,\ldots,f_m;g)\cap K^r\neq \emptyset
\]
for any irreducible $f_1,\ldots,f_m\in E[\bfT, X]$ that are
separable in $X$ and any nonzero $g(\bfT)\in E[\bfT]$. If
furthermore $H_E(f_1,\ldots,f_m;g)\cap K^r\neq \emptyset$ for any
irreducible $f_1,\ldots,f_m\in E[\bfT,\bfX]$, then we say that $E$
is \textbf{s-Hilbertian over} $K$.
\end{definition}

Note that a field $K$ is Hilbertian (resp.\ s-Hilbertian) if and
only if it is Hilbertian (resp.\ s-Hilbertian) over itself.

Jarden and Razon prove that if $R$ is a ring with quotient field $K$
and $K$ is a countable Hilbertian field over $R$, then
$K_s(\bfsigma)/R$ and $\Kgal(\bfsigma)$ are PAC for almost all
$\bfsigma\in \gal(K)^e$ \cite{JardenRazon1994}, Proposition~3.1.

A crucial observation for our applications is that the proof of
\cite{JardenRazon1994}, Proposition~3.1, gives the following
stronger statement.

\begin{theorem}
Let $E$ be a countable field that is Hilbertian over a subset $K$.
Then for almost all $\bfsigma\in \gal(E)^e$ the fields
$E_{s}(\bfsigma)$ and $\Egal(\bfsigma)$ are PAC over $K$.
\end{theorem}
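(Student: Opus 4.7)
The plan is to follow the structure of Jarden--Razon's proof of \cite{JardenRazon1994}, Proposition~3.1, modifying it so that the specialization parameters are taken in the distinguished subset $K$ rather than in all of $E$. First I would dispose of the statement for $\Egal(\bfsigma)$: since $\Egal(\bfsigma)/E_s(\bfsigma)$ is purely inseparable, \cite{JardenRazon1994}, Corollary~2.3, reduces it to the claim for $E_s(\bfsigma)/K$, so the remainder of the proof treats only $E_s(\bfsigma)$.

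Next I would invoke Proposition~\ref{prop:DefinitionPACextension}\eqref{con poly:DefinitionPACextension}, which reformulates PACness of $E_s(\bfsigma)/K$ as the requirement that for every absolutely irreducible $f(T,X)\in E_s(\bfsigma)[T,X]$ separable in $X$ and every nonzero $r(T)\in E_s(\bfsigma)(T)$ there exists $(a,b)\in K\times E_s(\bfsigma)$ with $f(a,b)=0$ and $r(a)\ne 0$. Since $E_s$ is countable, the triples $(L,f,r)$ with $L/E$ a finite Galois subextension of $E_s/E$ and $f,r$ defined over $L$ form a countable family. A countable union of null sets being null, it suffices to fix one such triple and show that the set of $\bfsigma\in\gal(E)^e$ with $L\subseteq E_s(\bfsigma)$ for which no admissible $(a,b)$ exists has Haar measure zero inside the coset $\{\bfsigma|_L=\mathrm{id}\}$.

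For a fixed triple I would construct inductively an infinite sequence $a_1,a_2,\ldots\in K$ and roots $b_i\in E_s$ of $f(a_i,X)$ such that the splitting fields $F_i$ of $f(a_i,X)$ over $L$ are pairwise linearly disjoint over $L$. Each $a_i$ is produced by applying the hypothesis that $E$ is Hilbertian over $K$ to the polynomial $f(T,X)$ augmented with an auxiliary separable polynomial encoding irreducibility of $f(a,X)$ over the compositum $L\cdot F_1\cdots F_{i-1}$ together with nonvanishing of $r$. This is the sole point in the argument where Hilbertianity \emph{over $K$} enters, and it is precisely the substitution that upgrades \cite{JardenRazon1994}, Proposition~3.1, to the present statement. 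Linear disjointness of the $F_i$ makes the events $\{b_i\in E_s(\bfsigma)\}$ mutually independent under the Haar measure on the relevant coset, each with probability uniformly bounded below; the Borel--Cantelli lemma then forces $b_i\in E_s(\bfsigma)$ for infinitely many $i$ outside a null set, producing the required $(a_i,b_i)$.

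The main obstacle is the inductive linear-disjointness step: one must check that the Hilbert sets of $E$ used to irreducibly specialize $f(T,X)$ over the enlarging base fields $L\cdot F_1\cdots F_{i-1}$ continue to intersect $K$. This reduces, via the standard device of packaging irreducibility over a finite separable extension into irreducibility of a single auxiliary polynomial over $E$, to finitely many separable Hilbert conditions over $E$ and is thus supplied directly by the assumed Hilbertianity of $E$ over $K$. Once this construction is set up, the measure-theoretic bookkeeping is a direct transcription of \cite{JardenRazon1994}.
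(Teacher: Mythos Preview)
Your proposal is correct and matches the paper's approach: the paper itself gives no detailed argument but simply observes that the proof of \cite{JardenRazon1994}, Proposition~3.1, already yields this stronger statement verbatim once one notes that the only use of Hilbertianity there is to find specialization points, which may therefore be taken in $K$. Your outline is a faithful unpacking of that proof, with the sole modification (specializations landing in $K$) identified at exactly the right step.
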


We shall use the last result to find new PAC extensions, and we
start by finding Hilbertian fields over other fields.

\begin{lemma}\label{lem:sHilb_p_t}
Let $K$ be an s-Hilbertian field over a subset $S$ and $E/K$ a
purely transcendental extension. Then $E$ is s-Hilbertian over $S$.
\end{lemma}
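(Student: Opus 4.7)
The plan is to reduce the problem to a single application of the s-Hilbertianity of $K$ over $S$ by absorbing the transcendental parameters of $E$ into the ``free'' variables of the Hilbert set. First I would reduce to the case in which only finitely many transcendentals are relevant: given finitely many irreducible $f_1,\ldots,f_m\in E[\bfT,\bfX]$ and nonzero $g\in E[\bfT]$, all their coefficients lie in $K(\bfu)$ for some finite tuple $\bfu=(u_1,\ldots,u_d)$ from a transcendence basis of $E/K$. Setting $E'=K(\bfu)$, it suffices to find $\bfa\in S^r$ in $H_{E'}(f_1,\ldots,f_m;g)$, because irreducibility in $E'[\bfX]$ persists in $E[\bfX]$ (the extension $E/E'$ is purely transcendental, so an irreducible polynomial of $E'[\bfX]$, viewed as a constant polynomial in the new transcendentals, remains prime in the UFD obtained by adjoining them, and passes to $E[\bfX]$ by Gauss).

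Next I would clear denominators so that each $f_i$ becomes a polynomial $\tilde f_i\in K[\bfu,\bfT,\bfX]$, primitive as a polynomial in $\bfT,\bfX$ over the UFD $K[\bfu]$; Gauss's lemma then gives that each $\tilde f_i$ is irreducible in $K[\bfu,\bfT,\bfX]$. Writing $\bfY=(\bfu,\bfX)$, we view $\tilde f_i$ as an irreducible element of $K[\bfT,\bfY]$ in the usual Hilbertian configuration. Similarly let $\tilde g\in K[\bfu,\bfT]$ be the numerator of $g$ after clearing denominators.

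Now I would apply s-Hilbertianity of $K$ over $S$ to the $\tilde f_i\in K[\bfT,\bfY]$ together with an auxiliary polynomial $G(\bfT)\in K[\bfT]$ designed to encode two further non-vanishing conditions at $\bfa$: (i) $\tilde g(\bfu,\bfa)\neq 0$ in $K[\bfu]$, and (ii) the top $\bfX$-degree coefficient of $\tilde f_i$ (an element of $K[\bfu,\bfT]$) does not vanish at $\bfT=\bfa$ as an element of $K[\bfu]$. For each of these elements of $K[\bfu,\bfT]$, picking any nonzero coefficient when it is viewed as a polynomial in $\bfu$ gives a nonzero element of $K[\bfT]$; let $G(\bfT)$ be the product of these nonzero coefficients. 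Then the nonempty intersection $H_K(\tilde f_1,\ldots,\tilde f_m;G)\cap S^r$ produces $\bfa\in S^r$ with $\tilde f_i(\bfu,\bfa,\bfX)$ irreducible in $K[\bfu,\bfX]$, of positive $\bfX$-degree, and with $\tilde g(\bfu,\bfa)\neq 0$ (hence $g(\bfa)\neq 0$ in $E$).

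A final application of Gauss's lemma in the UFD $K[\bfu]$ upgrades the irreducibility of $\tilde f_i(\bfu,\bfa,\bfX)$ in $K[\bfu,\bfX]$ to irreducibility in $K(\bfu)[\bfX]=E[\bfX]$; here positive $\bfX$-degree is essential, to rule out $\tilde f_i(\bfu,\bfa,\bfX)\in K[\bfu]$, in which case it would be a unit of $E[\bfX]$. This gives $\bfa\in H_E(f_1,\ldots,f_m;g)\cap S^r$, as required. The main delicate point is exactly the packaging of the ``positive $\bfX$-degree after specialization'' and the ``$g$ doesn't vanish'' conditions into a single Hilbert-set polynomial $G(\bfT)\in K[\bfT]$; once this bookkeeping is done, the remainder is a routine chain of Gauss's lemma applications.
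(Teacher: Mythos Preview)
Your proposal is correct and follows essentially the same strategy as the paper: reduce to finitely many transcendentals $\bfu$, view them as extra $\bfX$-variables, and invoke the s-Hilbertianity of $K$ over $S$ for the polynomials $g_i(\bfu,\bfT,\bfX)\in K[\bfT,(\bfu,\bfX)]$. The paper's proof is terser and leaves implicit the Gauss-lemma bookkeeping, the handling of $g(\bfa)\neq 0$, and the positive $\bfX$-degree issue that you spell out; your added care on these points is justified but does not change the argument's substance.
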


\begin{proof}
Let $f_1(\bfT,\bfX),\ldots, f_r(\bfT,\bfX)\in E[\bfT,\bfX]$ be
irreducible polynomials and $0\neq g(\bfT)\in E[\bfT]$. Since $E =
K(u_\alpha \mid \alpha\in A)$, where $\{u_\alpha\mid \alpha\in A\}$
is a set of variables, we can assume that $f_i(\bfT,\bfX) =
g_i(\bfu,\bfT,\bfX)$, where
\[
g_1(\bfu,\bfT,\bfX),\ldots,g_r(\bfu,\bfT,\bfX)\in K[\bfu,\bfT,\bfX]
\]
for some finite tuple of variables $\bfu$.

Since $K$ is s-Hilbertian over $S$, there exists a tuple $\bfa$ of
elements in $S$ such that all $f_i(\bfa,\bfX) = g_i(\bfu,\bfa,\bfX)$
are irreducible in $K[\bfu,\bfX]$ and $g(\bfa)\neq 0$. But the
elements in $\{u_\alpha\mid \alpha\in A\}$ are algebraically
independent, so all $f_i(\bfa,\bfX) = g_i(\bfu,\bfa,\bfX)$ are
irreducible in the larger ring $E[\bfX]$.
\end{proof}

\begin{proposition}\label{prop:HilbertianOver}
Let $K$ be an s-Hilbertian field over a subset $S$ and let $E/K$ be
a finitely generated extension. Then $E$ is Hilbertian over $S$.
Moreover, if $E/K$ is also separable, then $E$ is even s-Hilbertian
over $S$.
\end{proposition}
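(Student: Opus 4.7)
The plan is to reduce to a Hilbertian tower statement by factoring $E/K$ through a purely transcendental extension. Choose a transcendence basis $\bft = (t_1,\ldots,t_e)$ of $E/K$, taking $\bft$ to be separating in the case that $E/K$ is separable, and set $F=K(\bft)$. Then $E/F$ is finite algebraic, and in the separable case it is finite separable. By Lemma~\ref{lem:sHilb_p_t}, $F$ is s-Hilbertian over $S$. Thus it suffices to establish the following tower claim: \emph{if $F$ is s-Hilbertian over $S$ and $E/F$ is finite, then $E$ is Hilbertian over $S$; and if moreover $E/F$ is separable, then $E$ is s-Hilbertian over $S$.}

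To prove this tower claim I would mimic the classical argument showing that finite separable extensions of Hilbertian fields are Hilbertian (cf.\ \cite{FriedJarden2005}, Chapter~12). Given irreducible polynomials $f_1,\ldots,f_m \in E[\bfT,\bfX]$ (separable in $\bfX$ when we only seek Hilbertianness over $S$; arbitrary in the s-Hilbertian case, which uses separability of $E/F$) together with a nonzero $g \in E[\bfT]$, I would construct auxiliary irreducible polynomials $f_1^\ast,\ldots,f_m^\ast \in F[\bfT,\mathbf{Y}]$ and a nonzero $g^\ast \in F[\bfT]$ with the property that any irreducible specialization $\bfT \mapsto \bfa$ of the $f_i^\ast$ over $F$ with $g^\ast(\bfa) \neq 0$ yields an irreducible specialization of the $f_i$ over $E$ with $g(\bfa) \neq 0$. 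The construction proceeds via a primitive generator of $E/F$, reexpressing irreducibility over $E$ as irreducibility over $F$ of polynomials whose coefficients are suitable symmetric functions of conjugates. Because $F$ is s-Hilbertian over $S$, I can then locate such $\bfa \in S^r$, and this produces the desired irreducible specialization for $E$.

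The main obstacle is that the auxiliary polynomials $f_i^\ast$ need not be separable in $\mathbf{Y}$ even when the original $f_i$ are separable in $\bfX$. This is precisely why the hypothesis that $F$ be \emph{s}-Hilbertian over $S$, rather than merely Hilbertian over $S$, is essential at the final step, and it also accounts for the asymmetry between the two parts of the statement: in the separable case $E/F$ one can arrange the construction so that arbitrary irreducible $f_i$ over $E$ lead to inputs for which s-Hilbertianness of $F$ over $S$ directly applies, yielding s-Hilbertianness of $E$ over $S$; without separability of $E/F$ one only recovers Hilbertianness over $S$. The bookkeeping needed to ensure that the construction respects the requirement $\bfa \in S^r$, rather than just $\bfa \in F^r$, is automatic once one has the correct s-Hilbertian input at the base.
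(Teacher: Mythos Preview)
Your approach is essentially the same as the paper's: factor $E/K$ through the purely transcendental extension $K(\bft)$, apply Lemma~\ref{lem:sHilb_p_t}, and then descend Hilbert sets along the finite extension $E/K(\bft)$. The paper simply cites \cite{FriedJarden2005}, Proposition~12.3.3 (general case) and Corollary~12.2.3 (separable case) for the descent step rather than sketching the primitive-element construction; note in particular that Proposition~12.3.3 already produces a \emph{separable} Hilbert set for $K(\bft)$, so the ``main obstacle'' you describe in the first case does not in fact arise.
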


\begin{proof}
Choose a transcendence basis $\bft$ for $E/K$, i.e., $K(\bft)/K$ is
purely transcendental and $E/K(\bft)$ is finite. Let $H\subseteq
E^r$ be a separable Hilbert set for $E$. By \cite{FriedJarden2005},
Proposition~12.3.3, there exists a separable Hilbert set
$H_1\subseteq K(\bft)^r$ such that $H_1\subseteq H$. By
Lemma~\ref{lem:sHilb_p_t}, we get that $H_1\cap S^r\neq \emptyset$,
and hence the assertion.

If $E/K$ is also separable, then we can choose $\bft$ to be a
separating transcendence basis, that is, we can assume that
$E/K(\bft)$ is separable. Now the same argument as above work for
any Hilbert set $H\subseteq E^r$ (using \cite{FriedJarden2005},
Corollary~12.2.3, instead of Proposition~12.3.3).
\end{proof}

Combining the results that we attained so far, we enlarge the family
of PAC extensions:

\begin{theorem}\label{thm:almostallPACextension}
Let $e\geq 1$ be an integer, let $K$ be a countable field which is
s-Hilbertian over some subset $S$, and let $E/K$ be a finitely
generated extension. Then for almost all $\bfsigma\in \gal(E)^e$ the
fields $E_s(\bfsigma)$ and $\Egal(\bfsigma)$ are PAC over $S$.

In particular, the result is valid when $K$ is a countable
s-Hilbertian field (and $S=K$).
\end{theorem}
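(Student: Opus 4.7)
The plan is essentially a two‑step assembly from the machinery developed immediately before the statement. First I would observe that the field $E$ is countable: $K$ is countable by hypothesis and $E/K$ is finitely generated, so $E$ has a countable transcendence basis over the prime field, a finite separating piece, and hence countably many elements. This is needed so that the almost-all statement (which relies on the Haar measure on $\gal(E)^e$) has meaning via the preceding theorem.

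Next I would invoke Proposition~\ref{prop:HilbertianOver} applied to the s-Hilbertian field $K$ over $S$ and to the finitely generated extension $E/K$. This yields that $E$ is Hilbertian over $S$ (and in fact s-Hilbertian over $S$ whenever $E/K$ is separable, though we will not need this refinement). With $E$ now countable and Hilbertian over the subset $S$, the hypotheses of the unnamed theorem preceding the statement (the strengthened version of \cite{JardenRazon1994}, Proposition~3.1) are satisfied with the roles of ``$E$'' and ``$K$'' in that theorem played by $E$ and $S$, respectively. Applying that theorem directly yields that for almost all $\bfsigma\in\gal(E)^e$ both $E_s(\bfsigma)$ and $\Egal(\bfsigma)$ are PAC over $S$, which is precisely the conclusion.

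For the ``in particular'' clause, when $S=K$ the hypothesis that $K$ is s-Hilbertian over $S$ reduces to the usual definition of $K$ being s-Hilbertian, and the conclusion is immediate from the general case. There is no serious obstacle here: the work has already been done in Proposition~\ref{prop:HilbertianOver} (which transports s-Hilbertianity over a subset from $K$ to finitely generated extensions) and in the strengthened Jarden--Razon theorem (which converts Hilbertianity over a subset into PACness over that subset for the random fixed fields). The only point that requires a moment's care is verifying the countability of $E$ so that the measure-theoretic framework of the Jarden--Razon argument applies.
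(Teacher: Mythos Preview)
Your proposal is correct and matches the paper's intended argument: the theorem is stated immediately after the sentence ``Combining the results that we attained so far, we enlarge the family of PAC extensions,'' with no further proof given, and the combination is precisely Proposition~\ref{prop:HilbertianOver} followed by the strengthened Jarden--Razon theorem. Your explicit verification that $E$ is countable is a detail the paper leaves implicit but which is indeed needed.
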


\begin{corollary} \label{cor:PACoverf.g.}
Let $e\geq 1$ be an integer and let $K$ be a finitely generated
infinite field (over its prime field). Then for almost all
$\bfsigma\in \gal(K)^e$ the field $K_s(\bfsigma)$ is a PAC extension
of any subfield which is not algebraic over a finite field.
Moreover, if $K$ is of characteristic~$0$, then $K_s(\bfsigma)$ is
also PAC over any subring.
\end{corollary}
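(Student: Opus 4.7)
The plan is to deduce the corollary from Theorem~\ref{thm:almostallPACextension} using two ingredients: (i) the existence of a countable s-Hilbertian subfield $K'\subseteq K$ that is s-Hilbertian over the target subset $S$; and (ii) the trivial monotonicity that a PAC extension over a subset $S$ is automatically a PAC extension over any larger subset $S'\supseteq S$ (which follows directly from Proposition~\ref{prop:DefinitionPACextension}(b), since an $\bfa\in S^e$ witnessing PACness also lies in $(S')^e$). Note that $K$, being finitely generated over its countable prime field, is itself countable, so countable intersections of measure-one subsets of $\gal(K)^e$ still have measure one.

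In characteristic zero, classical Hilbert irreducibility tells us that $\bbQ$ is s-Hilbertian over $\bbZ$. Applying Theorem~\ref{thm:almostallPACextension} with the ambient field taken to be $\bbQ$, with $E=K$, and with $S=\bbZ$ (using that $K/\bbQ$ is finitely generated) produces a single measure-one set $\Omega\subseteq\gal(K)^e$ such that $K_s(\bfsigma)/\bbZ$ is PAC for every $\bfsigma\in\Omega$. Any unital subring and any subfield of $K$ contains $\bbZ$, so both assertions of the corollary in the characteristic-zero case follow at once by monotonicity.

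In positive characteristic $p$ I would argue field-by-field, since $\bbF_p$ itself is finite and cannot serve as the target subset. For each $t\in K$ transcendental over $\bbF_p$ the subfield $\bbF_p(t)$ is Hilbertian (a rational function field over the prime field) and imperfect (as $[\bbF_p(t):\bbF_p(t^p)]=p$), so by Uchida's theorem it is s-Hilbertian, and in particular s-Hilbertian over itself. Theorem~\ref{thm:almostallPACextension} then yields a measure-one set $\Omega_t\subseteq\gal(K)^e$ on which $K_s(\bfsigma)$ is PAC over $\bbF_p(t)$. Since the set of such $t$ is countable, $\Omega:=\bigcap_t\Omega_t$ still has measure one. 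For any subfield $K_0\subseteq K$ not algebraic over $\bbF_p$ one can pick some $t\in K_0$ transcendental over $\bbF_p$; then $\bbF_p(t)\subseteq K_0$, and PACness of $K_s(\bfsigma)/\bbF_p(t)$ upgrades by monotonicity to PACness of $K_s(\bfsigma)/K_0$ for every $\bfsigma\in\Omega$.

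No genuine obstacle is anticipated; the only point requiring care is the countable intersection in positive characteristic, which is harmless by countability of $K$. All the substantive input is already in the paper: Hilbert's irreducibility theorem for $\bbQ$ and for $\bbF_p(t)$, Uchida's theorem to upgrade Hilbertian to s-Hilbertian in positive characteristic, and Theorem~\ref{thm:almostallPACextension} itself, which does the real work.
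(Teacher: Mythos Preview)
Your characteristic-zero argument is correct and essentially identical to the paper's.

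In positive characteristic there is a gap. The corollary concerns every subfield $K_0\subseteq K_s(\bfsigma)$ not algebraic over $\bbF_p$, but you only range over transcendental $t\in K$ and, at the end, only treat subfields $K_0\subseteq K$. A subfield of $K_s(\bfsigma)$ need not contain any transcendental element of $K$. For instance, with $p$ odd, $K=\bbF_p(u,v)$, and $\alpha\in K_s$ a square root of $u$, the field $K_0=\bbF_p(\alpha+v)$ satisfies $K_0\cap K=\bbF_p$ (any $f(\alpha+v)\in K$ is fixed by $\alpha\mapsto-\alpha$, and since $\alpha+v$ and $-\alpha+v$ are algebraically independent over $\bbF_p$ this forces $f$ constant), yet $K_0\subseteq K_s(\bfsigma)$ whenever $\bfsigma\in\gal(K(\alpha))^e$. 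Your set $\Omega$ says nothing about such $K_0$.

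The paper repairs this by letting $t$ range over all of $K_s\smallsetminus\tilde\bbF_p$ (still countable). For $t\notin K$ one must apply Theorem~\ref{thm:almostallPACextension} with $E=K(t)$, which yields full measure only inside the open subgroup $\gal(K(t))^e$; outside this subgroup one has $t\notin K_s(\bfsigma)$ and the requirement on $\bbF_p(t)$ is vacuous. Accordingly the paper defines the conditional set $\Sigma_t=\{\bfsigma:\ t\in K_s(\bfsigma)\Rightarrow K_s(\bfsigma)/\bbF_p(t)\text{ is PAC}\}$, shows $\mu(\Sigma_t)=1$ for each $t$, and then intersects over the countably many $t\in K_s$. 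Note that the naive extension of your $\Omega_t$ to $t\in K_s$ would not work: without the conditional formulation, $\bigcap_t\Omega_t$ would be contained in $\bigcap_t\gal(K(t))^e$, which is far from having full measure.
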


\begin{proof}
First assume that $K$ is of characteristic $0$. Then any ring
contains $\bbZ$, so it suffices to show that $K_s(\bfsigma)/\bbZ$ is
PAC for almost all $\bfsigma\in \gal(K)^e$. And indeed, since $\bbQ$
is Hilbertian over $\bbZ$ and $K$ is finitely generated over $\bbQ$,
Theorem~\ref{thm:almostallPACextension} implies that
$K_s(\bfsigma)/\bbZ$ is PAC for almost all $\bfsigma$.

Next assume that the characteristic of $K$ is $p>0$. Since any field
$F$ which is not algebraic over $\bbF_p$ contains a rational
function field $\bbF_p(t)$, it suffices to show that
$K_s(\bfsigma)/\bbF_p(t)$ is PAC for almost all $\bfsigma\in
\gal(K)^e$ and any $t\in K_s(\bfsigma) \smallsetminus \tilde
\bbF_p$.

Set $G=\gal(K)^e$ and let $\mu$ be its normalized Haar measure.
For any $t\in K_s \smallsetminus \tilde\bbF_p$ we define a subset
$\Sigma_{t}\subseteq G$ as follows:
\begin{equation}\label{eq:Sigma_F}
\Sigma_{t} = \{ \bfsigma\in G \mid \mbox{if } t\in K_s(\bfsigma),
\mbox{ then } K_s(\bfsigma)/\bbF_p(t) \mbox{ is PAC}\}.
\end{equation}

We claim that $\mu(\Sigma_{t})=1$. Indeed, let $E=K(t)$. Then $E/K$
is a finite separable extension. Let $H=\gal(E)^e$ be the
corresponding open subgroup of $G$.

Note that $t\in K_s(\bfsigma)$ if and only if $\bfsigma\in H$. Then
the definition of $\Sigma_t$ implies that
\[
\Sigma_t =  (H\cap \Sigma_t)\cup (G\smallsetminus H).
\]
Hence it suffices to show that $\mu(H\cap \Sigma_t)=\mu(H)$, or
equivalently, $\nu(H\cap \Sigma_t)=1$, where $\nu$ denotes the
normalized Haar measure on $H$.

Since $\bbF_p(t)$ is Hilbertian (\cite{FriedJarden2005},
Theorem~13.3.5) and imperfect (\cite{FriedJarden2005}, Lemma~2.7.2),
Uchida's theorem implies that $\bbF_p(t)$ is s-Hilbertian. Also
$E/\bbF_p(t)$ is finitely generated because $K$ is.

Finally, since $H\cap \Sigma_t$ is the set of all $\bfsigma\in
\gal(E)^e$ for which $E_s(\bfsigma)/\bbF_p(t)$ is PAC, and since
$E_s=K_s$, Theorem~\ref{thm:almostallPACextension} implies that
$\nu(H\cap \Sigma_t)=1$, as desired.
\end{proof}

\subsection{The Bottom Theorem}
Now we are ready to address Problem 18.7.8 of
\cite{FriedJarden2005}, the so-called `bottom theorem'. Let $K$ be a
Hilbertian field and $e\geq 1$ an integer. The problem asks whether
for almost all $\bfsigma = (\sigma_1, \ldots, \sigma_e)\in
\gal(K)^e$ the field $M=K_s(\bfsigma)$ has no cofinite proper
subfield (that is, $N\subsetneq M$ implies $[M:N]=\infty$). Here the
phrase `for almost all' refers to the Haar measure on the profinite,
and hence compact, group $\gal(K)$.

Note that the Hilbertian field $K=\bbF_p(t)$ has imperfect degree
$p$, i.e., $[K:K^p]=p$. Moreover, the imperfect degree is preserved
under separable extensions (see \cite{FriedJarden2005},
Lemma~2.7.3), and hence every separable extension $M/K$ satisfies
$[M:M^p]=p$. In particular, $M^p$ is a cofinite proper subfield of
$M=K_s(\bfsigma)$ for all $\bfsigma\in \gal(K)^e$. Consequently, the
problem requires a small modification, namely a separability
assumption:

\begin{conjecture}\label{conj:bottom}
Let $K$ be a Hilbertian field and $e\geq 1$ an integer. Then for
almost all $\bfsigma\in\gal(K)^e$ the field $K_s(\bfsigma)$ is a
finite separable extension of no proper subfield.
\end{conjecture}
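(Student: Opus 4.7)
The plan is to combine the production of PAC extensions in Corollary~\ref{cor:PACoverf.g.} with the classification of finite PAC extensions in Theorem~\ref{thm:finitePACext}. Since Corollary~\ref{cor:PACoverf.g.} is only available for finitely generated infinite fields, I will prove the statement under this hypothesis, which matches the scope actually claimed in the text preceding the conjecture.

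Let $K$ be a finitely generated infinite field and $e\geq 1$. By Corollary~\ref{cor:PACoverf.g.} the set
\[
\Sigma=\{\bfsigma\in\gal(K)^e\mid K_s(\bfsigma)/N\text{ is PAC for every subfield }N\text{ not algebraic over a finite field}\}
\]
has full Haar measure. Since $K$ is finitely generated and infinite, it is not separably closed, so $\gal(K)$ is infinite and the single point $(1,\dots,1)$ has measure zero; after removing it from $\Sigma$, I may further assume $M_{\bfsigma}:=K_s(\bfsigma)\ne K_s$. Because $K_s$ is a separable algebraic extension of each of its subfields, any algebraically closed subfield of $K_s$ must equal $K_s$, so this extra condition forces $M_{\bfsigma}$ to be not algebraically closed.

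Fix $\bfsigma\in\Sigma$, set $M=M_{\bfsigma}$, and suppose for contradiction that there is a proper subfield $N\subsetneq M$ with $M/N$ finite and separable. First I verify that $N$ is not algebraic over the prime field: in characteristic zero this is automatic, and in characteristic $p>0$ it follows because $M/N$ finite together with $N$ algebraic over $\bbF_p$ would force $M$ algebraic over $\bbF_p$, whereas $M\supseteq K$ has positive transcendence degree over $\bbF_p$. By the defining property of $\Sigma$ the extension $M/N$ is then a finite PAC extension, and Theorem~\ref{thm:finitePACext} leaves two options: (i) $N$ is PAC and $M/N$ is purely inseparable; or (ii) $N$ is real closed and $M$ is its algebraic closure. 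In case (i), $M/N$ is both separable and purely inseparable, giving $M=N$, contradicting $N\subsetneq M$. In case (ii), $M$ is algebraically closed, contradicting the choice $\bfsigma\in\Sigma$. Hence no such $N$ exists. The substantive work is already carried by Corollary~\ref{cor:PACoverf.g.} and Theorem~\ref{thm:finitePACext}; the only mildly delicate point in this deduction is the exclusion of $N$ being algebraic over $\bbF_p$, and I do not foresee a serious obstacle beyond it.
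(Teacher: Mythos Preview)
Your approach is essentially identical to the paper's: both restrict to finitely generated infinite $K$, invoke Corollary~\ref{cor:PACoverf.g.} to get that $K_s(\bfsigma)$ is PAC over every subfield not algebraic over a finite field, exclude $K_s(\bfsigma)=K_s$, and then apply Theorem~\ref{thm:finitePACext} to rule out a proper finite separable subextension. One small wording issue: the sentence ``any algebraically closed subfield of $K_s$ must equal $K_s$'' is false as stated (e.g.\ $\tilde\bbQ\subsetneq K_s$ when $K$ has positive transcendence degree over $\bbQ$), but what you actually need---that an algebraically closed $M_\bfsigma\supseteq K$ forces $M_\bfsigma=K_s$---is immediate since $K_s/K$ is algebraic.
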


In \cite{Haran1985} Haran proves an earlier version of this
conjecture, namely with the additional assumption that $K\subseteq
N$ (see also \cite{FriedJarden2005}, Theorem~18.7.7).

We settle Conjecture~\ref{conj:bottom} in the case $K$ is a finitely
generated infinite field (which is Hilbertian).

\begin{theorem}
Conjecture~\ref{conj:bottom} is true for a finitely generated
infinite field $K$.
\end{theorem}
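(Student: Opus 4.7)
The plan is to combine Theorem~\ref{thm:finitePACext} (the classification of finite PAC extensions) with Corollary~\ref{cor:PACoverf.g.}, which, for a finitely generated infinite $K$, produces a full-measure set of $\bfsigma\in \gal(K)^e$ for which $M := K_s(\bfsigma)$ is PAC over many subfields. First I would fix a set $\Sigma\subseteq \gal(K)^e$ of full Haar measure on which $M/N$ is PAC for every subfield $N\subseteq M$ not algebraic over a finite field; in characteristic $0$ this condition covers every subfield of $M$. I would also remove from $\Sigma$ the single tuple $(1,\ldots,1)$, which is negligible because $\gal(K)$ is infinite (as $K$ is Hilbertian). For $\bfsigma\in \Sigma$ the goal is then to rule out the existence of a proper subfield $N\subsetneq M$ with $M/N$ finite and separable.

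Next I would split according to the characteristic of $K$. In characteristic $p>0$, if $N$ is algebraic over $\bbF_p$ then so is $M$; but $K\subseteq M$ and $K$ is finitely generated and infinite, hence not algebraic over $\bbF_p$, a contradiction. Otherwise $M/N$ is PAC, so Theorem~\ref{thm:finitePACext} applies: case~(2) is impossible because positive characteristic forbids real closed fields, while case~(1) forces $M/N$ to be purely inseparable, which combined with the separability hypothesis yields $M=N$, contradicting the properness of $N$. In characteristic $0$ every subfield is automatically not algebraic over any finite field, so $M/N$ is PAC for every proper subfield $N$; case~(1) of Theorem~\ref{thm:finitePACext} gives $M=N$ at once (there are no nontrivial purely inseparable extensions in characteristic $0$), and case~(2) asserts that $N$ is real closed and $M=\tilde N$ is algebraically closed.

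The hard part will be ruling out this last subcase, i.e.\ the possibility that $M$ itself is algebraically closed. Here I would argue as follows: since $K\subseteq M\subseteq K_s = \tilde K$ and $M$ is algebraically closed, necessarily $M=K_s$, which forces the closed subgroup of $\gal(K)$ topologically generated by $\sigma_1,\ldots,\sigma_e$ to be trivial, i.e.\ $\bfsigma = (1,\ldots,1)$. Since this tuple was removed from $\Sigma$, the contradiction is complete and the theorem follows.
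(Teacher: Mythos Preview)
Your proof is correct and follows the same approach as the paper: invoke Corollary~\ref{cor:PACoverf.g.}, exclude the null set where $K_s(\bfsigma)=K_s$, and apply Theorem~\ref{thm:finitePACext} to force any finite PAC extension to be purely inseparable, hence trivial under the separability hypothesis. Your explicit case split by characteristic, your handling of subfields algebraic over $\bbF_p$, and your derivation of $\bfsigma=(1,\ldots,1)$ in the algebraically-closed subcase are more detailed than the paper's terse treatment, but the argument is substantively identical.
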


\begin{proof}
Let $e\geq 1$ be an integer. Corollary~\ref{cor:PACoverf.g.} gives for
almost all $\bfsigma \in \gal(K)^e$ that the field $K_s(\bfsigma)$
is a PAC extension of any subfield of it which is not algebraic over
a finite field. We can also assume that $K_s(\bfsigma)\neq K_s$.

Therefore, if $N\subsetneq K_s(\bfsigma)$ is finite, then
Theorem~\ref{thm:finitePACext} implies that $K_s(\bfsigma)/ N$ is
purely inseparable.
\end{proof}

\subsection{Fields with no Galois Subfields}
We strengthen Chatzidakis's result,
Theorem~\ref{thm:intChatzidakis}, for finitely generated infinite
fields.

\begin{theorem}
Let $K$ be a finitely generated infinite field and $e\geq 1$ an
integer. Then for almost all $\bfsigma \in \gal(K)^e$ the field
$K_s(\bfsigma)$ is a Galois extension of no proper subfield.
\end{theorem}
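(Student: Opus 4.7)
The plan is to adapt the proof of the bottom theorem just above, replacing the appeal to Theorem~\ref{thm:finitePACext} by Theorem~\ref{thm:intmineGalois}. Corollary~\ref{cor:PACoverf.g.} supplies a conull set of tuples $\bfsigma\in\gal(K)^e$ for which $K_s(\bfsigma)$ is PAC over every one of its subfields not algebraic over a finite field. Intersecting with the conull set on which $K_s(\bfsigma)\ne K_s$ (its complement forces every $\sigma_i$ to be the identity, a null set because $\gal(K)$ is infinite for any finitely generated infinite field), I obtain a measure-$1$ set of $\bfsigma$ with which to work.

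Now fix such a $\bfsigma$ and suppose, for contradiction, that $N\subsetneq K_s(\bfsigma)$ is a proper subfield over which $K_s(\bfsigma)$ is Galois. Since Galois extensions are separable and algebraic, $K_s(\bfsigma)/N$ is a proper separable algebraic extension, and I would now split into two cases according to whether or not $N$ is algebraic over a finite field. If $N$ is not algebraic over a finite field, the chosen property of $\bfsigma$ makes $K_s(\bfsigma)/N$ a proper Galois PAC extension, so Theorem~\ref{thm:intmineGalois} forces $K_s(\bfsigma)=N_s$; in particular $K_s(\bfsigma)$ is separably closed, and combined with $K_s(\bfsigma)\subseteq K_s$ this yields $K_s(\bfsigma)=K_s$, contradicting our choice of $\bfsigma$. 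If instead $N$ is algebraic over $\bbF_p$ (a case that arises only in characteristic $p>0$), then the algebraicity of $K_s(\bfsigma)/N$ makes $K_s(\bfsigma)$ itself algebraic over $\bbF_p$; but $K\subseteq K_s(\bfsigma)$ is a finitely generated infinite field and therefore has positive transcendence degree over $\bbF_p$, a contradiction.

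The argument is essentially a plug-in of the machinery already built in the paper, so I do not anticipate a serious obstacle. The only point worth a small sanity check is the measure-theoretic harmlessness of the assumption $K_s(\bfsigma)\ne K_s$, which follows immediately from the fact that $\gal(K)$ is infinite for every finitely generated infinite field; the case split on whether $N$ is algebraic over a finite field is automatic in characteristic zero and handled directly in positive characteristic as above.
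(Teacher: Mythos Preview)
Your proposal is correct and follows exactly the paper's own approach: invoke Corollary~\ref{cor:PACoverf.g.} to get the conull set on which $K_s(\bfsigma)$ is PAC over every subfield not algebraic over a finite field, throw away the null set where $K_s(\bfsigma)=K_s$, and then apply Theorem~\ref{thm:intmineGalois}. The only difference is that you spell out the case split on whether $N$ is algebraic over a finite field, whereas the paper leaves this implicit; your treatment of that case (forcing $K_s(\bfsigma)$ to be algebraic over $\bbF_p$, contradicting that it contains the infinite finitely generated field $K$) is correct and is indeed the intended reason the case is vacuous.
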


\begin{proof}
Corollary~\ref{cor:PACoverf.g.} gives for almost all $\bfsigma \in
\gal(K)^e$ that the field $K_s(\bfsigma)$ is a PAC extension of any
subfield of it which is not algebraic over a finite field. We can
also assume that $K_s(\bfsigma)\neq K_s$.

Therefore, Theorem~\ref{thm:intmineGalois} implies that
$K_s(\bfsigma)$ is a Galois extension of no proper subfield.
\end{proof}

\bibliographystyle{amsplain}

\begin{thebibliography}{10}
\bibitem{Bary-SorokerProjectivePairs}
L.~Bary-Soroker, \emph{Projective Pairs of Profinite Groups}, Journal of Group Theory, in press. 
\bibitem{Bary-SorokerPhD}
L.~Bary-Soroker, \emph{Pseudo Algebraically Closed Extensions of
Fields}, PhD thesis, Tel Aviv University (2008)

\bibitem{Bary-SorokerDirichlet}
L.~Bary-Soroker, \emph{Dirichlet's Theorem for Polynomial Rings},
Proc. Amer. Math. Soc. \textbf{137}, 73--83 (2009). 

\bibitem{Bary-SorokerCharacterization}
L.~Bary-Soroker, \emph{On the Characterization of Hilbertian
Fields}, Int. Math. Res. Not., Vol. 2008: article ID rnn089, 10 pages (2008).

\bibitem{Bary-SorokerJarden}
L.~Bary-Soroker and M.~Jarden, \emph{PAC Fields over Finitely
Generated Fields}, Math. Z. \textbf{260} (2), 329--334 (2008). 

\bibitem{Bary-SorokerKelmer}
L.~Bary-Soroker and D.~Kelmer, \emph{On PAC extensions and scaled
Trace forms}, Israel J. Math.
\bibitem{Debes}
P.~D\`ebes, \emph{Galois covers with prescribed fibers: the Beckmann-Black problem}, Ann. 
Scuola Norm. Sup. Pisa, Cl. Sci. \textbf{28} (1999), no. 4, 273Ð-286.

\bibitem{Engler}
A.J. Engler, \emph{Fields with two incomparable Henselian valuation rings}, manu- 
scripta mathematica \textbf{23} (1978), 373--385. 

\bibitem{Faltings83}
G.~Faltings, \emph{Endlichkeitss\"atze f\"ur abelsche Variet\"aten
\"uber Zahlk\"orpern}, Inv. Math. \textbf{73} (1983), no. 3,
349--366.

\bibitem{FriedJarden1986}
M.~D.~Fried and M.~Jarden, \emph{Field Arithmetic}, Ergebnisse der Mathematik (3) \textbf{11},
Springer-Verlag, Heidelberg, 1986.


\bibitem{FriedJarden2005}
M.~D.~Fried and M.~Jarden, \emph{Field Arithmetic}, second ed.,
revised and enlarged by M.~Jarden, Ergebnisse der Mathematik und
ihrer Grenzgebiete. 3. Folge. A Series of Modern Surveys in
Mathematics, vol.~11, Springer-Verlag, Berlin, 2005.

\bibitem{Haran1985}
D.~Haran, \emph{The Bottom Theorem}, Arch. Math. (Basel) \textbf{45}
(1985), no.~3, 229--231.


\bibitem{Haran1999InventMath}
D.~Haran, \emph{Hilbertian Fields under Separable Algebraic
Extensions}, Inv. Math. \textbf{137} (1999), 85--112.


\bibitem{Jarden2006}
M.~Jarden, \emph{PAC Fields over Number Fields}, Journal de Theorie
des Nombres de Bordeaux \textbf{18} (2006), no. 2, 371--377.


\bibitem{JardenRazon1994}
M.~Jarden and A.~Razon, \emph{Pseudo Algebraically Closed Fields
over Rings}, Israel J. Math. \textbf{86} (1994), no.~1-3, 25--59.

\bibitem{JardenRazon1998}
M.~Jarden and A.~Razon, \emph{Rumely's Local Global Principle for
Algebraic ${\rm P}{\mathcal S}{\rm C}$ Fields over Rings}, Trans.
Amer. Math. Soc. \textbf{350} (1998), no. 1, 55--85.

\bibitem{JardenRazon_ms}
M.~Jarden and A.~Razon, \emph{Rumely Local Global Principle for
Weakly P$\calS$C Fields over Holomorphic Domains}, manuscript, 2006.

\bibitem{Lang2002}
S.~Lang, \emph{Algebra}, Revised third edition, Graduate Texts in
Mathematics, \textbf{211}, Springer-Verlag, New York, 2002.

\bibitem{Matijasevich1993}
Y.~V.~Matiyasevich, \emph{Hilbert's tenth problem}, Translated
from the 1993 Russian original by the author. With a foreword by
Martin Davis. Foundations of Computing Series. MIT Press,
Cambridge, MA, 1993.



\bibitem{Pop1996}
F.~Pop, \emph{Embedding Problems over Large Fields}, Ann. of Math.
(2)  \textbf{144} (1996), no.~1, 1--34.

\bibitem{Razon1997}
A.~Razon, \emph{Abundance of Hilbertian Domains}, Manuscripta
Mathematica
  \textbf{94} (1997), 531--542.


\bibitem{Razon2000}
A.~Razon, \emph{Splittings of {$\tilde{\mathbb Q}/{\mathbb Q}$}},
Arch. Math.
  (Basel) \textbf{74} (2000), no.~4, 263--265.

\bibitem{Rumely1986} R.~Rumely, \emph{Arithmetic over the ring of
all algebraic integers}, J. Reine Angew. Math. \textbf{368}
(1986), 127--133.


\bibitem{Samuel66}
P.~Samuel, \emph{Lectures on Old and New Results on Algebraic
Curves}, Tata Institute of Fundamental Research, Bombay, 1966.

\end{thebibliography}

\end{document}